\documentclass[a4paper,11pt,reqno]{amsart}

\usepackage{times}
\usepackage{bbm}

\usepackage{geometry}
\usepackage{amsmath,amsfonts,amssymb}
\usepackage{mathtools}
\usepackage{array}

\usepackage{amsthm}
\usepackage{thmtools}

\usepackage[dvipsnames]{xcolor}

\usepackage{graphicx}
\usepackage{subfig}
\usepackage{caption}
\usepackage{tabularx}
\usepackage{float}
\usepackage{tikz}

\usepackage{enumitem}

\mathtoolsset{showonlyrefs=true}

\newtheorem{theorem}{Theorem}[section]
\newtheorem{proposition}[theorem]{Proposition}
\newtheorem{lemma}[theorem]{Lemma}
\newtheorem{corollary}[theorem]{Corollary}

\theoremstyle{definition}
\newtheorem{definition}[theorem]{Definition}

\newtheorem{remark}[theorem]{Remark}

\newlength{\assumptiontagwidth}
\makeatletter

\NewDocumentEnvironment{assumption}{m +b}
{%
  \par
  \addvspace{.5\baselineskip}%
  \noindent
  \phantomsection
  \def\@currentlabel{#1}%
  \parbox[c]{\assumptiontagwidth}{%
    \centering
    \mbox{\normalfont (#1)}%
  }%
  \hspace{1em}%
  \parbox[c]{\dimexpr\linewidth-\assumptiontagwidth-1em\relax}{%
    \slshape
    \setlength{\parindent}{0pt}%
    #2%
  }%
  \par
  \addvspace{.5\baselineskip}%
}
{}

\makeatother
\numberwithin{equation}{section}

\usepackage[square,numbers,sort&compress]{natbib}

\usepackage[
    colorlinks=true,
    linkcolor=blue,
    anchorcolor=black,
    citecolor=blue,
    filecolor=black,
    menucolor=black,
    runcolor=black,
    urlcolor=black
]{hyperref}

\newcommand{\e}{\mathrm e}
\newcommand{\I}{\mathrm i}

\newcommand{\cA}{\mathcal A}
\newcommand{\cB}{\mathcal B}
\newcommand{\cC}{\mathcal C}

\newcommand{\cE}{\mathcal E}
\newcommand{\cF}{\mathcal F}

\newcommand{\cJ}{\mathcal J}

\newcommand{\cL}{\mathcal L}

\newcommand{\cN}{\mathcal N}

\newcommand{\cQ}{\mathcal Q}

\newcommand{\cU}{\mathcal U}

\newcommand{\rmT}{\mathrm T}

\newcommand{\NN}{\mathbb{N}}
\newcommand{\ZZ}{\mathbb{Z}}
\newcommand{\R}{\mathbb{R}}

\DeclareMathOperator{\supp}{{\rm supp}}
\DeclareMathOperator{\sign}{{\rm sign}}

\newcommand{\eps}{\varepsilon}
\newcommand{\de}{\partial}

\newcommand{\dd}{\mathop{}\!\mathrm{d}}
\newcommand{\ddt}{\frac{\dd}{\dd t}}

\newcommand{\norm}[1]{\left\lVert #1 \right\rVert}

\makeatletter

\def\@tocline#1#2#3#4#5#6#7{\relax
  \ifnum #1>\c@tocdepth % then omit
  \else
    \par \addpenalty\@secpenalty\addvspace{#2}%
    \begingroup \hyphenpenalty\@M
    \@ifempty{#4}{%
      \@tempdima\csname r@tocindent\number#1\endcsname\relax
    }{%
      \@tempdima#4\relax
    }%
    \parindent\z@ \leftskip#3\relax \advance\leftskip\@tempdima\relax
    \rightskip\@pnumwidth plus4em \parfillskip-\@pnumwidth
    #5\leavevmode\hskip-\@tempdima
      \ifcase #1
       \or\or \hskip 1em \or \hskip 2em \else \hskip 3em \fi%
      #6\nobreak\relax
    \dotfill\hbox to\@pnumwidth{\@tocpagenum{#7}}\par
    \nobreak
    \endgroup
  \fi}
\makeatother

\makeatletter

\def\@settitle{%
  \begin{center}%
    \baselineskip14\p@\relax
    \large
    \bfseries
    \uppercasenonmath\@title
    \@title
  \end{center}%
}

\def\@setauthors{%
  \begingroup
  \def\thanks{\protect\thanks@warning}%
  \trivlist
  \centering
  \normalsize
  \@topsep30\p@\relax
  \advance\@topsep by -\baselineskip
  \item\relax
  \author@andify\authors
  \def\\{\protect\linebreak}%
  \MakeUppercase{\authors}%
  \ifx\@emptyOntribs
  \else
    ,\penalty-3 \space \@setcontribs
    \@closetoccontribs
  \fi
  \endtrivlist
  \endgroup
}

\def\section{\@startsection{section}{1}%
  \z@
  {.5\linespacing\@plus\linespacing}
  {.5\linespacing}
  {\normalfont\centering\scshape\large}}

\def\subsection{\@startsection{subsection}{2}%
  \z@
  {.5\baselineskip\@plus.2\baselineskip\@minus.1\baselineskip}
  {-0.5em}
  {\normalfont\scshape}}
  
\def\subsubsection{\@startsection{subsubsection}{3}%
  \z@
  {.4\baselineskip\@plus.2\baselineskip\@minus.1\baselineskip}
  {-0.5em}
  {\normalfont\itshape}}

\def\paragraph{\@startsection{paragraph}{4}%
  \parindent
  {.4\baselineskip\@plus.2\baselineskip\@minus.1\baselineskip}
  {-0.5em}
  {\normalfont\itshape}}

\makeatother

\title[2D3C rotating stratified Boussinesq]{Extended stability timescales for the 2D3C \\rotating stratified Boussinesq equations}
\author{Augusto Del Zotto}
\address{Institute of Mathematics, University of Zurich}
\email{augusto.delzotto@math.uzh.ch}
\author{Luca Melzi}
\address{Department of Mathematics, Imperial College London}
\email{l.melzi24@imperial.ac.uk}
\author{Haoling Xiang}
\address{Department of Mathematics, Imperial College London}
\email{h.xiang23@imperial.ac.uk}
\date{\today}

\subjclass[2020]{
35Q35, % PDEs in connection with fluid mechanics
76E20, % Stability and instability of geophysical and astrophysical flows
76B70, % Stratification effects in inviscid fluids
76E07, % Rotation in hydrodynamic stability
}
\keywords{Boussinesq equations, rotating stratified fluids, dispersion, nonlinear stability, normal forms}

\begin{document}

\begin{abstract}
We study the time evolution of a vertical-slice model for incompressible inhomogeneous rotating fluids perturbed around a stably stratified rest state with linear buoyancy profile.
For $\eps$-small localized initial data with vanishing potential vorticity, we prove that solutions remain of size $O(\eps)$ up to times of order $\eps^{-\frac43}$, and up to times of order $\eps^{-2}$ in a regime where the Brunt--V\"ais\"al\"a frequency and the Coriolis parameter are comparable.
The first result relies on linear dispersion with amplitude decay rate $\langle t\rangle^{-\frac12}$, while the second exploits the absence of three-wave resonances via a normal form transformation.
Our results remain valid for small nonzero initial potential vorticity, whose size is propagated by means of an intrinsic change of coordinates.
\end{abstract}

\maketitle

\tableofcontents

\section{Introduction and main results}
The dynamics of atmospheric fronts and coastal oceanic currents is shaped by the combined action of the slow variation in one horizontal direction and of two restoring mechanisms, namely the Coriolis and the buoyancy forces \cite{Pedlosky1987,HoskinsBretherton1972,CotHol2013}.
In the ocean interior, the same two mechanisms sustain inertia-gravity waves whose propagation and nonlinear interactions play a central role in the energy budget and interior mixing of the ocean \cite{MunkWunsch1998}.

The model considered in these pages, which we refer to as the two-dimensional, three-component (2D3C) rotating stratified Boussinesq system, is the vertical slice model introduced in \cite{XieBuhler2019,ShavitBuhlerShatah2026} to study inertia-gravity wave turbulence.
It is obtained from the three-dimensional rotating Boussinesq system, written for perturbations of a linearly stratified fluid at rest, by assuming that all unknowns are independent of $x_2$, the second horizontal coordinate.\footnote{Since the pressure is then also independent of $x_2$, the set of $x_2$-independent solutions is an invariant subspace of the three-dimensional dynamics.}
Explicitly, it reads
\begin{equation}\label{eq:2D3C}
\begin{cases}
    \de_t u+(u\cdot\nabla) u+fe_3\times u+\nabla p=be_3,\\
    \de_t b+u\cdot\nabla b+N^2u_3=0,\\
    \nabla\cdot u=0,
\end{cases}
\tag{2D3C}
\end{equation}
where all unknowns depend only on $(t,x_1,x_3)\in \R^3$ and $\nabla=(\de_1,0,\de_3)$ (with a slight abuse of notation, we use the same symbol for $\nabla=(\de_1,\de_3)$).
Here $e_3$ is the upward unit vector, $u=(u_1,u_2,u_3)$ the velocity field, $p(t,x_1,x_3)$ the pressure, $N^2x_3+b(t,x_1,x_3)$ the total buoyancy (hence $b$ is the buoyancy perturbation), $f\neq0$ the Coriolis parameter and $N\neq0$ the Brunt--V\"ais\"al\"a frequency, both assumed constant.

In this work we investigate the long-time stability of the rest state of \eqref{eq:2D3C}, exploiting the dispersive effects that characterize rotating and stratified fluids \cite{CheminDesjardinsGallagherGrenier2006, Majda2003}.
These are appreciated in the model under scrutiny upon linearizing it and observing that the dynamics splits into one zero-frequency mode, representing the linear potential vorticity of the system, and two inertia-gravity wave modes.
On one hand, the two wave modes carry the dispersive effects and, when $|N|\neq|f|$, disperse with frequencies $\pm\Omega$, where
\begin{equation} \label{eq:dispersion_relation_intro}
    \Omega(\xi)=\sqrt{\frac{N^2\xi_1^2+f^2\xi_3^2}{\xi_1^2+\xi_3^2}},
\end{equation}
along the horizontal and vertical frequencies $\xi_1$ and $\xi_3$.
On the other hand, the zero-frequency mode is invariant under the linear dynamics. 
This steers the nonlinear analysis towards a natural well-prepared initial data condition obtained by imposing a vanishing potential vorticity constraint, the nonlinear counterpart of what assumed in \cite[Section~2.3]{ShavitBuhlerShatah2026}.

Our main results show that, for well-prepared initial data and under suitable conditions on $f$ and $N$, the stability timescale of  solutions of \eqref{eq:2D3C} can be extended beyond the classical local well-posedness $O(\eps^{-1})$ existence time for $\eps$-small initial data \cite{MajdaBertozzi2001}.

\begin{theorem} \label{thm:43}
    Fix $N,f\in \R\setminus\{0\}$ such that $N^2\neq f^2$ and $s>4$.
    Then, there exist constants $\eps_0>0$, $c_0>0$, and $\Lambda_{N,f}>0$, depending only on $N$, $f$, and $s$, such that the following holds.
    If the initial datum $(u(0),b(0))$ is divergence free, well-prepared in the sense of \eqref{ass:wp}, and satisfies
    \begin{equation}\label{eq:thm1_assumption_ic_0}
        \norm{(u(0),|N|^{-1}b(0))}_{H^{s}\cap \dot B^{3}_{1,1}}=\eps\leq \eps_0,
    \end{equation}
    then, for $T_\eps:= c_0 \Lambda_{N,f}  \eps^{-\frac43}$, the unique  solution of \eqref{eq:2D3C} satisfies $(u,b)\in \cC([0,T_\eps];H^s(\R^2)^4)$ with
    \begin{equation}\label{eq:extended_stability}
         \sup_{t\in[0,T_\eps]}\norm{(u(t),|N|^{-1}b(t))}_{H^s}  \leq \sqrt{\frac32} \eps.
    \end{equation}
\end{theorem}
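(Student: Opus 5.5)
The argument will be a bootstrap that combines the $H^s$ energy estimate with linear dispersive decay of the two inertia-gravity wave modes; the vanishing potential vorticity condition \eqref{ass:wp} is used to eliminate the non-dispersive mode. The first step is to reformulate \eqref{eq:2D3C}. Since the flow is divergence free and depends only on $(x_1,x_3)$, we can write $(u_1,u_3)=\nabla^\perp\psi$ and use the unknowns $(\psi,u_2,b)$, or equivalently the vorticity $\omega=\de_3u_1-\de_1u_3$ together with $u_2$ and $b$. We then diagonalize the linear part. This gives one zero-frequency mode, the linear potential vorticity $q\sim N^2\de_3 u_2\cdot f^{-1}$-type combination $f\de_3 u_2+\de_1 b$ up to normalization, and two dispersive modes $U_\pm$ with symbols $\pm i\Omega(\xi)$ as in \eqref{eq:dispersion_relation_intro}. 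The nonlinear potential vorticity $(\omega+ f e_3)\cdot\nabla(N^2x_3+b)$, projected to the slice, is transported by the flow. Consequently, if it vanishes initially, it vanishes for all times. The linear PV $q$ is then a quadratic expression in the wave variables, so $\|q\|$ is controlled by $\|U\|^2$ and carries no linear $O(\eps)$ non-decaying part.

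Next we establish the energy estimate. Commuting $\langle\nabla\rangle^s$ with the transport, and using that the Coriolis and buoyancy terms are skew-adjoint in the weighted norm $\|(u,|N|^{-1}b)\|$, the Kato--Ponce commutator estimate gives
\[
\frac{\dd}{\dd t}\norm{(u,|N|^{-1}b)}_{H^s}^2\lesssim \norm{\nabla u}_{L^\infty}\norm{(u,|N|^{-1}b)}_{H^s}^2 .
\]
Here we must also control $\nabla b$ in $L^\infty$, coming from the commutator with $u\cdot\nabla b$. Integrating, the bootstrap bound $\sqrt{3/2}\,\eps$ closes on $[0,T]$ provided $\int_0^T\|(\nabla u,\nabla b)\|_{L^\infty}\,\dd t\le c$ for a small absolute $c$, namely $\log(3/2)$ suitably normalized.

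The key step, and the main obstacle, is the dispersive estimate. Because $N^2\ne f^2$, the phase $\Omega$ is homogeneous of degree $0$. Its Hessian restricted to the unit circle is degenerate only at finitely many angles: $\Omega$ is radially constant, and angularly it has critical points at the axes plus inflection points. A stationary phase analysis on each Littlewood--Paley annulus therefore gives the decay estimate
\[
\|e^{\pm it\Omega(D)}P_k g\|_{L^\infty}\lesssim \Lambda_{N,f}^{-1/2}\, t^{-1/2}\,2^{2k}\|P_k g\|_{L^1}.
\]
This is only $t^{-1/2}$ and not $t^{-1}$, because of the radial degeneracy. Summing over $k$ with one extra derivative explains the $\dot B^{3}_{1,1}$ norm of the data. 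Writing $U(t)=e^{it\Omega}U(0)+\text{Duhamel}$, the linear part contributes $\|\nabla U^{\rm lin}\|_{L^\infty}\lesssim \eps\langle \Lambda t\rangle^{-1/2}$. For the Duhamel part we will not propagate a Besov norm. Instead we estimate it crudely by Sobolev embedding: its $H^{s}$ norm is at most $\int_0^t\|N(U)\|_{H^{s}}\lesssim t\eps^2$, since $s>4$ makes $\nabla$ of it bounded in $L^\infty$. The PV mode, being quadratic, is bounded by $\eps^2$ in $L^\infty$ as well. Hence
\[
\int_0^T\|\nabla(u,b)\|_{L^\infty}\lesssim \eps\,\Lambda^{-1/2}T^{1/2}+\eps^2T^2 .
\]
Both terms are small exactly when $T\lesssim \eps^{-1}$ for the first and $T\ll \eps^{-1}$ for the second, which only reaches $\eps^{-1}$. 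To reach $\eps^{-4/3}$ the Duhamel part must be treated more carefully. We apply the dispersive estimate inside the Duhamel integral, bounding the $L^1$-type norm of the quadratic nonlinearity by $\|U\|_{H^s}^2\lesssim\eps^2$ via Hölder, which requires the $\dot B^3_{1,1}$-type $L^1$ control to be expressed through $L^2\times L^2$ products. This yields
\[
\|\nabla U^{\rm Duh}(t)\|_{L^\infty}\lesssim \eps^2\int_0^t (t-\tau)^{-1/2}\,\dd\tau\sim\eps^2 t^{1/2},
\]
and hence a time-integral of $\eps^2T^{3/2}$. Combined with the linear term $\eps T^{1/2}$, the requirement $\eps^2T^{3/2}\lesssim c$ gives precisely $T\sim c_0\Lambda_{N,f}\eps^{-4/3}$, while the linear term is then $\eps^{1/3}\ll1$. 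The delicate points are verifying the uniform angular stationary phase bound with explicit dependence on $N,f$ (which defines $\Lambda_{N,f}$), and showing that the PV remainder, being quadratic, does not spoil the $L^1$ bounds. With these in hand, the continuity argument yields \eqref{eq:extended_stability} on $[0,T_\eps]$.
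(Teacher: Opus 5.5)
Your proposal is correct and follows the paper's route: an $H^s$ energy bootstrap controlled by $\int_0^T D(t)\dd t$, $q\equiv0$ by transport of the nonlinear potential vorticity (so only the quadratic term $J(a,b)$ remains besides the wave modes $z_\pm$), the $2^{2j}\langle t\rangle^{-\frac12}$ dispersive bound giving $\dot B^3_{1,1}\to\dot B^1_{\infty,1}$, and Duhamel with the $H^s\times H^s\to\dot B^k_{1,1}$ product estimate, yielding $\eps T^{\frac12}+\eps^2T^{\frac32}$. For the dispersive bound, the paper makes your angular stationary-phase sketch uniform in $x$ by absorbing $\e^{\I x\cdot\xi}$ into the angular amplitude and applying van der Corput with $|\de_\theta\Omega|^2+|\de_\theta^2\Omega|^2\ge(|N|-|f|)^2$; two slips also need fixing: the linear potential vorticity is $f(N^2\de_1u_2+f\de_3b)$, not $f\de_3u_2+\de_1b$, and the gradient of the PV part of $(a,b)$ is a zeroth-order multiplier of $J(a,b)$, so it must be bounded in $\dot B^0_{\infty,1}$ rather than $L^\infty$.
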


Additionally, when the Coriolis parameter and the Brunt--V\"ais\"al\"a frequency are quantitatively comparable, the stability timescale of solutions can be extended further.

\begin{theorem} \label{thm:2}
    Let $s>5$ and assume, in addition to the hypotheses of Theorem~\ref{thm:43}, that
    \begin{equation}\label{eq:nonresonant_regime}
        \max\{|N|,|f|\}<2\min\{|N|,|f|\}.
    \end{equation}
    Then, there exist constants $\tilde \eps_0>0$, $\tilde c_0>0$, and $\tilde \Lambda_{N,f}>0$, depending only on $N$, $f$, and $s$, such that, if the initial datum is divergence free, well-prepared in the sense of \eqref{ass:wp}, and satisfies \eqref{eq:thm1_assumption_ic_0} with $\eps\leq \tilde\eps_0$, the unique solution $(u(t),b(t))$ to \eqref{eq:2D3C} satisfies \eqref{eq:extended_stability} with
    \begin{equation}\label{eq:thm2span}
         T_\eps := \tilde c_0\tilde \Lambda_{N,f}  \eps^{-2}.
    \end{equation}
\end{theorem}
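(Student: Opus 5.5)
We prove Theorem~\ref{thm:2} with a normal form transformation applied to the dispersive unknowns, followed by an energy estimate for the transformed energy. The setup is that of Theorem~\ref{thm:43}. Writing the system in terms of the inertia-gravity wave variables $U_\pm$ (eigenmodes of the linear operator with frequencies $\pm\Omega(\xi)$), the well-prepared condition \eqref{ass:wp} removes the zero mode, or keeps it small. The equation then becomes
\[
\de_t U_\pm \mp \I\Omega(D)U_\pm=\sum_{\mu,\nu}Q_{\pm\mu\nu}(U_\mu,U_\nu),
\]
with quadratic symbols $q(\xi,\eta)$. The key observation is that under \eqref{eq:nonresonant_regime} the phase
\[
\Phi_{\pm\mu\nu}(\xi,\eta)=\pm\Omega(\xi)-\mu\Omega(\xi-\eta)-\nu\Omega(\eta)
\]
never vanishes. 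Indeed, $\Omega$ takes values in $[\min\{|N|,|f|\},\max\{|N|,|f|\}]$. For sign patterns with two equal signs, one frequency must equal the sum of the other two, which is at least $2\min\{|N|,|f|\}>\max\{|N|,|f|\}$. For sign patterns with all signs equal, $|\Phi|\geq 3\min\{|N|,|f|\}$. Hence
\[
|\Phi|\geq \delta_{N,f}:=2\min\{|N|,|f|\}-\max\{|N|,|f|\}>0
\]
uniformly. This lower bound yields the constant $\tilde\Lambda_{N,f}$.

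The steps are as follows.
\begin{enumerate}[label=(\roman*)]
\item Rather than a full normal form change of variables, which loses derivatives in the quasilinear setting, we use a modified energy. Start from the $H^s$ energy $E_s(t)$ of $(u,|N|^{-1}b)$, which satisfies $\ddt E_s=\mathcal C_s(U,U,U)$, a cubic term in which, after commutator and symmetrization arguments, the top-order derivative loss cancels. Add a cubic correction
\[
E^{(3)}=\sum \int \frac{m(\xi,\eta)}{\I\Phi(\xi,\eta)}\,\widehat U\,\widehat U\,\widehat U ,
\]
chosen so that $\ddt(E_s+E^{(3)})$ is quartic.
\item Bound the corrections. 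Since $\Phi^{-1}$ is bounded, and is smooth away from the origin (as $\Omega$ is zero-homogeneous), we get $|E^{(3)}|\lesssim \delta_{N,f}^{-1}\norm{U}_{H^s}^3$. This makes $E_s+E^{(3)}$ comparable to $E_s$ for small $\eps$.
\item The quartic remainder comes from substituting the equation into $\ddt E^{(3)}$. We bound it by $C\delta_{N,f}^{-1}\norm{U}_{H^s}^4$, using $s>5$ to absorb the extra derivative carried by the nonlinearity $u\cdot\nabla$ inside the symbol.
\item Bootstrap: integrate to obtain $E_s(t)\leq E_s(0)+C\eps^3+C\delta^{-1}t\,\eps^4$. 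This closes with the bound $\tfrac32\eps^2$ for $t\lesssim \delta\eps^{-2}$.
\end{enumerate}
If the potential vorticity is nonzero but small, it is propagated using the change of coordinates mentioned in the abstract; this contributes only additional terms of the same order.

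The main obstacle is the derivative loss in step~(iii). The multiplier $m/\Phi$ is only homogeneous of degree zero and is singular at $\xi=0$, $\eta=0$ and $\xi=\eta$, so Coifman--Meyer type estimates require care. When $\ddt$ falls on the highest-frequency factor, the transport structure must be used to recover a derivative. I would try a paraproduct decomposition, applying the normal form only to the low-high interactions with a symmetrized symbol whose leading part vanishes on the diagonal. The low-frequency singularities would be controlled by the $\dot B^3_{1,1}$ assumption, propagated together with $H^s$.
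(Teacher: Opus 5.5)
Your non-resonance computation is correct and is exactly Lemma~\ref{lemma:alg_Phi}. The step that turns it into an $\eps^{-2}$ bound, however, has a genuine gap in (ii)--(iii).

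In a modified energy built at the top level $H^s$, the quartic remainder comes from letting $\de_t$ fall on one of the two high-frequency factors of $E^{(3)}$. Substituting the transport nonlinearity $u\cdot\nabla$ there puts $s+1$ derivatives on that factor. This loss is measured relative to the energy level itself, so assuming $s>5$ cannot absorb it: raising $s$ only moves the loss up.

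Step (ii) has a related problem. The bound $|E^{(3)}|\lesssim\norm{U}_{H^s}^3$ needs the commutator cancellation of $\ddt E_s$ to survive division by $\Phi$. But $\Phi$ contains $\Omega(\xi-\eta)$ and $\Omega(\eta)$, which are zero-homogeneous and hence non-smooth in the low input frequency, so $m/\Phi$ is not a Coifman--Meyer symbol.

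Your final paragraph concedes that this is the main obstacle. It only sketches a paraproduct/symmetrization fix, plus a propagation of the $L^1$-based $\dot B^3_{1,1}$ norm, which no energy estimate provides. As it stands, the proposal does not prove the theorem.

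The missing idea is never to perform a normal form at top order. The paper controls $H^s$ by the Beale--Kato--Majda type bound \eqref{eq:blow-up_Uab}. The bootstrap then reduces to $\int_0^T D(t)\dd t\ll 1$, which by Proposition~\ref{prop:D_bound_general} and $q\equiv0$ involves only $\norm{z_\pm}_{\dot B^1_{\infty,1}}$ and $\norm{J(a,b)}_{\dot B^0_{\infty,1}}$.

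The normal form is applied to the Duhamel formula for $z_\sigma$ (Lemma~\ref{lemma:expression_for_z_sigma}) in these low norms, where derivative losses are harmless:
\begin{itemize}
\item Via $\dot B^3_{1,1}\hookrightarrow\dot B^1_{\infty,1}$, the boundary terms are bounded by $\norm{z}_{H^{s-1}}^2\lesssim\eps^2$.
\item The cubic terms are bounded by $\norm{\cN\cL}_{H^{s-1}}\norm{z}_{H^{s-1}}\lesssim\eps^3$; this is where $s>5$ is used.
\item The pseudo-products are bounded through $\Phi^{-1}=\kappa\int_0^\infty\e^{-\kappa\Phi\rho}\dd\rho$, which factorizes the symbol into single-variable multipliers and bypasses Coifman--Meyer theory (Lemma~\ref{lemma:T}).
\end{itemize}

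The price is that dispersion becomes essential. The cubic Duhamel terms contribute $\int_0^T\int_0^t\langle t-\tau\rangle^{-1/2}\eps^3\dd\tau\dd t\sim T^{3/2}\eps^3$ to $\int_0^T D\,\dd t$. This is small at $T=\tilde c_0\tilde\Lambda_{N,f}\eps^{-2}$ only thanks to the decay; without it one gets $T^2\eps^3\sim\eps^{-1}$. The $\dot B^3_{1,1}$ norm is used only at $t=0$ and is never propagated.

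Your route, if completed, would avoid dispersion and localization altogether. Completing it, though, is precisely the quasilinear difficulty you left open.
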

Theorems~\ref{thm:43} and~\ref{thm:2} are the rotating counterpart of the $\eps^{-\frac43}$ and $\eps^{-2}$ timescales obtained in \cite{ElgWid2015} and \cite{JurjaWidmayer2026}, respectively, for the two-dimensional Boussinesq system without rotation. Note that the localization assumptions of \cite{JurjaWidmayer2026} differ from ours.
In three dimensions and without stratification, the analogous fixed-rotation problem for the Euler--Coriolis equations has been studied in \cite{GuoHuangPausaderWidmayer2023}, where sufficiently small axisymmetric data are shown to have lifespans of at least $\eps^{-M}$ for every $M>0$, followed by global axisymmetric solutions \cite{GuoPauWid2023} and by global solutions for suitably localized data without axisymmetry \cite{RenTian2024}.

To the best of the authors' knowledge, Theorems~\ref{thm:43} and~\ref{thm:2} are the first quantitative extensions of the stability timescales for  solutions of \eqref{eq:2D3C}.

The remainder of the section collects some comments about the statements of our results, discusses how they relate to previous work, and presents the main ideas of the proofs.

\subsection{About the statements} \label{subsec:about_the_theorems}
We start by breaking down the content of Theorem~\ref{thm:43} and Theorem~\ref{thm:2}, analyzing and motivating the presence of important parts and drawing parallels with the existing results.

\paragraph{Well-prepared data and potential vorticity.} 
A standard intrinsic quantity of \eqref{eq:2D3C}, and more generally of rotating and stratified flows, is the potential vorticity \cite{Ertel1942}.\footnote{See \cite{SchubertEtAl2004} for an English translation of Ertel's papers on potential vorticity.}
Physically, it measures the component of the absolute vorticity $\varpi+fe_3$ (where $\varpi=\nabla\times u$) normal to the surfaces of constant total buoyancy $N^2x_3+b$, and thus couples the two restoring mechanisms of the system: rotation enters through the vorticity, stratification through the buoyancy gradient \cite{HoskinsMcIntyreRobertson1985}.
In the present setting, its deviation from the rest-state value $fN^2$, multiplied by $f$, reads
\begin{equation}\label{eq:potential_vorticity}
    q:=N^2f\de_1u_2+f^2\de_3b+fJ(u_2,b),
\end{equation}
where $J(u_2,b):=\de_1u_2\de_3b-\de_3u_2\de_1b$.
Our fundamental assumption is the following.
\begin{assumption}{wp}\label{ass:wp}
    We say that the initial datum $(u(0),b(0))$ is \emph{well-prepared} if the initial potential vorticity satisfies $q(0)=0$.
\end{assumption}
Crucially, by Ertel's theorem, the potential vorticity is transported by the flow,\footnote{Recall that $u\cdot \nabla=u_1\de_1+u_3\de_3$ since all functions are $x_2$-independent.}
\begin{equation}\label{eq:transport_q}
    (\de_t+u_1\de_1+u_3\de_3)q=0,
\end{equation}
hence a vanishing initial vorticity remains zero for all times.
\begin{remark} \label{rmk:gamma_well_prepared}
    As shown in Section~\ref{subsec:PV}, the transport structure of \eqref{eq:transport_q} allows one to propagate any smallness condition imposed on $q$ for as long as the two quantities $(u_2,b)$ remain small enough.
    Our main results are also valid under the following relaxed well-preparedness assumption.
    \begin{assumption}{$\gamma$-wp}\label{ass:gamma_wp}
        Fix $0<\eps<1$ and $\gamma>0$. We say that the initial datum $(u(0),b(0))$ is \textit{$\gamma$-well-prepared} if the initial potential vorticity satisfies $\norm{q(0)}_{H^{s-1}}\leq \eps^\gamma$, where $s$ is the regularity given in Theorems~\ref{thm:43} and~\ref{thm:2}.
    \end{assumption}
    It is clear that, if the initial datum is $\gamma$-well-prepared for all $\gamma>0$, then $q(0)=0$.
    Moreover, the Sobolev regularity $s-1$ reflects the fact that the potential vorticity is, by definition, one derivative less regular than $u$ and $b$ (cf. \eqref{eq:potential_vorticity}).
    Regarding the choice of $\gamma$, to recover the $O(\eps^{-\frac43})$ or $O(\eps^{-2})$ timescales it suffices to assume $\gamma\geq\frac43$ or $\gamma\geq 2$, respectively.
\end{remark}

\paragraph{Parameter dependence.}

Our main results concern fixed values of $N$ and $f$, and are of a different nature from the ones for strong rotation or stratification regimes.
To connect Theorems~\ref{thm:43} and~\ref{thm:2} with the broader literature, the constant $\Lambda_{N,f}$ can be explicitly computed in terms of the parameters $N,f$, and it reads 
\begin{equation} \label{eq:Lambda_Nf}
    \Lambda_{N,f}=c_s\big||N|-|f|\big|^{\frac13}\left(\frac{\min\{|N|,|f|\}}{\max\{|N|,|f|\}}\right)^4,
\end{equation}
where $c_s>0$ is a constant depending only on the Sobolev regularity $s$.
We refer the reader to Remark~\ref{rmk:Lambda_receipt} for details on how the constant is tracked.
Once a joint scaling of $N$ and $f$ is fixed, we see that \eqref{eq:Lambda_Nf} allows one to determine, for any given time $T>0$, values of the parameters for which the solution is $\varepsilon$-small on the time horizon $[0,T]$.
\begin{remark}
    The ratio term in \eqref{eq:Lambda_Nf} appears to be an artifact of our method. 
    We expect that a different approach could improve the constant $\Lambda_{N,f}$ by removing this factor, which would allow to take the limits $N\to\infty$ and $f\to\infty$ independently.
\end{remark}
In the fixed-ratio regime, a direct comparison can be drawn with results from the three-dimensional rotating and stratified Boussinesq equations obtained in \cite{EmbidMajda1996,BabinEtAl1997,BabinEtAl1999} through fast-wave averaging and singular limits.
For the Euler--Coriolis equations, lifespan estimates at large rotation speed are obtained in \cite{Dutrifoy2005,KohLeeTakada2014,Takada2016}, while for the three-dimensional stratified Boussinesq equations without rotation, existence on any prescribed time interval for sufficiently large $N$, together with the strongly stratified limit, is proven in \cite{Takada2019}.
In the two-dimensional stratified case, which is the closest to our setting, long-time existence for large $N$ is obtained in \cite{Takada2021,AnguloFerreiraKosloff2024}.

Let us briefly comment on the physical relevance of the regimes covered by the analysis in Theorems~\ref{thm:43} and~\ref{thm:2}.
In this regard, we mention that the typical oceanic regime has $|N/f|\ge40$ (see \cite{ShavitBuhlerShatah2026}), whereas in the deep ocean stratification is much weaker, so that $|N/f|\sim 1$ and the two effects are of comparable strength  (see \cite{vanMil2006}).

Finally, we remark that the constant $\tilde \Lambda_{N,f}$ appearing in Theorem~\ref{thm:2} could also be tracked explicitly, but we do not pursue this here.

\paragraph{Regularity and initial data localization.} 
In addition to the well-preparedness constraint, we assume $(u(0),b(0))\in H^s\cap \dot B^3_{1,1}(\R^2)$. 
This choice is classical and is motivated as follows. 
\begin{enumerate}[label=-]
    \item The Sobolev space $H^s$ is the natural energy space in which stability problems are analyzed.
    The requirement $s>4$ (resp. $s>5$) is dictated by the loss of two derivatives in the dispersive estimate, see \eqref{eq:general_dispersive_estimate}, and by the loss of one derivative in each application of the nonlinearity $u\cdot\nabla$, which accumulates over two (resp. three) iterations. 
    \item The homogeneous Besov space $\dot B^3_{1,1}$ is a natural space in which to capture dispersive effects.
    The initial data are assumed to be localized in an $L^1$-based sense, which is what a linear $L^1-L^\infty$ dispersive estimate requires, and the derivatives lost when obtaining the corresponding time decay of the linear propagator are reflected in the regularity index.
\end{enumerate}
Localization assumptions of this type go back to \cite{ElgWid2015,Widmayer2018}, where initial data are assumed in $W^{s,1}$.

\begin{remark}
    Theorem~\ref{thm:43} admits a further relaxation of its hypotheses, in the spirit of \cite{Wan2020,JujKo2025}: the Besov localization of the initial datum can be dropped, leading to a purely $H^s$-based statement.
    Indeed, the dispersive estimate established later in Proposition~\ref{prop:general_dispersive_estimate} yields, through a standard $TT^*$ argument, Strichartz estimates analogous to \cite[Theorem 3.4]{JujKo2025}, which control the linear evolution of the initial datum in terms of its $H^s$ norm alone.
    This approach is still sufficient for reaching the timescale of Theorem~\ref{thm:43}, but not for the one of Theorem~\ref{thm:2}.
\end{remark}

\subsection{About the proofs}\label{sec:about_proofs}
Translating dispersion into quantitative stability estimates is a recurring theme in the analysis of systems where stratification or rotation are present.
We refer the reader to \cite{CheminDesjardinsGallagherGrenier2006,Majda2003} for a general discussion, to \cite{ElgWid2015,JurjaWidmayer2026,JujKo2025} for the two-dimensional stratified case, and to \cite{KohLeeTakada2014,GuoHuangPausaderWidmayer2023,GuoPauWid2023} for rotating flows.
Our proofs combine ideas from \cite{ElgWid2015} for the stability timescale of Theorem~\ref{thm:43}, while for Theorem~\ref{thm:2} we resort to the normal form method introduced in \cite{Shatah1985}, a general discussion can be found in \cite{Ger10}.

In what follows, by writing $A\lesssim B$ we make implicit the existence of a constant $C>0$, depending only on the data of the problem, such that $A\leq CB$.
Moreover, by $A\sim B$ we mean that both $A\lesssim B$ and $B\lesssim A$.
We may write $\lesssim_{\eta}$ or $\sim_{\eta}$ to specify that the hidden constant depends on a specific parameter $\eta$, and the same convention applies to $O_\eta(\cdot)$.

To pinpoint better where our proofs depart from the standard theory, we summarize briefly the main line of work followed in the manuscript.
Both proofs combine an $H^s$ energy bootstrap with a blow-up criterion of Beale--Kato--Majda type, which together reduce the problem to controlling the time integral of 
\begin{equation}
    \norm{\nabla u(t)}_{L^\infty}+|N|^{-1}\norm{\nabla b(t)}_{L^\infty}.
\end{equation}
To estimate this quantity, we study the system in the variables $(q,z_+,z_-)$, namely the potential vorticity \eqref{eq:potential_vorticity} and the two wave variables arising from the diagonalization of the linearized system. 
The well-preparedness assumption \eqref{ass:wp} removes the dynamics of $q$, and the analysis reduces to the evolution of the two wave variables $z_\pm$, which satisfy
\begin{equation}
    \de_tz_\pm\mp\I\Omega(D)z_\pm = \text{nonlinear terms}.
\end{equation}
For $N^2\neq f^2$, the linear semigroups   $\e^{\pm\I\Omega(D)t}$ exhibit the sharp $\langle t\rangle^{-\frac12}$ dispersive decay.
This is proved by means of a Littlewood--Paley decomposition and the van der Corput lemma in polar coordinates, in the spirit of \cite{GuoPauWid2023,JurjaWidmayer2026,GuoHuangPausaderWidmayer2023}.

The two proofs diverge at this point.
Theorem~\ref{thm:43} is obtained by applying the dispersive estimate to $O(\eps)$ initial data and $O(\eps^2)$ nonlinear terms, heuristically obtaining
\begin{equation}\label{eq:heurist43}
    \norm{\nabla u(t)}_{L^\infty}+|N|^{-1}\norm{\nabla b(t)}_{L^\infty}\lesssim_{N,f} \langle t\rangle^{-\frac12}\eps+\langle t\rangle^{\frac12}\eps^2,
\end{equation}
whose time integral is of order one up to time $T_\eps\sim_{N,f}\eps^{-\frac43}$.
\begin{remark}\label{rmk:Lambda_receipt}
    The explicit expression \eqref{eq:Lambda_Nf} of $\Lambda_{N,f}$ can be obtained from \eqref{eq:heurist43} by carefully tracking the constants multiplying the $\eps^2$ term.
    For ease of presentation and to keep the focus of the manuscript on the $\eps$-dependence of the stability timescales, we only comment on it here.
    In particular, for some constant $C_s>0$ depending only on the Sobolev regularity, integrating \eqref{eq:heurist43} and keeping track of the implicit dependence on $N,f$, one obtains
    \begin{equation}
        \int_0^{T_\eps}\norm{\nabla u(t)}_{L^\infty}+|N|^{-1}\norm{\nabla b(t)}_{L^\infty}\dd t\leq C_s\big||N|-|f|\big|^{-\frac12}\left(\frac{\max\{|N|,|f|\}}{\min\{|N|,|f|\}}\right)^{6}T^{\frac32}_\eps\eps^2+O_{N,f}(T_{\eps}^{\frac12}\varepsilon),
    \end{equation}
    where the constant multiplying $T^{\frac32}_\eps\eps^2$ arises as follows.
    First, the dispersion constant $\big||N|-|f|\big|^{-\frac12}$ appears in Lemma~\ref{lemma:local_dispersion}.
    Then, using the change of variables \eqref{eq:D_bound_change_variables} twice, we obtain the contribution $\max\{|N|,|f|\}^4\min\{|N|,|f|\}^{-6}$ from the norm of the multiplier $\Omega^{-1}$, tracked in Proposition~\ref{prop:omega-1_bound}.
    Finally, the change of variables \eqref{eq:D_bound_change_variables} contributes an additional factor $\max\{|N|,|f|\}^2$.
\end{remark}
To further extend the $O_{N,f}(\eps^{-\frac43})$ timescale, we derive the equation for the profiles $\e^{\mp\I\Omega(D)t} z_\pm(t)$, in which the quadratic wave-wave interactions oscillate with phases
\begin{equation}
    \Phi_{\sigma,\nu\mu}(\xi,\eta):=-\sigma\Omega(\xi)+\nu\Omega(\eta-\xi)+\mu\Omega(\eta),\qquad \sigma,\nu,\mu\in\{\pm\}.
\end{equation}
As noted in \cite{RemmelSukhatmeSmith2010}, in the regime of Theorem~\ref{thm:2} there are no three-wave resonances, and in fact 
\begin{equation}
    |\Phi_{\sigma,\nu\mu}(\xi,\eta)|\geq 2\min\{|N|,|f|\}-\max\{|N|,|f|\}>0,
\end{equation}
for all $\xi,\eta\in\R^2$. 
This allows for an integration by parts in time (see \cite{Ger10}), which replaces the $O(\eps^2)$ nonlinear contribution with quadratic boundary terms and cubic nonlinear contributions, improving the above heuristic bound \eqref{eq:heurist43} to
\begin{equation} \label{eq:heurist2}
    \norm{\nabla u(t)}_{L^\infty}+|N|^{-1}\norm{\nabla b(t)}_{L^\infty}\lesssim _{N,f}\langle t\rangle^{-\frac12}\eps+\eps^2+\langle t\rangle^{\frac12}\eps^3.
\end{equation}
Then, the time integral of \eqref{eq:heurist2} remains of order one up to time $T_\eps\sim_{N,f}\eps^{-2}$.

Under the $\gamma$-well-prepared assumption \eqref{ass:gamma_wp}, the additional contributions coming from the potential vorticity are controlled by propagating its initial $\eps^\gamma$-smallness, so that the bootstrap argument applies to the same timescales.
\begin{remark}
Two ingredients of the proof deserve some comments.
First, the propagation of $H^s$-smallness of the potential vorticity \eqref{eq:potential_vorticity} relies on a second pair of transported quantities, already pointed out in \cite{XieBuhler2019,ShavitBuhlerShatah2026}: the vertical stratification variables and the (rescaled) absolute momentum, respectively, $\Psi_1:=N^2x_3+b$ and $\Psi_2:=f^2x_1+fu_2$.
As long as $(u_2,b)$ remains small, these two transported quantities combine into a change of coordinates close to the identity, which is used in Proposition~\ref{proposition:q_propagation} to bound $\norm{q(t)}_{H^{s-1}}$ by $\norm{q(0)}_{H^{s-1}}$, hence propagating the desired initial smallness. 

Second, the pseudo-product operators arising from the normal form transformation in Section~\ref{sec:proofof2} are bounded using the elementary representation 
\begin{equation} \label{eq:integral_Laplace}
    \frac{1}{|\Phi_{\sigma,\nu\mu}(\xi,\eta)|}=\int_0^\infty \e^{\kappa\sigma\Omega(\xi)\rho}\e^{-\kappa\nu\Omega(\xi-\eta)\rho}\e^{-\kappa\mu\Omega(\eta)\rho}\dd\rho, \qquad \kappa:=\sign{\Phi_{\sigma,\nu\mu}},
\end{equation}
which factorizes the symbol $\Phi^{-1}_{\sigma,\nu\mu}$ into products of Fourier multipliers, each acting on a single frequency variable.
This is possible since $\Omega$ is bounded and, in the non-resonant regime \eqref{eq:nonresonant_regime}, the integral in \eqref{eq:integral_Laplace} converges.
This substantially simplifies the control of the nonlinear contributions. 
\end{remark}

\subsection{Outline of the paper}
The exposition is organized as follows.
Section~\ref{sec:linear_dynamics} is devoted to the study of the linearized problem: we prove in Proposition~\ref{prop:general_dispersive_estimate} the sharp dispersive estimate in homogeneous Besov spaces associated to the dispersion relation \eqref{eq:dispersion_relation_intro}.
In Section~\ref{sec:nonlinear_dynamics} we set up the bootstrap argument for the full problem \eqref{eq:2D3C}, from which Theorems~\ref{thm:43} and~\ref{thm:2} follow.
In Sections~\ref{sec:proofof43} and~\ref{sec:proofof2} we close the bootstrap for the timescales $T_\eps\sim_{N,f}\eps^{-\frac43}$ and $T_\eps\sim_{N,f}\eps^{-2}$, respectively.
Appendix~\ref{app:appendixBesov} contains the definition of homogeneous Besov spaces and useful properties used in this work.

\section{Linear dynamics} \label{sec:linear_dynamics}

The goal of this section is to investigate the linear dynamics of the system \eqref{eq:2D3C}.
The strategy is to diagonalize the system written in Fourier variables, thereby exposing the oscillatory modes and their dispersion relation \eqref{eq:dispersion_relation_intro}.

To do so, we start by introducing the scalar vorticity $\omega=\nabla^\perp\cdot U$, where $\nabla^\perp=(\de_3,-\de_1)$ and $U:=(u_1,u_3)$, in terms of which the linearization of \eqref{eq:2D3C} reads
\begin{equation}\label{eq:full_system_final_linearised}
    \begin{cases}
    \de_t\omega =\de_3a -\de_1b ,\\
    \de_ta =f^2(-\Delta)^{-1}\de_3\omega ,\\
    \de_tb =-N^2(-\Delta)^{-1}\de_1\omega ,
    \end{cases}
\end{equation}
where $a:=fu_2$.
It is convenient to write it in Fourier variables.
We adopt the following convention: for any function $\phi$, its Fourier transform and its inverse are given by
\begin{equation}
    \hat{\phi}(\xi)=\int_{\R^2}\phi(x)\e^{-\mathrm{i}x\cdot\xi}\dd x\quad  \longleftrightarrow \quad\phi(x)=\frac{1}{4\pi^2}\int_{\R^2}\hat\phi(\xi)\e^{\mathrm{i}x\cdot\xi}\dd\xi,
\end{equation}
where $x=(x_1,x_3)\in\R^2$ and $\xi=(\xi_1,\xi_3)\in\R^2$.

This yields the following pointwise-in-frequency autonomous ODE system
\begin{equation}
\de_t\begin{bmatrix}
\hat{\omega} (\xi) \\
\hat{a} (\xi) \\
\hat{b} (\xi)
\end{bmatrix}=\begin{bmatrix}
0 & \mathrm{i}\xi_3 & -\mathrm{i}\xi_1 \\
\mathrm{i}f^2\frac{\xi_3}{|\xi|^2} & 0 & 0 \\
-\mathrm{i}N^2\frac{\xi_1}{|\xi|^2} & 0 & 0
\end{bmatrix}\begin{bmatrix}
\hat{\omega} (\xi) \\
\hat{a} (\xi) \\
\hat{b} (\xi)
\end{bmatrix},
\end{equation}
which can be solved explicitly.
Indeed, the eigenvalues of the above matrix are
\begin{equation}\label{eq:eigenvalues}
    \lambda_0(\xi)=0, \quad \lambda_+(\xi)=\mathrm{i}\Omega(\xi),\quad \lambda_-(\xi)=-\mathrm{i}\Omega(\xi),
\end{equation}
where the positive symbol $\Omega$ represents the dispersion relation and reads
\begin{equation}\label{eq:dispersion_relation_fourier}
    \Omega(\xi):=\sqrt{\frac{N^2\xi_1^2+f^2\xi_3^2}{\xi_1^2+\xi_3^2}}.
\end{equation}
The corresponding diagonal variables are
\begin{equation}\label{eq:eigenfunctions_fourier}
    \hat q_0(\xi)=\I N^2\xi_1\hat a(\xi)+\I f^2\xi_3 \hat b(\xi),\quad \hat z_\pm(\xi)=\frac1{2|\xi|} \left(\hat \omega(\xi)\pm \Omega^{-1}(\xi)\left(\xi_3\hat a(\xi)-\xi_1\hat b(\xi)\right)\right),
\end{equation}
and satisfy the equations 
\begin{equation}\label{eq:for_hatqz+z-_linear}
    \de_t\hat q_0(\xi)=0, \qquad \de_t\hat z_\pm(\xi)=\pm\I\Omega(\xi)\hat z_\pm(\xi).
\end{equation}
\begin{remark}
    It is natural to associate to these variables their physical-space counterparts, namely 
    \begin{equation}\label{eq:eigenfunctions_real}
     q_0(x)= N^2\de_1 a(x)+f^2\de_3  b(x),\quad  z_\pm(x)=\frac1{2|\nabla|} \left(\omega(x)\mp \I \Omega^{-1}(D)\left(\de_3 a(x)-\de_1 b(x)\right)\right),
    \end{equation}
    where 
    \begin{equation}\label{eq:dispersion_relation_real}
        \Omega(D):=\sqrt{\frac{N^2\de_1^2+f^2\de_3^2}{\de_1^2+\de_3^2}}.
    \end{equation}
    
    We immediately notice that $q_0$ is the linearization of the potential vorticity $q$ defined in \eqref{eq:potential_vorticity}. 
    The variables $z_\pm$ are the two wave variables that complete the descriptive picture of the system \eqref{eq:2D3C}, and the ones that offer rich oscillatory dynamics.
    We note that, upon writing $\omega$ in terms of $U$, they can also be written as
    \begin{equation}
        z_\pm(x)=\frac{1}{2}\left(\left(\frac{\nabla}{|\nabla|}\right)^\perp\cdot U(x)\mp\mathrm{i}\Omega^{-1}(D)\left(\frac{\de_3}{|\nabla|}a(x)-\frac{\de_1}{|\nabla|}b(x)\right)\right).
    \end{equation}
\end{remark}

\subsection{Linear dispersion}
To exploit the oscillatory structure appearing in the equations \eqref{eq:for_hatqz+z-_linear} for $\hat z_\pm$, we analyze the dispersion relation $\Omega$ and establish decay estimates for the semigroup associated with \eqref{eq:dispersion_relation_fourier}.
This is contained in Proposition~\ref{prop:general_dispersive_estimate}, while Proposition~\ref{prop:sharp_dispersion} shows that the time decay is sharp.

We start by writing the action of the semigroup generated by $\I\sigma\Omega(D)$, where $\sigma\in\{\pm\}$, on smooth functions $\phi$ with $\hat\phi\in\mathcal{C}^\infty_c(\R^2)$ as
\begin{equation}
    \e^{\I\sigma\Omega(D) t}\phi(x):=\frac1{4\pi^2}\int_{\R^2}\e^{\I x\cdot\xi+\I\sigma\Omega(\xi)t}\hat{\phi}(\xi)\dd\xi,
\end{equation}
and we establish a frequency-localized dispersive estimate.
\begin{lemma}\label{lemma:local_dispersion}
    Fix $\sigma\in\{\pm\}$ and assume $Nf\neq 0$. Let $\hat\phi$ be a smooth function supported on $\cU=\{\xi:|\xi|\in[\frac34,\frac83]\}$.
    Then, there exists a constant $C>0$, depending only on $\phi$, such that for all $t\in \R$
    \begin{equation}\label{eq:local_dispersion}
        \|\e^{\I\sigma\Omega(D)t}\phi\|_{L^\infty}\leq C\langle||N|-|f||t\rangle^{-\frac12}.
    \end{equation}
\end{lemma}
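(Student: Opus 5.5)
We pass to polar coordinates $\xi=r(\cos\theta,\sin\theta)$ and use that $\Omega$ is $0$-homogeneous. Writing $\Omega(\xi)=\omega(\theta):=\sqrt{N^2\cos^2\theta+f^2\sin^2\theta}$, we have
\begin{equation}
    \e^{\I\sigma\Omega(D)t}\phi(x)=\frac1{4\pi^2}\int_{3/4}^{8/3}\int_0^{2\pi}\e^{\I r|x|\cos(\theta-\theta_x)+\I\sigma t\,\omega(\theta)}\hat\phi(r,\theta)\,r\dd\theta\dd r,
\end{equation}
where $x=|x|(\cos\theta_x,\sin\theta_x)$. The bound $C$ for $|t|\lesssim ||N|-|f||^{-1}$ is trivial, since $\|\hat\phi\|_{L^1}$ controls the integral. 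It therefore suffices to show decay like $(||N|-|f|||t|)^{-1/2}$ for the inner angular integral, uniformly in $r\in[\frac34,\frac83]$ and $x$. We get this from van der Corput's lemma applied in $\theta$ to the phase $\psi(\theta)=r|x|\cos(\theta-\theta_x)+\sigma t\omega(\theta)$, with a smooth amplitude whose total variation is controlled by $\phi$.

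\textbf{Key computation.} With $\delta:=N^2-f^2$ we have $\omega^2=f^2+\delta\cos^2\theta$. Hence $\omega'=-\delta\sin 2\theta/(2\omega)$, and one further differentiation gives
\[
\omega''=-\frac{\delta\cos2\theta}{\omega}-\frac{\delta^2\sin^22\theta}{4\omega^3}.
\]
Both $\omega''$ and $\omega'''$ are of size $\lesssim|\delta|$, with constants depending on $\min\{|N|,|f|\}$. The zeros of $\omega''$ are isolated but occur at some angles, and at such angles we expect $\omega'''\neq0$. The plan is to show the nondegeneracy estimate
\[
|\omega''(\theta)|+|\omega'''(\theta)|\gtrsim\frac{|\delta|}{|N|+|f|}\sim||N|-|f||
\]
for all $\theta$, with constants that may depend on $N,f$. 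By compactness of the circle and analyticity, this reduces to checking that $\omega''$ and $\omega'''$ have no common zero. Note that $|\delta|/(|N|+|f|)=||N|-|f||$, which matches the constant in the statement.

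\textbf{Splitting into regimes.} We then split according to the size of $r|x|$ relative to $|t|$.

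If $r|x|\ll|t|\,||N|-|f||$, the cosine term is a small perturbation of the $\omega$ part. We partition the circle into finitely many arcs: on each arc either $|\omega''|$ or $|\omega'''|$ is bounded below by $c\,||N|-|f||$. On these arcs $|\psi''|\gtrsim|t|\,||N|-|f||$ or $|\psi'''|\gtrsim|t|\,||N|-|f||$, since $r|x|$ is small compared to this. Van der Corput with $k=2$ or $k=3$ then gives decay $(|t|\,||N|-|f||)^{-1/2}$ or $(|t|\,||N|-|f||)^{-1/3}$, respectively.

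If $r|x|\gtrsim|t|\,||N|-|f||$, the term $r|x|\cos(\theta-\theta_x)$ dominates away from $\theta=\theta_x\pm\pi/2$. There $|\psi''|\gtrsim r|x|$ away from $\theta_x\pm\pi/2$, and near those points $|\psi'|$ is large. Combining this with van der Corput gives decay $(r|x|)^{-1/2}\lesssim(|t|\,||N|-|f||)^{-1/2}$. In the intermediate regime $r|x|\sim|t|\,||N|-|f||$, we bound $|\psi''|+|\psi'''|$ from below directly.

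\textbf{Main obstacle.} The hardest step is the degenerate points, where both $\psi''$ and $\psi'''$ might nearly vanish in the intermediate regime. There the third-derivative van der Corput bound only yields $|t|^{-1/3}$ and not $|t|^{-1/2}$.

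To handle this, we first try to exploit the $r$-integration. The phase depends on $r$ only through $r|x|\cos(\theta-\theta_x)$, so we can integrate by parts in $r$ when $|x\cdot\xi/r|$ is large. Alternatively, we treat the full two-dimensional phase $x\cdot\xi+\sigma t\Omega(\xi)$ and use a stationary-phase argument: its Hessian has rank one, with nonzero entry $\sim t(\omega+\omega'')$ in the angular direction.

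The fallback is a sublevel-set argument in $\theta$. It is valid provided $\omega+\omega''$ vanishes only to first order, i.e.\ the curve $\theta\mapsto\omega(\theta)(\cos\theta,\sin\theta)$ has nonvanishing curvature except at simple inflection points. In that case the two-dimensional oscillatory integral still decays like $|t|^{-1/2}$, because the Hessian rank stays at least one everywhere except on a set handled by the $r$-integration.

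Verifying this curvature condition explicitly for $\omega=\sqrt{f^2+\delta\cos^2\theta}$, and tracking the $||N|-|f||$ dependence through it, is the step expected to need the most care.
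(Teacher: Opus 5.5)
There is a genuine gap, and it sits in your reduction step. It is \emph{not} true that the angular integral with phase $\psi(\theta)=r|x|\cos(\theta-\theta_x)+\sigma t\,\omega(\theta)$ decays like $(|t|\,||N|-|f||)^{-1/2}$ uniformly in $r$ and $x$.

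\textbf{Why the fixed-$r$ estimate fails.} Fix any angle $\theta_0$ and any large $t$. Choose $x$ with $r|x|(\cos(\theta_0-\theta_x),\sin(\theta_0-\theta_x))=\sigma t\,(\omega''(\theta_0),\omega'(\theta_0))$, so that $r|x|=|t|(\omega'(\theta_0)^2+\omega''(\theta_0)^2)^{1/2}$. Then $\psi'(\theta_0)=\psi''(\theta_0)=0$ and $\psi'''(\theta_0)=\sigma t\,(\omega'+\omega''')(\theta_0)$. Note that $\omega'+\omega'''\not\equiv0$: otherwise $\omega+\omega''$ would be constant, and the $\pi$-periodic $\omega$ would be constant. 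So for generic $\theta_0$ (with $\hat\phi\neq0$ there), the fixed-$r$ integral is of exact order $|t|^{-1/3}$. The degeneracy you call the ``main obstacle'' is therefore a real obstruction to your strategy. The rate $|t|^{-1/2}$ only appears after integrating in $r$, and your remedies for this step are not carried out.

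\textbf{The two-dimensional heuristic uses the wrong Hessian.} For the $0$-homogeneous $\Omega$, in the frame $e_\theta:=(\cos\theta,\sin\theta)$, $e_\theta^\perp:=(-\sin\theta,\cos\theta)$, one has
\[
\nabla^2\Omega=\frac{1}{r^2}\begin{pmatrix}0&-\omega'\\-\omega'&\omega''\end{pmatrix}.
\]
This has rank two wherever $\omega'\neq0$, and rank one with $\omega''\neq0$ where $\omega'=0$. The rank-one Hessian with entry $(\omega+\omega'')/r$, and the curvature of $\theta\mapsto\omega(\theta)e_\theta$, belong to $1$-homogeneous symbols $|\xi|\omega(\theta)$. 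So the curvature condition you plan to verify is not the relevant one.

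\textbf{Even the small-$|x|$ regime falls short.} For $r|x|\ll|t|\,||N|-|f||$ you settle for $k=3$, hence $|t|^{-1/3}$, near the zeros of $\omega''$. The correct nondegeneracy is
\[
|\omega'|^2+|\omega''|^2\geq(|N|-|f|)^2 .
\]
It follows from $\omega''=-(\omega-|Nf|/\omega)(1+|Nf|/\omega^2)$ together with $(|N|-|f|)^2-|\omega'|^2=(\omega-|Nf|/\omega)^2$. In particular $|\omega'|=||N|-|f||$ exactly where $\omega''$ vanishes, so $k\le2$ always suffices.

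\textbf{How the paper avoids this.} It keeps $x$ out of the phase, which is the precise form of your hint about the $r$-integration. It writes
\[
\e^{\I\sigma\Omega(D)t}\phi(x)=\int_0^{2\pi}\e^{\I\sigma t\omega(\theta)}K_x(\theta)\dd\theta,\qquad K_x(\theta)=\frac1{4\pi^2}\int\e^{\I r\,x\cdot e_\theta}\hat\phi(re_\theta)\,r\dd r .
\]
Van der Corput is then applied to the $x$-independent phase $t\omega$: the first-derivative version on the finitely many monotonicity intervals where $|\omega'|>||N|-|f||/\sqrt2$, and the second-derivative version elsewhere. All that is needed is $\|K_x\|_{L^\infty}+\|\de_\theta K_x\|_{L^1}\lesssim_\phi1$ uniformly in $x$.

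This bound holds because $\de_\theta$ produces the factor $\I r\,x\cdot e_\theta^\perp$, and two integrations by parts in $r$ give
\[
|\de_\theta K_x(\theta)|\lesssim_\phi1+\frac{|x\cdot e_\theta^\perp|}{1+(x\cdot e_\theta)^2}.
\]
The integral of the right-hand side over $[0,2\pi]$ is $2\pi+4\arctan|x|\le4\pi$. This yields the sharp rate with a constant depending only on $\phi$. Your regime thresholds, such as $r|x|\gg|t|\sup|\omega''|$, would in any case make the constant depend on $\max\{|N|,|f|\}/\min\{|N|,|f|\}$.
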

\begin{proof}
    It suffices to consider $|t(|N|-|f|)|>1$, since the remaining case follows from the direct bound 
    \begin{equation}
        \norm{\e^{\I\sigma\Omega(D)t}\phi}_{L^\infty}\leq C\|\hat\phi\|_{L^1_\xi}.
    \end{equation}
    
    Using polar coordinates $\xi=(rOs\theta,r\sin\theta)$, the symbol $\Omega(\theta)=\sqrt{N^2Os^2\theta+f^2\sin^2\theta}$ is independent of $r$, since it is $0$-homogeneous, and therefore the semigroup factorizes as
    \begin{equation} \label{eq:lemma22_step1}
        \e^{\I\sigma\Omega(D)t}\phi(x)=\int_0^{2\pi }\e^{\I\sigma t\Omega(\theta)}K_x(\theta)\dd\theta,
    \end{equation}
    with 
    \begin{equation}\label{eq:def_K}
         \qquad K_x(\theta):=\frac1{4\pi^2}\int_{3/4}^{8/3} \e^{\I r(x_1Os\theta+x_3\sin\theta)}\hat{\phi}(rOs\theta,r\sin\theta)r\dd r.
    \end{equation}
    A direct computation gives
    \begin{equation}\label{eq:derivatives_omega_theta}
        \partial_\theta \Omega =\frac{(f^2-N^2)\sin\theta\cos\theta}{\Omega},\qquad \partial_\theta^2\Omega=-\left(\Omega-\frac{|Nf|}{\Omega}\right)\left(1+\frac{|Nf|}{\Omega^2}\right),
    \end{equation}
    and the identity 
    \begin{equation}\label{eq:identity_dethetaOmega}
        (|N|-|f|)^2-|\partial_\theta\Omega|^2=\left(\Omega-\frac{|Nf|}{\Omega}\right)^2.
    \end{equation}
    To apply the van der Corput lemma to \eqref{eq:lemma22_step1}, we first combine \eqref{eq:derivatives_omega_theta} with \eqref{eq:identity_dethetaOmega} to deduce the pointwise estimate
    \begin{equation}\label{eq:key_ineq_Omega}
        |\de_\theta\Omega|^2+|\de_\theta^2\Omega|^2\geq (|N|-|f|)^2.
    \end{equation}
    Hence, when $|\de_\theta\Omega|^2>\frac{1}{2}\big||N|-|f|\big|^2>0$, the van der Corput lemma, applied with the first derivative, gives on a uniformly bounded number of intervals where $\de_\theta\Omega$ is monotone, gives 
    \begin{equation}\label{eq:outer_Omega}
        \left|\int_{\{|\de_\theta\Omega|>\frac{1}{\sqrt2}\big||N|-|f|\big|\}}\e^{\I\sigma t\Omega(\theta)}K_x(\theta)\dd\theta\right|\lesssim \big| (|N|-|f|)t\big|^{-1} \left(\norm{K_x}_{L^\infty}+\norm{\de_\theta K_x}_{L^1}\right).
    \end{equation}
    On the other hand, if $|\de_\theta\Omega|^2\leq\frac{1}{2}\big||N|-|f|\big|^2$, then \eqref{eq:key_ineq_Omega} implies that $|\de^2_\theta\Omega|^2>\frac{1}{2}\big||N|-|f|\big|^2>0$, and so van der Corput Lemma, applied with the second derivative, yields
    \begin{equation}\label{eq:inner_Omega}
        \left|\int_{\{|\de_\theta\Omega|\leq\frac{1}{\sqrt2}\big||N|-|f|\big|\}}\e^{\I\sigma t\Omega(\theta)}K_x(\theta)\dd\theta\right|\lesssim \big| (|N|-|f|)t\big|^{-\frac12} \left(\norm{K_x}_{L^\infty}+\norm{\de_\theta K_x}_{L^1}\right).
    \end{equation}
    Combining \eqref{eq:inner_Omega} with \eqref{eq:outer_Omega} and using that $\big|(|N|-|f|)t\big|>1$ finally gives 
    \begin{equation}
        \left|\int_0^{2\pi}\e^{\I\sigma t\Omega(\theta)}K_x(\theta)\dd\theta\right|\lesssim\big| (|N|-|f|)t\big|^{-\frac12} \left(\norm{K_x}_{L^\infty}+\norm{\de_\theta K_x}_{L^1}\right).
    \end{equation}
    The lemma then follows once we show that the norms of $K_x$ are bounded uniformly in $x$.
    To this extent, note that $\norm{K_x}_{L^\infty}\leq C_\phi$ follows from \eqref{eq:def_K}, while differentiation in $\theta$ followed by two integrations by parts in $r$, using the compact support of $\hat\phi$, yields \begin{equation*}\begin{aligned} \norm{\de_\theta K_x}_{L^1}&\lesssim_\phi 1+\int_0^{2\pi}\frac{|-x_1\sin\theta+x_3Os\theta|}{1+(x_1Os\theta+x_3\sin\theta)^2}\dd\theta\\&=1+4\arctan|x|\lesssim1.\end{aligned}\end{equation*}
    As a consequence, the localized dispersive estimate \eqref{eq:local_dispersion} holds, which concludes the proof.
\end{proof}
A direct consequence of Lemma~\ref{lemma:local_dispersion} is the following dispersive estimate in homogeneous Besov spaces.
\begin{proposition} \label{prop:general_dispersive_estimate}
Fix $\sigma\in\{\pm\}$ and assume $N^2\neq f^2$. Let $\phi\in \dot B^{s+2}_{1,q}$ for some $q\in[1,\infty]$ and $s\in\R$. 
Then, for all $t\in \R$, it holds
\begin{align}\label{eq:general_dispersive_estimate}
    \|\e^{\I\sigma\Omega(D)t}\phi\|_{\dot B^{s}_{\infty,q}}&\lesssim_{N,f} \langle t\rangle^{-\frac12}\|\phi\|_{\dot{B}^{s+2}_{1,q}}.
\end{align}
\end{proposition}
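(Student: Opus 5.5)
The plan is to reduce \eqref{eq:general_dispersive_estimate} to a dyadic $L^1\to L^\infty$ bound for each Littlewood--Paley block and then sum in $\ell^q$.

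Let $\Delta_j$ denote the homogeneous Littlewood--Paley projections, whose symbols are supported in $\{|\xi|\in[\frac34 2^j,\frac83 2^j]\}$. Let $\tilde\varphi$ be a smooth bump with support in $\cU$ that equals $1$ on the support of the dyadic cutoff $\varphi$; a slightly fattened annulus if needed, with $\cU$ enlarged accordingly, since Lemma~\ref{lemma:local_dispersion} only uses compact support away from the origin. Since $\Omega$ commutes with Fourier multipliers, we write
\begin{equation}
    \Delta_j \e^{\I\sigma\Omega(D)t}\phi=K_{j,t}*\Delta_j\phi,\qquad \widehat{K_{j,t}}(\xi)=\e^{\I\sigma\Omega(\xi)t}\tilde\varphi(2^{-j}\xi).
\end{equation}
Young's inequality then gives
\begin{equation}
    \norm{\Delta_j \e^{\I\sigma\Omega(D)t}\phi}_{L^\infty}\leq\norm{K_{j,t}}_{L^\infty}\norm{\Delta_j\phi}_{L^1}.
\end{equation}

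The key scaling observation is that $\Omega$ is $0$-homogeneous, so $\Omega(2^j\xi)=\Omega(\xi)$. Changing variables $\xi=2^j\zeta$ yields
\begin{equation}
    K_{j,t}(x)=2^{2j}K_{0,t}(2^jx),\qquad K_{0,t}=\e^{\I\sigma\Omega(D)t}\check{\tilde\varphi},
\end{equation}
with the same time $t$ and no rescaling of time. Lemma~\ref{lemma:local_dispersion}, applied with the fixed profile $\check{\tilde\varphi}$ (so the constant $C$ is universal), gives
\begin{equation}
    \norm{K_{j,t}}_{L^\infty}\leq C\,2^{2j}\langle||N|-|f||t\rangle^{-\frac12}\lesssim_{N,f}2^{2j}\langle t\rangle^{-\frac12}.
\end{equation}
The last step uses $N^2\neq f^2$, so that $\langle |\,|N|-|f|\,|\,t\rangle\gtrsim_{N,f}\langle t\rangle$. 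The factor $2^{2j}$ accounts exactly for the loss of two derivatives.

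To conclude, multiply by $2^{js}$ and take the $\ell^q$ norm in $j$:
\begin{equation}
    \norm{\e^{\I\sigma\Omega(D)t}\phi}_{\dot B^s_{\infty,q}}=\norm{2^{js}\norm{\Delta_j\e^{\I\sigma\Omega(D)t}\phi}_{L^\infty}}_{\ell^q}\lesssim_{N,f}\langle t\rangle^{-\frac12}\norm{2^{j(s+2)}\norm{\Delta_j\phi}_{L^1}}_{\ell^q}.
\end{equation}
The right-hand side is $\langle t\rangle^{-\frac12}\norm{\phi}_{\dot B^{s+2}_{1,q}}$. One also has to check that the semigroup is well defined on $\dot B^{s+2}_{1,q}$, which is modulo polynomials or on the realization in the appendix. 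Since it is a unimodular multiplier, it is well defined blockwise, and the identity is justified on Schwartz functions with Fourier support away from the origin before extending by density, or directly blockwise when $q=\infty$.

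The only delicate point is uniformity: the constant from Lemma~\ref{lemma:local_dispersion} depends on the profile, so it is essential that a single fixed $\tilde\varphi$ is used for all $j$. The homogeneity of $\Omega$ guarantees this and keeps the time variable unscaled.
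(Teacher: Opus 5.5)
Your proposal is correct and is essentially the paper's proof: Young's inequality on each dyadic block, the scaling identity $K_{j,t}(x)=2^{2j}K_{0,t}(2^jx)$ coming from the $0$-homogeneity of $\Omega$, Lemma~\ref{lemma:local_dispersion} applied to a single fixed profile, and then the $\ell^q$ summation. The only cosmetic difference is that you put the fattened cutoff in the kernel rather than on $\phi$ (the paper uses $\tilde{\dot\Delta}_j\phi$), so your right-hand side is exactly $\|\dot\Delta_j\phi\|_{L^1}$.
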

\begin{proof}
    For $q<\infty$, using the definition of $\dot B^s_{\infty,q}$ (see Appendix~\ref{app:appendixBesov}) and Young's inequality for convolutions, we obtain
\begin{align}
    \|\e^{\I\sigma\Omega(D)t}\phi\|_{\dot B^s_{\infty,q}}^q
    &\lesssim\sum_{j\in\mathbb Z} 2^{jsq}\|\dot\Delta_j \e^{\I\sigma\Omega(D)t}*\tilde{\dot\Delta}_j\phi\|_{L^\infty}^q
    \lesssim \sum_{j\in\mathbb Z}2^{jsq}\|\dot\Delta_{j}\e^{\I\sigma\Omega(D)t}\|_{L^\infty}^q \|\tilde{\dot\Delta}_j\phi\|_{L^1}^q,
\end{align}
where $\tilde{\dot\Delta}$ is a slightly fattened Littlewood--Paley projector. For $q=\infty$, the same argument applies with suprema in place of sums.
The proposition then follows after proving that
\begin{equation}\label{eq:dispersion_claim}
    \|\dot\Delta_{j}\e^{\I\sigma\Omega(D)t}\|_{L^\infty}=2^{2j} \|\dot\Delta_{0}\e^{\I\sigma\Omega(D)t}\|_{L^\infty}\lesssim_{N,f}2^{2j}\langle t\rangle^{-\frac12},
\end{equation}
where $\dot\Delta_{j}\e^{\I\sigma\Omega(D)t}$ denotes (with a slight abuse of notation) the convolution kernel of the operator $\dot\Delta_{j}\e^{\I\sigma\Omega(D)t}$.
To this extent, the first identity in \eqref{eq:dispersion_claim} follows from the $0$-homogeneity of $\Omega$, since
\begin{equation}
    \dot\Delta_{j}\e^{\I\sigma\Omega(D)t}(x) := \frac1{4\pi^2}\int_{\mathbb R^2} \varphi(2^{-j}\xi) \mathrm{e}^{\mathrm{i} x\cdot\xi+\I\sigma t\Omega(\xi)} \,\dd\xi= 2^{2j} \dot\Delta_{0}\e^{\I\sigma\Omega(D)t}(2^jx),
\end{equation}
while Lemma~\ref{lemma:local_dispersion} proves the inequality in \eqref{eq:dispersion_claim}, hence concluding the proof.
\end{proof}
The last proposition in this section is devoted to proving that the dispersive bound \eqref{eq:local_dispersion} is sharp.
\begin{proposition}\label{prop:sharp_dispersion}
Fix $\sigma\in\{\pm\}$ and assume $N^2\neq f^2$. There exists $\psi\in\mathcal{C}^\infty_c(\R^2)$ such that, for all $|t|$ sufficiently large, it holds
\begin{equation}
    \|\e^{\I\sigma\Omega(D) t}\psi\|_{L^\infty} \sim_{N,f} |t|^{-\frac12}.
\end{equation}
\end{proposition}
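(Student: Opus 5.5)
The upper bound $\lesssim_{N,f}|t|^{-\frac12}$ follows from Lemma~\ref{lemma:local_dispersion} for any $\psi$ with $\hat\psi$ supported in $\cU$. So the plan is to prove the lower bound by a stationary phase computation at a carefully chosen point $x=x(t)$. We will take $\psi$ with $\hat\psi\geq0$, $\hat\psi\in\cC^\infty_c$, supported in a small neighbourhood of a single point $\xi^*=r_0(\cos\theta_0,\sin\theta_0)$ of $\cU$. Here $\theta_0$ is a direction where $\de_\theta^2\Omega(\theta_0)\neq0$. By \eqref{eq:derivatives_omega_theta}, $\de^2_\theta\Omega=0$ only where $\Omega^2=|Nf|$, so this excludes finitely many angles. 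For instance, take $\theta_0=0$: there $\Omega=|N|$, $\de_\theta\Omega=0$, and $\de^2_\theta\Omega(0)=-(|N|-|f|)(1+|f|/|N|)\neq0$ since $N^2\neq f^2$. Strictly speaking $\psi$ is then Schwartz with $\hat\psi$ compactly supported. If genuine compact support in $x$ is required, we can instead take $\psi\in\cC^\infty_c$ with $\hat\psi(\xi^*)\neq0$ and a spatial cutoff, at the cost of handling tails.

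Next we write $\e^{\I\sigma\Omega(D)t}\psi(x)$ as a two-dimensional oscillatory integral $\frac1{4\pi^2}\int \e^{\I t\Phi(\xi)}\hat\psi(\xi)\dd\xi$ with $\Phi(\xi)=\xi\cdot x/t+\sigma\Omega(\xi)$. We choose $x=-t\sigma\nabla\Omega(\xi^*)$, so that $\xi^*$ is a critical point. Since $\Omega$ is $0$-homogeneous, $\nabla\Omega(\xi^*)\perp\xi^*$ and $\mathrm{Hess}\,\Omega$ has $\xi^*$ in its kernel (Euler). The Hessian of $\Phi$ is therefore degenerate in the radial direction, so plain 2D stationary phase fails, and this is the source of the $t^{-1/2}$ rate instead of $t^{-1}$. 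The set of critical points is the radial segment through $\xi^*$.

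To handle this we will pass to polar coordinates as in the proof of Lemma~\ref{lemma:local_dispersion}. The phase becomes $t\bigl(r\,\tfrac{x}{t}\cdot e_\theta+\sigma\Omega(\theta)\bigr)$. With $\theta_0=0$ we have $\nabla\Omega(\xi^*)=0$, so we take $x=0$. Then $\e^{\I\sigma\Omega(D)t}\psi(0)=\int \e^{\I\sigma t\Omega(\theta)}K_0(\theta)\dd\theta$, where $K_0(\theta)=\frac1{4\pi^2}\int\hat\psi(r\cos\theta,r\sin\theta)r\dd r\geq0$ is smooth, supported near $\theta=0$, and $K_0(0)>0$. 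Since $\theta=0$ is a nondegenerate critical point of $\Omega$ on the support, one-dimensional stationary phase gives
\[
\int \e^{\I\sigma t\Omega(\theta)}K_0(\theta)\dd\theta=\e^{\I\sigma t|N|}\e^{\pm\I\pi/4}\sqrt{\frac{2\pi}{|t\,\de^2_\theta\Omega(0)|}}\,K_0(0)+O(|t|^{-\frac32}).
\]
Hence $\|\e^{\I\sigma\Omega(D)t}\psi\|_{L^\infty}\geq|\e^{\I\sigma\Omega(D)t}\psi(0)|\gtrsim_{N,f}|t|^{-\frac12}$ for $|t|$ large.

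The main obstacle is ensuring that the support of $K_0$ contains no other critical point of $\Omega$. Note $\de_\theta\Omega=0$ at $\theta\in\frac\pi2\ZZ$, so $\theta=\pi/2$ and $\pi$ must be excluded. If $\hat\psi$ is supported in a small sector around angle $0$, nondegenerate stationary phase applies cleanly. The constant depends on $|\de^2_\theta\Omega(0)|^{-1/2}$, which is consistent with the $\big||N|-|f|\big|^{-\frac12}$ dependence in Lemma~\ref{lemma:local_dispersion}.
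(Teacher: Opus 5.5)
You have a genuine gap: the statement asks for $\psi\in\cC^\infty_c(\R^2)$, and your function is not admissible. Your stationary-phase computation is correct for the function you build. But a nonzero $\psi$ with $\hat\psi\in\cC^\infty_c$ cannot have compact support, since its Fourier transform would then be entire.

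The repair you defer is not a formality if carried out along your lines. After a spatial cutoff, $\hat\psi$ is no longer supported in a sector, so $K_0$ lives on the whole circle and all four stationary points $\theta\in\frac\pi2\ZZ$ contribute. The coefficient of $|t|^{-\frac12}$ becomes, up to fixed phases, $\alpha\e^{\I\sigma t|N|}+\beta\e^{\I\sigma t|f|}$, which vanishes along a sequence $|t_n|\to\infty$ unless you show $|\alpha|\neq|\beta|$. Also, Lemma~\ref{lemma:local_dispersion} no longer supplies the upper bound.

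A short way to close the gap while keeping your construction is a perturbation argument. Let $\psi_0$ be your band-limited function and $\psi_R:=\chi(\cdot/R)\psi_0$. Proposition~\ref{prop:general_dispersive_estimate} (with $s=0$, $q=1$) and $\dot B^0_{\infty,1}\hookrightarrow L^\infty$ give $|(\e^{\I\sigma\Omega(D)t}(\psi_0-\psi_R))(0)|\le C_{N,f}\langle t\rangle^{-\frac12}\norm{\psi_0-\psi_R}_{\dot B^2_{1,1}}$. Moreover $\norm{\psi_0-\psi_R}_{\dot B^2_{1,1}}\lesssim\norm{\psi_0-\psi_R}_{W^{3,1}}\to0$ as $R\to\infty$. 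So for $R$ large your lower bound $c_0|t|^{-\frac12}$ survives with $c_0/2$, and the upper bound for $\psi_R\in\cC^\infty_c$ comes from the same proposition.

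Once patched, your route differs genuinely from the paper's. The paper takes $\psi$ radial with $\psi(0)=1$, so at $x=0$ the angular integral factors out exactly: $(\e^{\I\sigma\Omega(D)t}\psi)(0)=\frac{1}{2\pi}\int_0^{2\pi}\e^{\I\sigma t\Omega(\theta)}\dd\theta$. No frequency localization is needed, hence there is no conflict with compact support. It then handles all four stationary points and excludes cancellation by the reverse triangle inequality, using $\sqrt{|N|}\neq\sqrt{|f|}$. Your sector localization sidesteps the cancellation question. Before the cutoff, it also directly shows sharpness of Lemma~\ref{lemma:local_dispersion}, which the paper recovers only afterwards through $\dot\Delta_{j_0}\psi$.

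There is one inaccuracy in your heuristics. Euler's relation for the $(-1)$-homogeneous field $\nabla\Omega$ reads $\mathrm{Hess}\,\Omega(\xi)\xi=-\nabla\Omega(\xi)$, not $0$. In polar coordinates $\det\mathrm{Hess}\,\Omega=-(\de_\theta\Omega)^2/r^4$, so for $\theta_0\notin\frac\pi2\ZZ$ the critical point is nondegenerate and the decay is $|t|^{-1}$, too fast to witness sharpness. The relevant requirement is therefore $\de_\theta\Omega(\theta_0)=0$, not merely $\de^2_\theta\Omega(\theta_0)\neq0$. Your actual choice $\theta_0=0$ satisfies both, so the final argument is unaffected.
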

\begin{proof}
Choose a radial function $\psi\in\cC_c^\infty(\R^2)$ with $\psi(0)=1$. Proposition~\ref{prop:general_dispersive_estimate} gives
\begin{equation}
\|\e^{\I\sigma\Omega(D) t}\psi\|_{L^\infty} \lesssim_{N,f} |t|^{-\frac12}\|\psi\|_{\dot B^2_{1,1}} \lesssim_{N,f,\psi} |t|^{-\frac12}, \qquad |t|\ge1.
\end{equation}
On the other hand, since $\hat\psi$ is radial and $\Omega$ is $0$-homogeneous, polar coordinates $(r,\theta)$ and Fourier inversion at the origin give
\begin{equation}\label{eq:sharp_angular_integral}
(\e^{\I\sigma\Omega(D) t}\psi)(0) =\frac1{2\pi} \int_0^{2\pi} \e^{\I\sigma t\sqrt{N^2Os^2\theta+f^2\sin^2\theta}} \dd\theta.
\end{equation}
The angular phase has precisely four stationary points on the circle, at $0,\frac\pi2,\pi,\frac{3\pi}{2}$, at which its second derivative takes the values
\begin{equation}
    \left.\frac{\dd^2}{\dd\theta^2}\Omega(\theta)\right|_{\theta=0,\pi}=\frac{f^2-N^2}{|N|},\qquad
    \left.\frac{\dd^2}{\dd\theta^2}\Omega(\theta)\right|_{\theta=\frac\pi2,\frac{3\pi}2}=\frac{N^2-f^2}{|f|}.
\end{equation}
Thus, all four stationary points are non-degenerate, and we can apply the one-dimensional stationary phase formula to \eqref{eq:sharp_angular_integral} to get
\begin{align}
\begin{split}
    (\e^{\I\sigma\Omega(D) t}\psi)(0) &= \sqrt{\frac{2}{\pi|N^2-f^2|}}|t|^{-\frac12}\\ &\quad\times\Bigl( \sqrt{|N|} \e^{\I\sigma t|N| -\frac{\I\pi}{4}\operatorname{sgn}(\sigma t(N^2-f^2))} +\sqrt{|f|}\e^{\I\sigma t|f|+\frac{\I\pi}{4}\operatorname{sgn}(\sigma t(N^2-f^2))}\Bigr)\\
    &\qquad+O_{N,f}(|t|^{-\frac32}).
\end{split}
\end{align}
Here the phase and amplitude are independent of $t$, and the second derivatives at the stationary points are fixed nonzero numbers, hence the implicit constant in the remainder depends only on $N$ and $f$.
By the reverse triangle inequality, there exists a constant $C_{N,f}>0$ depending on $N,f$ such that
\begin{equation}\label{eq:pointwise_sharp_bound}
    \|\e^{\I\sigma\Omega(D) t}\psi\|_{L^\infty} \ge |(\e^{\I\sigma\Omega(D) t}\psi)(0)| \ge \sqrt{\frac{2}{\pi|N^2-f^2|}}\, \left|\sqrt{|N|}-\sqrt{|f|}\right|\,|t|^{-\frac12} - C_{N,f}|t|^{-\frac32}.
\end{equation}
Note that, since $|N|\ne|f|$, the coefficient of $|t|^{-\frac12}$ in \eqref{eq:pointwise_sharp_bound} is strictly positive. The remainder can therefore be absorbed for all sufficiently large $|t|$, giving
\begin{equation}
    \|\e^{\I\sigma\Omega(D) t}\psi\|_{L^\infty}\gtrsim_{N,f} |t|^{-\frac12},
\end{equation}
which completes the proof.
\end{proof}
Analogous sharpness statements for $f=0$ can be found in \cite{ElgWid2015}.

From Proposition~\ref{prop:sharp_dispersion}, it follows that the decay rate in \eqref{eq:general_dispersive_estimate} is sharp.
Indeed, since $\psi(0)=1$, the Littlewood--Paley decomposition ensures that there exists $j_0\in\ZZ$ with $(\dot\Delta_{j_0}\psi)(0)\neq0$.
Using radial dyadic cutoffs, the same computation as in \eqref{eq:sharp_angular_integral} gives
\begin{equation}
(\dot\Delta_{j_0}\e^{\I\sigma\Omega(D)t}\psi)(0)=\frac{(\dot\Delta_{j_0}\psi)(0)}{2\pi}\int_0^{2\pi}\e^{\I\sigma t\sqrt{N^2Os^2\theta+f^2\sin^2\theta}}\dd\theta.
\end{equation}
The lower bound for this angular integral established in \eqref{eq:pointwise_sharp_bound} yields, for every $s\in\R$, $1\le q\le\infty$, and sufficiently large $|t|$,
\begin{equation}
\|\e^{\I\sigma\Omega(D)t}\psi\|_{\dot B^s_{\infty,q}}\ge 2^{j_0s} |(\dot\Delta_{j_0}\e^{\I\sigma\Omega(D)t}\psi)(0)|\gtrsim_{N,f} |t|^{-\frac12}.
\end{equation}
This lower bound, together with the upper bound \eqref{eq:general_dispersive_estimate} applied to $\dot\Delta_{j_0}\psi\in\dot B^{s+2}_{1,q}$ (whose Fourier support lies in a fixed annulus) proves the claimed sharpness.

\section{Nonlinear dynamics} \label{sec:nonlinear_dynamics}

In this section, we study the nonlinear dynamics of \eqref{eq:2D3C}.
First, in Section~\ref{subsec:blow_up}, we derive the blow-up criterion in Sobolev norms via energy estimates.
This will later be employed to propagate the bootstrap assumption, see Proposition~\ref{prop:bootstrap} in Section~\ref{subsec:bootstrap}.
This bootstrap argument is the main tool in the proofs of Theorems~\ref{thm:43} and~\ref{thm:2}.
Finally, we devote Section~\ref{subsec:PV} to the study of the potential vorticity $q$, in particular to the propagation of smallness of its Sobolev norms.

\subsection{Blow-up criterion} \label{subsec:blow_up}
Fix $s>4$.
For the system \eqref{eq:2D3C}, we then define the energy $E_s$ by setting
\begin{equation}\label{eq:energy_defin}
    E_s^2:=\norm{u}^2_{H^s}+N^{-2}\norm{b}^2_{H^s}=\norm{U}^2_{H^s}+f^{-2}\norm{a}^2_{H^s}+N^{-2}\norm{b}^2_{H^s},
\end{equation}
where we recall that $U=(u_1,u_3)$ and $a=fu_2$.
The following blow-up criterion holds.
\begin{proposition}\label{prop:blow_up_criterion}
    Let $N,f$ be as in Theorem~\ref{thm:43}. Then there exists a constant $c>0$, depending only on $s$, such that, for all $T\geq 0$,
    \begin{equation}\label{eq:blow-up_Uab}
        E_s^2(T)\leq E_s^2(0)\e^{2c\int_0^TD(t)\dd t},\quad D(t):=\norm{\nabla U(t)}_{L^\infty}+|f|^{-1}\norm{\nabla a(t)}_{L^\infty}+|N|^{-1}\norm{\nabla b(t)}_{L^\infty}\hspace{-2pt}.
    \end{equation}
\end{proposition}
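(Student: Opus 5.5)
We prove this with a standard $H^s$ energy estimate in the weighted variables $(U, a/f, b/N)$. Once the linear terms are weighted this way they are skew-adjoint, and the nonlinear terms are handled by the Kato--Ponce commutator estimate. Grönwall's inequality then gives \eqref{eq:blow-up_Uab}.

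\textbf{Step 1: rewrite the system.} In components, \eqref{eq:2D3C} becomes
\begin{equation}
\begin{cases}
\de_t U + (U\cdot\nabla)U + \nabla p = (0,b) + (-a,0)^{\!\top}\text{-type Coriolis term},\\
\de_t a + U\cdot\nabla a + f^2 u_1 = 0,\\
\de_t b + U\cdot\nabla b + N^2 u_3 = 0,
\end{cases}
\end{equation}
with $\nabla\cdot U=0$. The Coriolis term comes from $fe_3\times u=f(-u_2,u_1,0)$, which contributes $-a$ to the $u_1$ equation and $f^2u_1$ to the $a$ equation.

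\textbf{Step 2: the energy identity.} Apply $\Lambda^s=(1-\Delta)^{s/2}$ to each equation. Pair the $U$ equation with $\Lambda^sU$, the $a$ equation with $f^{-2}\Lambda^s a$, and the $b$ equation with $N^{-2}\Lambda^s b$, then sum.

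The linear terms cancel exactly. The Coriolis terms combine to $-\langle\Lambda^s a,\Lambda^s u_1\rangle+f^{-2}f^2\langle\Lambda^s u_1,\Lambda^s a\rangle=0$. The buoyancy terms combine to $\langle\Lambda^s b,\Lambda^s u_3\rangle-N^{-2}N^2\langle \Lambda^s u_3,\Lambda^s b\rangle=0$. The pressure term vanishes because $\nabla\cdot U=0$.

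This leaves
\begin{equation}
\tfrac12\ddt E_s^2 = -\langle \Lambda^s(U\cdot\nabla U),\Lambda^s U\rangle - f^{-2}\langle \Lambda^s(U\cdot\nabla a),\Lambda^s a\rangle - N^{-2}\langle \Lambda^s(U\cdot\nabla b),\Lambda^s b\rangle.
\end{equation}

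\textbf{Step 3: the nonlinear terms.} In each pairing, subtract $U\cdot\nabla\Lambda^s(\cdot)$. This piece integrates to zero by the divergence-free condition. What remains is a commutator, which the Kato--Ponce estimate controls by
\begin{equation}
\norm{[\Lambda^s,U\cdot\nabla]g}_{L^2}\lesssim_s \norm{\nabla U}_{L^\infty}\norm{g}_{H^s}+\norm{U}_{H^s}\norm{\nabla g}_{L^\infty}.
\end{equation}
For the $U$ equation we take $g=U$, which gives a bound $\lesssim\norm{\nabla U}_{L^\infty}\norm{U}_{H^s}^2$.

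For the $a$ equation we take $g=a$. After multiplying by $f^{-2}$, the term is bounded by
\begin{equation}
|f|^{-1}\norm{a}_{H^s}\Big(\norm{\nabla U}_{L^\infty}|f|^{-1}\norm{a}_{H^s}+\norm{U}_{H^s}|f|^{-1}\norm{\nabla a}_{L^\infty}\Big)\lesssim D\,E_s^2.
\end{equation}
The same argument with $g=b$ and the weight $N^{-2}$ gives the bound for the $b$ equation.

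The key point is that the weights distribute correctly: one factor $|f|^{-1}$ goes onto $\norm{a}_{H^s}$ and the other onto $\norm{\nabla a}_{L^\infty}$, and likewise for $|N|^{-1}$ and $b$. As a result the constant depends only on $s$ and not on $N$ or $f$.

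\textbf{Step 4: conclusion.} Summing the three bounds gives $\ddt E_s^2\le 2cD(t)E_s^2$. Grönwall's inequality then yields \eqref{eq:blow-up_Uab}.

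To justify the computations, one either works with smooth solutions and passes to the limit, or uses Friedrichs mollifiers. This is routine given the local well-posedness theory in $H^s$ with $s>2$.

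I do not expect a real obstacle here. The only delicate points are checking the exact sign cancellation of the linear terms and making sure the commutator estimate is applied so that only $D(t)$ appears, with no $N$- or $f$-dependent constant.
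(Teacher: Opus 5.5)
Your proposal is correct and is essentially the paper's argument. The paper tests the full three-component momentum equation with $\cJ_s u$ and the buoyancy equation with $N^{-2}\cJ_s b$. This coincides with your weighted pairing, since $f^{-2}\norm{a}_{H^s}^2=\norm{u_2}_{H^s}^2$, and the Coriolis term drops out because $(e_3\times u)\cdot u=0$; the paper then concludes via Kato--Ponce and Gr\"onwall exactly as you do. (Minor slip: in your Step 1 display, the Coriolis term moved to the right-hand side of the $u_1$ equation is $+a$, not $-a$, although your Step 2 cancellation uses the correct relative signs.)
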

\begin{proof}
    We apply $\cJ_s:=(I-\Delta)^\frac s2$ to the system \eqref{eq:2D3C}, test the momentum and buoyancy equations with $\cJ_su$ and $N^{-2}\cJ_sb$, respectively, and sum the results. Using $\nabla\cdot u=0$, we obtain
    \begin{equation}
        \frac 1 2 \ddt E_s^2(t)=\frac{1}{N^2}\int_{\R^2}\cJ_sb[u\cdot\nabla,\cJ_s]b\dd x+\int_{\R^2}\cJ_su\cdot[u\cdot\nabla,\cJ_s]u\dd x.
    \end{equation}
    A standard Kato--Ponce argument combined with the Cauchy--Schwarz and Young inequalities leads to
    \begin{align}
        \frac12\ddt E_s^2(t)&\lesssim\|\cJ_sb\|_{L^2}\|[u\cdot\nabla,\cJ_s]b\|_{L^2}+\|\cJ_su\|_{L^2}\|[u\cdot\nabla,\cJ_s]u\|_{L^2}\\
        &\lesssim N^{-2}\|b\|_{H^s}^2\|\nabla u\|_{L^\infty}+N^{-2}\|b\|_{H^s}\|u\|_{H^s}\|\nabla b\|_{L^\infty}+\|u\|_{H^s}^2\|\nabla u\|_{L^\infty}\\
        &\lesssim E_s^2(t)\big(\|\nabla u\|_{L^\infty}+|N|^{-1}\|\nabla b\|_{L^\infty}\big).
    \end{align}
    Since $\|\nabla u\|_{L^\infty}\le\|\nabla U\|_{L^\infty}+|f|^{-1}\|\nabla a\|_{L^\infty}$, the bound \eqref{eq:blow-up_Uab} follows by Gr\"onwall lemma.
\end{proof}

\subsection{Bootstrap argument} \label{subsec:bootstrap}
Here, we set up the bootstrap argument that will allow us to prove our main results.
More specifically, Theorems~\ref{thm:43} and~\ref{thm:2} follow from the following proposition, whose proof relies on the blow-up criterion of Proposition~\ref{prop:blow_up_criterion}.
\begin{proposition}\label{prop:bootstrap}
    Under the assumptions of Theorem~\ref{thm:43} (resp. Theorem~\ref{thm:2}), let $T\leq c_0 \Lambda_{N,f} \eps^{-\frac43}$ (resp. $T\leq \tilde c_0\tilde \Lambda_{N,f} \eps^{-2}$).
    If
    \begin{equation}\label{eq:bootstrap_assumption}
        E_s^2(t)\leq 2\varepsilon^2,\quad\forall t\in[0,T],
    \end{equation}
    then the same estimate holds on $[0,T]$ with $2$ replaced by $\frac32$.
\end{proposition}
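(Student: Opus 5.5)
By Proposition~\ref{prop:blow_up_criterion}, and since $E_s^2(0)=\eps^2$ by \eqref{eq:thm1_assumption_ic_0}, it suffices to show that under \eqref{eq:bootstrap_assumption}
\begin{equation}
    2c\int_0^T D(t)\dd t\leq \log\tfrac32 .
\end{equation}
Then $E_s^2(T')\leq \tfrac32\eps^2$ for every $T'\in[0,T]$. So the whole task is to bound $\int_0^T D$, and $D$ should be expressed through the variables $(q,z_+,z_-)$.

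\textbf{Step 1: reduce $D$ to the wave variables.} First rewrite \eqref{eq:2D3C} in the unknowns $(\omega,a,b)$. Then pass to $(q,z_\pm)$, where $q$ is the full potential vorticity \eqref{eq:potential_vorticity}, which differs from $q_0$ by the quadratic term $fJ(u_2,b)$. Inverting the diagonalization \eqref{eq:eigenfunctions_fourier} expresses $\nabla U$, $\nabla a$ and $\nabla b$ as zero-order multipliers (Riesz transforms, $\Omega^{\pm1}(D)$, which are bounded because $\Omega\in[\min(|N|,|f|),\max(|N|,|f|)]$) applied to $\nabla z_\pm$ and $q_0$. In $L^\infty$ such multipliers are not bounded, so I will work in $\dot B^0_{\infty,1}$, on which they are bounded, and use $\dot B^0_{\infty,1}\hookrightarrow L^\infty$. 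Under \eqref{ass:wp} we have $q\equiv0$ by \eqref{eq:transport_q}, so $q_0=-fJ(u_2,b)$ is quadratic and its $\dot B^{1}_{\infty,1}$-type norm is $\lesssim \eps^2$ by Sobolev embedding (here $s>4$ is used). In the $\gamma$-well-prepared case I would also add $\|q(t)\|_{H^{s-1}}\lesssim\eps^\gamma$, using Proposition~\ref{proposition:q_propagation}. This gives
\begin{equation}
D(t)\lesssim_{N,f} \sum_\pm\|z_\pm(t)\|_{\dot B^{1}_{\infty,1}}+\eps^2 .
\end{equation}
The $\eps^2$ term integrates to $T\eps^2\ll1$ on both timescales.

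\textbf{Step 2: Duhamel and dispersion (Theorem~\ref{thm:43}).} Write
\[
z_\pm(t)=\e^{\pm\I\Omega(D)t}z_\pm(0)+\int_0^t\e^{\pm\I\Omega(D)(t-\tau)}\cN_\pm(\tau)\dd\tau ,
\]
with $\cN_\pm$ quadratic (a transport term $u\cdot\nabla$ plus multipliers). Applying Proposition~\ref{prop:general_dispersive_estimate} with $s=1$ gives $\langle t\rangle^{-1/2}\|z_\pm(0)\|_{\dot B^3_{1,1}}\lesssim\langle t\rangle^{-1/2}\eps$ for the linear part. For the Duhamel term the same estimate gives $\int_0^t\langle t-\tau\rangle^{-1/2}\|\cN(\tau)\|_{\dot B^3_{1,1}}\dd\tau$. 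The bound $\|\cN\|_{\dot B^3_{1,1}}\lesssim E_s^2\lesssim\eps^2$ follows from products of $H^s$ functions: $\dot B^3_{1,1}$ is controlled by $L^1$-type norms of $\nabla^{4}$ of $L^2\times L^2$ products, and it needs $s>4$, with care at low frequencies where $|\nabla|^{-1}$ appears in $z_\pm$. This yields $D\lesssim\langle t\rangle^{-1/2}\eps+\langle t\rangle^{1/2}\eps^2$. Integrating, $\int_0^TD\lesssim T^{1/2}\eps+T^{3/2}\eps^2$, which is small for $T\leq c_0\Lambda_{N,f}\eps^{-4/3}$ once $c_0$ is small.

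\textbf{Step 3: normal form (Theorem~\ref{thm:2}).} Pass to the profiles $\e^{\mp\I\Omega t}z_\pm$ and integrate by parts in time in the quadratic Duhamel term, using $|\Phi_{\sigma,\nu\mu}|\geq 2\min-\max>0$. This produces quadratic boundary terms and cubic bulk terms. The pseudo-products with symbol $\Phi^{-1}$ are handled through the factorization \eqref{eq:integral_Laplace} into multipliers, each acting on a single frequency variable; the $\rho$-integral converges exponentially. Each resulting term is then a product of Fourier multipliers (bounded in Besov spaces) applied to single factors. Boundary terms give $\eps^2$ in $\dot B^1_{\infty,1}$ directly. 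Cubic terms are estimated with dispersion as in Step~2, giving $\langle t\rangle^{1/2}\eps^3$, at the cost of one more derivative ($s>5$). Then $\int_0^TD\lesssim T^{1/2}\eps+T\eps^2+T^{3/2}\eps^3\ll1$ for $T\sim\eps^{-2}$.

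\textbf{Main obstacle.} I expect the hardest part to be the precise nonlinear bookkeeping: expressing $\cN_\pm$ in terms of $(z_\pm,q)$ with the singular factor $|\nabla|^{-1}$ at low frequency, and proving the $\dot B^3_{1,1}$ (respectively cubic) product bounds from $H^s$ control alone. Making the Laplace-representation multipliers $\e^{\pm\kappa\Omega(D)\rho}$ act boundedly on Besov spaces should be routine by comparison, since they are smooth zero-homogeneous symbols.
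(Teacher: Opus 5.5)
Your proposal is correct and follows essentially the same route as the paper. The blow-up criterion reduces everything to $\int_0^T D(t)\dd t$, and $D$ is bounded by $\sum_\pm\|z_\pm\|_{\dot B^1_{\infty,1}}$ plus the quadratic term $J(a,b)$, which is what $q_0$ becomes under well-preparedness. Duhamel with the $\langle t\rangle^{-\frac12}$ dispersive estimate then gives the $\eps^{-\frac43}$ scale, and the normal form with the Laplace factorization of $\Phi_{\sigma,\nu\mu}^{-1}$ gives the $\eps^{-2}$ scale.

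Two small points to tidy:
\begin{enumerate}
\item The norm actually needed on $q_0=-J(a,b)$ is $\dot B^0_{\infty,1}$, since only zero-order multipliers act on it in $\nabla(U,a,b)$.
\item In Step~3, the terms coupling $U$ with $\Omega^{-2}(D)\de_j|\nabla|^{-2}J(a,b)$ are not normal-formed; they are bounded directly as a cubic error. The low-frequency $|\nabla|^{-1}$ there is harmless because $J(a,b)=\de_1(a\de_3 b)-\de_3(a\de_1 b)$ is a divergence. The paper obtains the same bound via a commutator estimate.
\end{enumerate}
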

The proof of Proposition~\ref{prop:bootstrap} is deferred to Section~\ref{sec:proofof43} (resp. Section~\ref{sec:proofof2}).
Then, the proof of Theorem~\ref{thm:43} (resp. Theorem~\ref{thm:2}) is straightforward.
\begin{proof}[Proof of Theorems~\ref{thm:43} and~\ref{thm:2}]
    Given Proposition~\ref{prop:bootstrap}, Theorem~\ref{thm:43} (resp. Theorem~\ref{thm:2}) follows via a standard bootstrap argument.
\end{proof}
Proposition~\ref{prop:blow_up_criterion} and Proposition~\ref{prop:bootstrap} are the main ingredients in extending the lifespan of solutions, and they point directly towards a more careful analysis of $D(t)$ ( recall \eqref{eq:blow-up_Uab}).
Indeed, in view of Proposition~\ref{prop:blow_up_criterion}, it is clear that Proposition~\ref{prop:bootstrap} follows upon proving
\begin{equation} \label{eq:proof_of_bootstrap_bu}
    \int_0^{T} D(t)\dd t\le\frac{1}{2c}\log\left(\frac32\right),\quad\forall T\in[0,T_\eps],
\end{equation}
where $c>0$ is the constant appearing in the blow-up criterion \eqref{eq:blow-up_Uab}.
\begin{remark}
    Note that, if $s>2=\frac{d}2+1$ (here $d=2$), the standard Sobolev embedding implies $D(t)\lesssim E_s(t)$, leading to the well-know existence time $T_\eps\gtrsim\eps^{-1}$ for initial data of size $\eps$.
\end{remark}
To study the time integral of $D(t)$, we use the following proposition, which allows us to work with the variables $(q,z_+,z_-)$, exploiting both the well-preparedness assumption for $q$ and the dispersive behavior encoded by $(z_+,z_-)$.
\begin{proposition}\label{prop:D_bound_general}
    The quantity $D(t)$ satisfies
    \begin{equation}\label{eq:D_bound_general}
        D(t)\lesssim_{N,f} \|z_-(t)\|_{\dot B^1_{\infty,1}}+\|z_+(t)\|_{\dot B^1_{\infty,1}}+ \norm{q(t)}_{\dot B^0_{\infty,1}}+\norm{J(a,b)(t)}_{\dot B^0_{\infty,1}}.
    \end{equation} 
\end{proposition}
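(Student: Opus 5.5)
The plan is to invert the linear change of variables $(\omega,a,b)\mapsto(q_0,z_+,z_-)$ at the level of Fourier multipliers. Then I will bound each of $\nabla U$, $\nabla a$, $\nabla b$ in $L^\infty$ through the embedding $\dot B^0_{\infty,1}\hookrightarrow L^\infty$. The key point is that all inverse-change symbols are $0$-homogeneous and smooth on the sphere, because $\Omega\geq\min\{|N|,|f|\}>0$. Such multipliers are bounded on every $\dot B^s_{\infty,1}$: a $0$-homogeneous symbol smooth away from the origin, localized to a dyadic annulus, has an $L^1$ kernel uniformly in $j$ by scaling. This bound is the estimate I expect to be referred to as \eqref{eq:D_bound_change_variables}.

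First I invert the relations \eqref{eq:eigenfunctions_fourier}. Directly,
\[
|\xi|(\hat z_++\hat z_-)=\hat\omega,\qquad |\xi|(\hat z_+-\hat z_-)=\Omega^{-1}(\xi_3\hat a-\xi_1\hat b).
\]
Together with $\hat q_0=\I N^2\xi_1\hat a+\I f^2\xi_3\hat b$, this gives a $2\times2$ system for $(\hat a,\hat b)$. Its determinant is
\[
-\I(f^2\xi_3^2+N^2\xi_1^2)=-\I|\xi|^2\Omega^2,
\]
which is nonvanishing away from $\xi=0$. Solving it, $\hat a$ and $\hat b$ equal $0$-homogeneous multipliers applied to $\Omega|\xi|^{-1}\cdot|\xi|(\hat z_+-\hat z_-)$ and to $|\xi|^{-1}\hat q_0$. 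For example,
\[
\hat a=\frac{\xi_3\,\Omega\,|\xi|(\hat z_+-\hat z_-)+(\ldots)\,\hat q_0}{|\xi|^2\Omega^2}.
\]
Hence $\nabla a$ and $\nabla b$ are sums of $0$-homogeneous multipliers applied to $|\nabla| z_\pm$ and to $q_0$. Similarly, $\nabla U=\nabla\nabla^\perp(-\Delta)^{-1}\omega$ is a Riesz-type $0$-homogeneous multiplier applied to $|\nabla|(z_++z_-)$. By the multiplier bound and $\||\nabla|z\|_{\dot B^0_{\infty,1}}\sim\|z\|_{\dot B^1_{\infty,1}}$, this gives
\[
D(t)\lesssim_{N,f}\|z_+\|_{\dot B^1_{\infty,1}}+\|z_-\|_{\dot B^1_{\infty,1}}+\|q_0\|_{\dot B^0_{\infty,1}}.
\]

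Second, I pass from the linear potential vorticity to the nonlinear one. By \eqref{eq:potential_vorticity}, $q=fq_0+fJ(u_2,b)$ with $q_0=N^2\partial_1u_2+f\partial_3 b$ in the appropriate normalization. Since $a=fu_2$, this reads $q_0$ equals a constant multiple of $q$ plus a constant multiple of $J(a,b)$, with constants depending on $f$ only. The triangle inequality in $\dot B^0_{\infty,1}$ then gives \eqref{eq:D_bound_general}. One caution: the paper's $q$ and $q_0$ differ by a factor $f$ and a rescaling $a=fu_2$, so the constants should be checked carefully, but they depend only on $N,f$.

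The main obstacle is justifying the multiplier bound on homogeneous Besov spaces. I would prove it as follows. For $m$ $0$-homogeneous and smooth on $\mathbb S^1$, write $\dot\Delta_j m(D)g=(\tilde\varphi(2^{-j}D)m(D))\dot\Delta_j g$. The kernel of $\tilde\varphi(2^{-j}\cdot)m$ is $2^{2j}K(2^j\cdot)$, where $K$ is the Schwartz inverse transform of $\tilde\varphi m\in\cC^\infty_c$. Its $L^1$ norm is therefore independent of $j$, and Young's inequality applies. The constants depend on $C^k$ norms of $m$ on the sphere, hence on powers of $\Omega^{-1}$. This is the source of the $\max/\min$ ratios in $\Lambda_{N,f}$. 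A second point to check is that the low-frequency singularity of $|\xi|^{-1}$ is harmless: the operators only ever appear in combinations $|\xi|^{-1}\cdot|\xi|$, so no homogeneity is lost.
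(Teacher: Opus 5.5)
Your proposal is correct and follows the paper's proof: you invert the linear change of variables to express $(U,a,b)$ through $z_\pm$ and $q_0=q-J(a,b)$, which is exactly what \eqref{eq:D_bound_change_variables} records (it is the inversion formula, not a multiplier bound). After taking a gradient, the symbols acting on $q$ and $J(a,b)$ are $0$-homogeneous and those acting on $z_\pm$ are $1$-homogeneous, and you conclude with multiplier bounds on homogeneous Besov spaces and $\dot B^0_{\infty,1}\hookrightarrow L^\infty$. The only slips are cosmetic: in the paper's normalization $q=N^2\de_1a+f^2\de_3b+J(a,b)$, so $q_0=q-J(a,b)$ with no extra factor $f$, and your displayed formula for $\hat a$ is missing a factor $f^2$ in front of $\xi_3$; neither affects the argument.
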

\begin{proof}
    After inverting the relations \eqref{eq:potential_vorticity} and \eqref{eq:eigenfunctions_real}, we can express $(U,a,b)$ in terms of $(q,z_+,z_-)$ as 
    \begin{equation}\label{eq:D_bound_change_variables}
    \begin{cases}
    U=\begin{bmatrix}
        -\de_3|\nabla|^{-1}(z_++z_-)\\
        \de_1|\nabla|^{-1}(z_++z_-)
    \end{bmatrix}, \\
    a=-\Omega^{-2}(D)\de_1|\nabla|^{-2}(q-J(a,b))-\I f^2\Omega^{-1}(D)\de_3|\nabla|^{-1}(z_+-z_-),\\
    b=-\Omega^{-2}(D)\de_3|\nabla|^{-2}(q-J(a,b))+\I N^2\Omega^{-1}(D)\de_1|\nabla|^{-1}(z_+-z_-).
    \end{cases}
    \end{equation}
    Taking the gradient of \eqref{eq:D_bound_change_variables} and grouping in terms of $z_\pm$, $q$, and $J(a,b)$ shows that all the Fourier multipliers involved are of order $0$ or $1$. 
    More precisely, $q$ and $J(a,b)$ carry the $0$-homogeneous ones, while the $1$-homogeneous ones are applied to $z_\pm$.
    We can then combine the triangle inequality with the embeddings $\dot B^{0}_{\infty,1}\hookrightarrow L^\infty$ and $\dot B^{1}_{\infty,1}\hookrightarrow \dot W^{1,\infty}$ to deduce \eqref{eq:D_bound_general}.
\end{proof}
In view of Proposition~\ref{prop:D_bound_general}, we will divide the proof of Proposition~\ref{prop:bootstrap} into two parts.
Since the initial datum is well-prepared, $q\equiv 0$ and the corresponding term on the right-hand side of \eqref{eq:D_bound_general} vanishes. 
Hence, we only have to control the oscillating variables $z_\pm$ and the nonlinear term $J(a,b)$.
While the latter can be handled in the same way in the $\eps^{-\frac43}$ and in the $\eps^{-2}$ cases (cf. Lemma~\ref{lemma:control_of_J}), the former requires different treatments, given in Sections~\ref{sec:proofof43} and~\ref{sec:proofof2}, respectively.

As pointed out in Remark~\ref{rmk:gamma_well_prepared}, the assumption on the initial potential vorticity can be relaxed.
In that case, the corresponding term on the right-hand side of \eqref{eq:D_bound_general} can be bounded by exploiting the propagation of smallness in Sobolev norms, which is the content of the next subsection.

\subsection{On the Sobolev norms of the potential vorticity} \label{subsec:PV}

As discussed in Section~\ref{subsec:about_the_theorems}, the potential vorticity
\begin{equation}
    q=N^2\de_1a+f^2\de_3b+J(a,b)
\end{equation}
is a key transported quantity for the system \eqref{eq:2D3C}, and the well-preparedness assumption \eqref{ass:wp} forces $q\equiv0$ at all times.
Although the conservation of $L^p$ norms is standard for transported quantities, the propagation of smallness in terms of Sobolev norms requires an additional argument.
Such a result would allow one to relax the well-preparedness assumption \eqref{ass:wp} to \eqref{ass:gamma_wp} for some $\gamma$ large enough.

As pointed out in \cite{XieBuhler2019,ShavitBuhlerShatah2026}, beside the potential vorticity, two further scalar quantities are transported by $U$.
Together, they define a change of coordinates $\Psi:\R^2\to\R^2$, given by
\begin{equation}
    \Psi(t,x):=\begin{bmatrix}
        x_1+f^{-2}a(t,x)\\
        x_3+N^{-2}b(t,x)
    \end{bmatrix},
\end{equation}
which satisfies the transport equation
\begin{equation}
    \left(\de_t+u_1\de_1+u_3\de_3\right)\Psi=0.
\end{equation}
Using these coordinates, we are prove the following result for any quantity $\cQ$ transported by $U$.
In our applications, $\cQ$ is the potential vorticity $q$.
\begin{proposition} \label{proposition:q_propagation}
    Fix $s>2$ and let $\cQ(t,x_1,x_3)$ be the solution of
    \begin{equation}
        (\de_t+u_1\de_1+u_3\de_3)\cQ=0,\quad\cQ|_{t=0}=\cQ_0\in H^s.
    \end{equation}
    Assume that there exists $T>0$ such that
    \begin{equation}
        \norm{(a,b)(t,\cdot)}_{H^s}\leq \eps_*\quad\forall t\in[0,T],
        \label{eq:assumption_invertibility}
    \end{equation}
    for some $\eps_*>0$ small enough (depending on $N,f$).
    Then, there exists a constant $C=C(\eps_*)>0$ such that, for all $ t\in[0,T]$,
    \begin{equation} \label{eq:bounds_Hs_Q}
        C^{-1}\norm{\cQ_0}_{H^s}\leq \norm{\cQ(t,\cdot)}_{H^s}\leq C\norm{\cQ_0}_{H^s}.
    \end{equation}
\end{proposition}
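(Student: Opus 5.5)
The idea is to use that $\Psi(t,\cdot)$ and $\cQ(t,\cdot)$ are both transported by the divergence-free field $U=(u_1,u_3)$. Since $\Psi(t,x)=x+(f^{-2}a,N^{-2}b)(t,x)$ is a perturbation of the identity, $\cQ(t,\cdot)$ should be expressible as a fixed, time-independent profile composed with $\Psi(t,\cdot)$. Let $X(t,\cdot)$ be the Lagrangian flow of $U$. Transport gives $\cQ(t,X(t,y))=\cQ_0(y)$ and $\Psi(t,X(t,y))=\Psi(0,y)$. Under \eqref{eq:assumption_invertibility}, with $\eps_*$ small relative to $\min\{f^2,N^2\}$, each $\Psi(t,\cdot)$ is a bi-Lipschitz $C^1$ diffeomorphism of $\R^2$. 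Indeed, $\nabla\Psi-I$ has $L^\infty$ norm at most $C_s\eps_*\max\{f^{-2},N^{-2}\}$ by $H^s\hookrightarrow W^{1,\infty}$ for $s>2$, so it is below $\frac12$. Hence $\Psi$ is injective, since $|\Psi(x)-\Psi(y)-(x-y)|\le\frac12|x-y|$. It is also proper and a local diffeomorphism, and therefore onto. We then define $G:=\cQ_0\circ\Psi(0,\cdot)^{-1}$, which satisfies the key identity
\begin{equation}
    \cQ(t,x)=G(\Psi(t,x)),\qquad \cQ_0=G\circ\Psi(0,\cdot).
\end{equation}
To verify it, note that both sides equal $\cQ_0(y)$ at $x=X(t,y)$, and $X(t,\cdot)$ is onto. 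Alternatively, $G\circ\Psi$ solves the same transport equation with the same data, so uniqueness applies.

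The proposition then reduces to a composition estimate. If $\Phi=\mathrm{id}+\phi$ with $\phi\in H^s$, $s>2$, and $\|\phi\|_{H^s}$ small, then
\begin{equation}
    C^{-1}\norm{G}_{H^s}\le\norm{G\circ\Phi}_{H^s}\le C\norm{G}_{H^s},
\end{equation}
with $C$ depending only on $s$ and the bound on $\|\phi\|_{H^s}$. We apply this twice: at time $t$ with $\phi=(f^{-2}a,N^{-2}b)(t)$ to get $\|\cQ(t)\|_{H^s}\sim\|G\|_{H^s}$, and at time $0$ to get $\|\cQ_0\|_{H^s}\sim\|G\|_{H^s}$. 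Chaining the two gives \eqref{eq:bounds_Hs_Q}.

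The main step is the composition estimate. For the $L^2$ part, change variables using $\det\nabla\Phi\in[\frac12,2]$. For integer $s$, the chain rule expresses $\nabla^k(G\circ\Phi)$ as a sum of $(\nabla^mG)\circ\Phi$ multiplied by products of derivatives of $\Phi$ of total order $k$. The top-order term $(\nabla^kG)\circ\Phi\,(\nabla\Phi)^{\otimes k}$ is handled by the change of variables together with $\nabla\Phi\in L^\infty$. The intermediate terms are handled by Gagliardo--Nirenberg/Moser estimates, with $\nabla\phi\in H^{s-1}\cap L^\infty$ and $\nabla^mG\circ\Phi\in L^p$, using $s-1>1$. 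For non-integer $s$, I would use the standard Besov/paraproduct composition lemma, for instance as in Bahouri--Chemin--Danchin, or interpolate between integer levels. Since this operation is nonlinear in $\Phi$ but linear in $G$, interpolation in $G$ at fixed $\Phi$ is legitimate.

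The lower bound follows by applying the same estimate to $\Phi^{-1}=\mathrm{id}+\tilde\phi$. This requires showing $\|\tilde\phi\|_{H^s}\lesssim\|\phi\|_{H^s}$, since $\tilde\phi=-\phi\circ\Phi^{-1}$. I expect this inverse-map bound to be the main obstacle. I would close it by a fixed-point/bootstrap argument: first bound $\|\nabla\tilde\phi\|_{L^\infty}$ from the Lipschitz bound, then use the tame composition estimate inductively in the derivative order. Smallness of $\eps_*$ is used only for invertibility and to keep all constants uniform.
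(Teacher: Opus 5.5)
Your proposal is correct and is essentially the paper's argument. Both write $\cQ(t)$ as a fixed profile composed with the transported near-identity coordinates $\Psi(t,\cdot)$, and both close with $H^s$ composition and inverse-map estimates for diffeomorphisms in $\mathrm{id}+H^s$; the paper cites these from Inci--Kappeler--Topalov (Lemmas B.5--B.6) rather than proving them. Your factorization through $G=\cQ_0\circ\Psi(0,\cdot)^{-1}$ avoids estimating $\Psi(0,\cdot)^{-1}\circ\Psi(t,\cdot)-\mathrm{id}$, and your bi-Lipschitz argument for global invertibility is more complete than the paper's check of the diagonal entries of $\nabla\Psi$.

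One correction for non-integer $s$: interpolating in $G$ between integer levels fails, because boundedness of $G\mapsto G\circ\Phi$ on $H^{\lceil s\rceil}$ would require $\phi\in H^{\lceil s\rceil}$. Instead, use the recursion $\nabla(G\circ\Phi)=(\nabla\Phi)^{T}\,(\nabla G)\circ\Phi$ together with the fact that $H^{s-1}$ ($s-1>1$) multiplies $H^{r}$ for $0\le r\le s-1$. At the bottom fractional order in $(0,1)$, the Gagliardo seminorm is preserved up to constants under bi-Lipschitz changes of variables, which closes the induction.
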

\begin{proof}
    Since both $\cQ$ and $\Psi$ are transported by the same velocity field $U$, denoting by $X_t:\R^2\to\R^2$ the associated flow map at time $t\in[0,T]$, we have
    \begin{equation} \label{eq:transported_quantities}
        \cQ(t,x)=\cQ_0(X_t^{-1}(x)),\quad\Psi(t,x)=\Psi_0(X_t^{-1}(x)),
    \end{equation}
    where $\Psi_0:=\Psi(0,\cdot)$.
    Note that, in view of assumption \eqref{eq:assumption_invertibility}, the two elements on the main diagonal of $\nabla\Psi(t,x)$ are nonzero. Indeed, one has
    \begin{equation}
        |(\nabla\Psi(t,x))_{1,1}|=|1+f^{-2}\de_1a(t,x)|\ge1-f^{-2}\norm{a}_{H^s}\ge1-f^{-2}\varepsilon_*>0,
    \end{equation}
    and, analogously,
    \begin{equation}
        |(\nabla\Psi(t,x))_{2,2}|=|1+N^{-2}\de_3b(t,x)|\ge1-N^{-2}\norm{b}_{H^s}\ge1-N^{-2}\varepsilon_*>0.
    \end{equation}
    In particular, we can invert the change of coordinates $\Psi$ at $t=0$.
    Combining this fact with \eqref{eq:transported_quantities}, we have
    \begin{equation} \label{eq:cQ_transported}
        \cQ(t,x)=\cQ_0(\Psi_0^{-1}(\Psi(t,x))).
    \end{equation}
    Appealing to \cite[Lemmas B.5-B.6]{IncKapTop2013}, assumption \eqref{eq:assumption_invertibility} also implies that
    \begin{equation}
        \norm{\Psi_0^{-1}\circ\Psi(t,\cdot)-\operatorname{Id}}_{H^s}\lesssim\varepsilon_*,
    \end{equation}
    therefore from \eqref{eq:cQ_transported} and \cite[Lemma B.5]{IncKapTop2013} again we get
    \begin{equation}
        \norm{\cQ(t,\cdot)}_{H^s}\lesssim\norm{\cQ_0}_{H^s}.
    \end{equation}
    Arguing analogously with
    \begin{equation}
        \cQ_0(x)=\cQ(t,X_t(x)),\quad\Psi_0(x)=\Psi(t,X_t(x)),
    \end{equation}
    in place of \eqref{eq:transported_quantities}, we also obtain the reverse inequality in \eqref{eq:bounds_Hs_Q}, hence concluding the proof.
\end{proof}

\section{Bootstrap in the generic regime} \label{sec:proofof43}

This section is devoted to proving Proposition~\ref{prop:bootstrap} with $T_\eps\sim_{N,f}\varepsilon^{-\frac43}$.
Bearing in mind Proposition~\ref{prop:D_bound_general}, we first turn our attention to the nonlinear term $J(a,b)$.
\begin{lemma} \label{lemma:control_of_J}
    Under the assumptions of Proposition~\ref{prop:bootstrap}, for $\eps>0$ sufficiently small, it holds
    \begin{equation}
        \int_0^{T_\eps}\norm{J(a,b)(t)}_{\dot B^0_{\infty,1}}\dd t\leq \frac{1}{4c}\log\left(\frac32\right).
    \end{equation}
\end{lemma}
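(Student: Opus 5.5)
The plan is to treat $J(a,b)$ as a purely quadratic term and bound it crudely in the energy space, with no use of dispersion. The available time $T_\eps\lesssim_{N,f}\eps^{-\frac43}$ is much shorter than $\eps^{-2}$, so a pointwise-in-time bound of order $\eps^2$ should suffice. Concretely, I aim to show
\begin{equation}
    \norm{J(a,b)(t)}_{\dot B^0_{\infty,1}}\lesssim_{N,f,s} E_s^2(t)\leq 2\eps^2,\qquad t\in[0,T_\eps],
\end{equation}
using the bootstrap assumption \eqref{eq:bootstrap_assumption}. Integrating in time then gives
\begin{equation}
    \int_0^{T_\eps}\norm{J(a,b)(t)}_{\dot B^0_{\infty,1}}\dd t\lesssim_{N,f,s}\eps^2T_\eps\lesssim_{N,f,s} c_0\Lambda_{N,f}\,\eps^{\frac23}.
\end{equation}
This is below $\frac1{4c}\log(\frac32)$ once $\eps\le\eps_0$ is small enough, or alternatively once $c_0$ is small.

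\textbf{Step 1: an embedding into $\dot B^0_{\infty,1}$.} I will show that $\norm{g}_{\dot B^0_{\infty,1}}\lesssim_\delta\norm{g}_{H^{1+\delta}}$ for any $\delta>0$. By Bernstein's inequality in two dimensions, $\norm{\dot\Delta_j g}_{L^\infty}\lesssim 2^{j}\norm{\dot\Delta_j g}_{L^2}$.
\begin{itemize}[label=--]
    \item For $j<0$, summing $2^j\norm{\dot\Delta_j g}_{L^2}\le 2^j\norm{g}_{L^2}$ gives a geometric series bounded by $\norm{g}_{L^2}$.
    \item For $j\ge0$, we write $2^j\norm{\dot\Delta_jg}_{L^2}=2^{-\delta j}\,2^{(1+\delta)j}\norm{\dot\Delta_jg}_{L^2}$ and apply Cauchy--Schwarz in $j$, which gives a bound by $\norm{g}_{H^{1+\delta}}$.
\end{itemize}
It is this low-frequency part that forces an $L^2$-based control of $J$ rather than only an $L^\infty$ one, and $L^2$ control is exactly what the $H^s$ energy provides.

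\textbf{Step 2: product estimate.} Since $s>4$, we have $s-1>1=\frac d2$, so $H^{s-1}(\R^2)$ is an algebra. Taking $\delta=s-2>0$ in Step 1, we obtain
\begin{equation}
    \norm{J(a,b)}_{\dot B^0_{\infty,1}}\lesssim\norm{J(a,b)}_{H^{s-1}}\lesssim\norm{\nabla a}_{H^{s-1}}\norm{\nabla b}_{H^{s-1}}\le\norm{a}_{H^s}\norm{b}_{H^s}\le |f||N|\,E_s^2.
\end{equation}
Here the last inequality uses the definition \eqref{eq:energy_defin} of $E_s$. One could instead use the divergence structure $J(a,b)=\de_1(a\de_3b)-\de_3(a\de_1b)$, but this is not needed.

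\textbf{Step 3: conclusion.} Inserting the bootstrap bound $E_s^2\le2\eps^2$ and integrating over $[0,T_\eps]$ with $T_\eps=c_0\Lambda_{N,f}\eps^{-\frac43}$ gives the claimed $O(\eps^{\frac23})$ bound.

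\textbf{Main obstacle.} The only genuinely delicate point is that the target space is the \emph{homogeneous} space $\dot B^0_{\infty,1}$, whose low frequencies are not controlled by $L^\infty$ norms alone. I expect this to be handled by the $L^2$ bound in Step 1, and I do not anticipate other difficulties; the $N,f$-dependence is simply absorbed into the constant.
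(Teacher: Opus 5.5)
Your proof is correct and is essentially the paper's: the paper also bounds $\norm{J(a,b)(t)}_{\dot B^0_{\infty,1}}\lesssim\norm{J(a,b)(t)}_{H^{s-1}}\lesssim\norm{a}_{H^s}\norm{b}_{H^s}\lesssim\eps^2$ using the algebra property of $H^{s-1}$ and the embedding $H^{s-1}\hookrightarrow\dot B^1_{2,1}\hookrightarrow\dot B^0_{\infty,1}$, which your Step~1 proves by hand via Bernstein and Cauchy--Schwarz, and then integrates over $[0,T_\eps]$ to get a bound of order $\eps^{\frac23}$. One remark: when the same lemma is reused in the $\eps^{-2}$ regime of Section~\ref{sec:proofof2}, this bound gives $O(\tilde c_0\tilde\Lambda_{N,f})$ rather than a positive power of $\eps$, so there the smallness must come from $\tilde c_0$, as your side comment about $c_0$ already anticipates.
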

\begin{proof}
    Let $s>2$. Using the embeddings $H^{s-1}\hookrightarrow\dot B^1_{2,1}\hookrightarrow\dot B^0_{\infty,1}$, the fact that $H^{s-1}$ is an algebra, and the bootstrap assumption \eqref{eq:bootstrap_assumption}, we obtain
    \begin{equation}
        \norm{J(a,b)(t)}_{\dot B^0_{\infty,1}}\lesssim \norm{J(a,b)(t)}_{\dot B^1_{2,1}} \lesssim \norm{a}_{H^s}\norm{b}_{H^s}\lesssim \eps^2.
    \end{equation}
    Integrating in time and choosing $\eps$ sufficiently small then concludes the proof.
\end{proof}
We now aim to control the oscillating variables $z_\pm$ in the $L^1_t(\dot B^1_{\infty,1})_x$-norm.
The estimate that we are going to derive relies on the dispersive estimate in Proposition~\ref{prop:general_dispersive_estimate}, and is contained in the following lemma.
\begin{lemma} \label{lemma:control_of_z_pm}
    Under the assumptions of Proposition~\ref{prop:bootstrap}, for $\eps>0$ sufficiently small it holds
    \begin{equation}
        \int_0^{T_\eps}\left(\|z_-(t)\|_{\dot B^1_{\infty,1}}+\|z_+(t)\|_{\dot B^1_{\infty,1}}\right)\dd t\leq \frac{1}{4c}\log\left(\frac32\right).
    \end{equation}
\end{lemma}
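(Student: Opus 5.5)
We will write the equations for $z_\pm$ in Duhamel form and feed both the linear part and the nonlinear part into the dispersive estimate of Proposition~\ref{prop:general_dispersive_estimate}. Since $q\equiv0$ under \eqref{ass:wp}, applying the definitions \eqref{eq:eigenfunctions_real} to the full system \eqref{eq:2D3C} gives
\begin{equation}
    \de_t z_\pm\mp\I\Omega(D)z_\pm=\cN_\pm,
\end{equation}
where $\cN_\pm$ is a quadratic expression in $(U,a,b)$. It is obtained by applying to the transport nonlinearities $U\cdot\nabla\omega$, $U\cdot\nabla a$, $U\cdot\nabla b$ the order $-1$ multipliers $\frac{1}{2}|\nabla|^{-1}$ and $\frac12|\nabla|^{-1}\Omega^{-1}(D)\de_{1,3}|\nabla|^{-1}$ up to bounded factors. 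Here one should note that the linear part of $\de_t\omega$ also produces $\de_3 a-\de_1 b$, while the extra term $\nabla^\perp\cdot$ of the $a$-equation is already accounted for. Duhamel's formula then reads
\begin{equation}
    z_\pm(t)=\e^{\pm\I\Omega(D)t}z_\pm(0)+\int_0^t\e^{\pm\I\Omega(D)(t-\tau)}\cN_\pm(\tau)\dd\tau.
\end{equation}

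\textbf{Linear part.} Proposition~\ref{prop:general_dispersive_estimate} with $s=1$, $q=1$ gives
\begin{equation}
    \norm{\e^{\pm\I\Omega(D)t}z_\pm(0)}_{\dot B^1_{\infty,1}}\lesssim\langle t\rangle^{-\frac12}\norm{z_\pm(0)}_{\dot B^3_{1,1}}.
\end{equation}
Since $z_\pm(0)$ is obtained from $(U,a,b)(0)$ by $0$-homogeneous multipliers (using the second representation of $z_\pm$), and $\Omega^{-1}$ is smooth on the circle with bounds depending on $N,f$, these multipliers are bounded on each Littlewood--Paley block in $L^1$. Hence $\norm{z_\pm(0)}_{\dot B^3_{1,1}}\lesssim_{N,f}\eps$, and the time integral up to $T_\eps$ is $\lesssim\eps T_\eps^{1/2}\sim\eps^{1/3}$, which is small.

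\textbf{Nonlinear part.} Applying the same estimate inside the integral gives
\begin{equation}
    \Big\|\int_0^t\e^{\pm\I\Omega(D)(t-\tau)}\cN_\pm(\tau)\dd\tau\Big\|_{\dot B^1_{\infty,1}}\lesssim\int_0^t\langle t-\tau\rangle^{-\frac12}\norm{\cN_\pm(\tau)}_{\dot B^3_{1,1}}\dd\tau.
\end{equation}
It therefore remains to show $\norm{\cN_\pm}_{\dot B^3_{1,1}}\lesssim_{N,f}\eps^2$ under the bootstrap assumption \eqref{eq:bootstrap_assumption}. Because $\cN_\pm$ is an order $-1$ (in places order $0$) multiplier applied to products such as $U\cdot\nabla\omega$, we expect
\begin{equation}
    \norm{\cN_\pm}_{\dot B^3_{1,1}}\lesssim\norm{U\cdot\nabla(\omega,a,b)}_{\dot B^2_{1,1}}+\dots
\end{equation}
This is the main obstacle, for two reasons. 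First, $\dot B^{\cdot}_{1,1}$ is $L^1$-based, so the product must be placed as $L^2\times L^2$ and gives no decay. Second, the low frequencies of $|\nabla|^{-1}$ must be handled. For the first point, we will use the product law $\norm{fg}_{\dot B^{2}_{1,1}}\lesssim\norm{f}_{H^{2+\delta}}\norm{g}_{H^{2+\delta}}+\norm{f}_{L^2}\norm{g}_{H^{2+\delta}}$ from Appendix~\ref{app:appendixBesov}. For the second, we will rewrite $U\cdot\nabla\omega=\nabla\cdot(U\omega)$ (valid since $\nabla\cdot U=0$) to gain a derivative that cancels $|\nabla|^{-1}$, and similarly for $a$ and $b$. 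Since $\omega$ costs one derivative, we need roughly $s>4$, matching the hypothesis, and obtain $\norm{\cN_\pm}_{\dot B^3_{1,1}}\lesssim_{N,f}E_s^2\lesssim\eps^2$. The $N,f$ dependence is tracked through the bounds on $\Omega^{-1}$, as in Remark~\ref{rmk:Lambda_receipt}.

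\textbf{Conclusion.} The nonlinear part is then $\lesssim\eps^2\int_0^t\langle t-\tau\rangle^{-1/2}\dd\tau\lesssim\eps^2\langle t\rangle^{1/2}$, whose time integral is $\lesssim\eps^2T_\eps^{3/2}=\eps^2(c_0\Lambda_{N,f})^{3/2}\eps^{-2}$. Choosing $c_0$ small makes this at most $\frac{1}{8c}\log\frac32$, and taking $\eps$ small makes the linear contribution $O(\eps^{1/3})$ at most $\frac1{8c}\log\frac32$ as well. Summing over $\pm$ with the constants adjusted yields the lemma.
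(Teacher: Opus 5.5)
Your proposal is correct and follows the paper's proof essentially step for step. Like the paper, you use Duhamel's formula for $z_\pm$, the dispersive estimate of Proposition~\ref{prop:general_dispersive_estimate} from $\dot B^3_{1,1}$ to $\dot B^1_{\infty,1}$, the divergence form of the nonlinearity together with the product estimate of Proposition~\ref{prop:product_estim_Besov} to get $\norm{\cN_\pm}_{\dot B^3_{1,1}}\lesssim_{N,f}\eps^2$, and the resulting bound $T_\eps^{1/2}\eps+T_\eps^{3/2}\eps^2$. One small slip: the multiplier acting on $U\cdot\nabla a$ and $U\cdot\nabla b$ is $\frac12\Omega^{-1}(D)\de_{1,3}|\nabla|^{-1}$, which is of order $0$ rather than $-1$, so what you actually need is $\norm{Ua}_{\dot B^4_{1,1}}+\norm{Ub}_{\dot B^4_{1,1}}\lesssim\norm{U}_{H^s}(\norm{a}_{H^s}+\norm{b}_{H^s})$, and this still closes precisely because $s>4$.
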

\begin{proof}
    Let $\sigma\in\{\pm\}$ and recall that, after the change of variables $(U,a,b)\mapsto(q,z_+,z_-)$, the oscillating variables $z_\sigma$ satisfy
    \begin{equation}
        \de_t z_\sigma=\I\sigma\Omega(D)z_\sigma+\cN\cL_\sigma,
        \label{eq:evolution_zeta_sigma}
    \end{equation}
    where
    \begin{equation}
        \mathcal{NL}_\sigma=-\frac{1}{2}\nabla\cdot\left(|\nabla|^{-1}U\omega+ \mathrm{i}\sigma\Omega^{-1}(D)|\nabla|^{-1}(\de_1(Ub)-\de_3(Ua))\right).
        \label{eq:nonlinear_term_sigma}
    \end{equation}
    Then, from the Duhamel formula
    \begin{equation}
        z_\sigma(t)=\e^{\I\sigma\Omega(D)t}z_\sigma(0)+\int_0^t\e^{\I\sigma\Omega(D)(t-\tau)}\cN\cL_\sigma(\tau)\dd \tau,
    \end{equation}
    we obtain, using the dispersive estimate \eqref{eq:general_dispersive_estimate},
    \begin{equation} \label{eq:step_43_1}
        \norm{z_\sigma(t)}_{\dot B^1_{\infty,1}}\lesssim_{N,f} \langle  t\rangle^{-\frac12}\norm{z_\sigma(0)}_{\dot B^3_{1,1}}+\int_0^t\langle t-\tau\rangle^{-\frac12}\norm{\cN\cL_\sigma(\tau)}_{\dot B^3_{1,1}}\dd \tau.
    \end{equation}
    To bound the nonlinear terms, we use that $s>4$ and we combine Proposition~\ref{prop:product_estim_Besov} with the bootstrap assumption \eqref{eq:bootstrap_assumption} to get
    \begin{align}
    \begin{aligned}\label{eq:control_quadratic_nonlinearity}
        \norm{\cN\cL_\sigma}_{\dot B^3_{1,1}}&\lesssim _{N,f}\norm{U\omega}_{\dot B^3_{1,1}}+\norm{Ua}_{\dot B^4_{1,1}}+\norm{Ub}_{\dot B^4_{1,1}}\\
        &\lesssim_{N,f}\norm{U}_{H^s}(\norm{\omega}_{H^{s-1}}+\norm{a}_{H^s}+\norm{b}_{H^s})\\
        &\lesssim_{N,f}\eps^2.
    \end{aligned}
    \end{align}
    Since $z_\sigma$ has the same regularity level as $U,a,$ and $b$, we can use \eqref{eq:thm1_assumption_ic_0} and Corollary~\ref{coroll:0homo_multi} to bound the initial datum on the right-hand side of \eqref{eq:step_43_1}.\footnote{Up to zero order multipliers which are bounded on homogeneous Besov spaces, see Corollary~\ref{coroll:0homo_multi} and Proposition~\ref{prop:omega-1_bound}.} 
    Combining this with the estimate above, we obtain
    \begin{equation} \label{eq:estimate_to_be_improved_z_sigma}
        \int_0^{T_\eps}\norm{z_\sigma(t)}_{\dot B^1_{\infty,1}}\dd t\lesssim_{N,f}\int_0^{T_\eps}\langle  t\rangle^{-\frac12}\eps\dd t+\int_0^{T_\eps}\int_0^t\langle t-\tau\rangle^{-\frac12}\eps^2\dd \tau\dd t\lesssim_{N,f} T_\eps^\frac12 \eps + T_\eps^{\frac32}\eps^2.
    \end{equation}
    Since $T_\eps\lesssim_{N,f}\eps^{-\frac43}$, choosing $\eps$  and $c_0$ small enough concludes the proof.
\end{proof}
We are now in a position to prove Proposition~\ref{prop:bootstrap}.
\begin{proof}[Proof of Proposition~\ref{prop:bootstrap}]
    Combining Lemmas~\ref{lemma:control_of_J} and~\ref{lemma:control_of_z_pm}, and using that $q\equiv0$, since the initial datum for \eqref{eq:2D3C} is well-prepared, \eqref{eq:proof_of_bootstrap_bu} follows.
    An application of the blow-up criterion in Proposition~\ref{prop:blow_up_criterion} then concludes the proof.
\end{proof}

\section{Bootstrap in the non-resonant regime} \label{sec:proofof2}

In this section we prove Proposition~\ref{prop:bootstrap} with $T_\eps\sim_{N,f}\varepsilon^{-2}$.
To do so, we estimate the oscillating variables $z_\pm$ more carefully, since Lemma~\ref{lemma:control_of_z_pm} is the bottleneck that restricts the argument of Section~\ref{sec:proofof43} to the $\varepsilon^{-\frac43}$ timescale.
Note that the nonlinear term $J(a,b)$ can be estimated analogously to Lemma~\ref{lemma:control_of_J}.
To this end, we use a normal form method \cite{Ger10}, which applies to the specific regime
\begin{equation}\label{eq:normal_form_regime}
    2\min\{|N|,|f|\}>\max\{|N|,|f|\},
\end{equation}
and allows us to improve the stability timescale for solutions to \eqref{eq:2D3C} from $T_\eps\sim_{N,f}\eps^{-\frac43}$ to $T_\eps\sim_{N,f}\eps^{-2}$.
This hinges upon the following observation.
A closer inspection of the dispersion relation defined in \eqref{eq:dispersion_relation_fourier}, shows that, for fixed $N,f\neq 0$  with $|N|\neq |f|$, the symbol $\Omega$ satisfies
\begin{equation}
    0<\min\{|N|,|f|\}\leq\Omega(\xi)\leq \max\{|N|,|f|\},\qquad \forall \xi \in \R^2.
\end{equation}
Throughout the rest of this section, we assume \eqref{eq:normal_form_regime} unless stated otherwise.

We now state the result that, in the normal form argument, will allow us to integrate by parts in time and obtain a cubic nonlinearity from the quadratic one.
Setting $\delta_*:=2\min\{|N|,|f|\}-\max\{|N|,|f|\}$, in the regime \eqref{eq:normal_form_regime} the phase
\begin{equation}
    \Phi_{\sigma,\nu\mu}(\xi,\eta)=-\sigma \Omega(\xi)+\nu\Omega(\xi-\eta)+\mu\Omega(\eta), \qquad \sigma,\nu,\mu\in\{\pm\},
    \label{eq:Phi_hat}
\end{equation}
is non-resonant, i.e. bounded away from zero uniformly in $(\xi,\eta)$.
\begin{lemma} \label{lemma:alg_Phi}
    Let $N,f\ne0$ satisfy $N^2\neq f^2$ and \eqref{eq:normal_form_regime}.
    Then, for all $\xi,\eta\in\R^2$ and all $\sigma,\nu,\mu\in\{\pm\}$ it holds 
    \begin{equation}
        |\Phi_{\sigma,\nu\mu}(\xi,\eta)|\geq\delta_*.
        \label{eq:algebra_lemma_th}
    \end{equation}
\end{lemma}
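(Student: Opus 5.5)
The plan is to use only the pointwise bounds $m:=\min\{|N|,|f|\}\le\Omega\le M:=\max\{|N|,|f|\}$, recorded just before the statement, together with a case analysis on the signs $(\sigma,\nu,\mu)$. No information on the angular structure of $\Omega$ is needed. Since $\Omega$ is $0$-homogeneous it is undefined at the origin, so we treat $\xi,\eta,\xi-\eta\neq0$, where the bound is meant. Write $A=\Omega(\xi)$, $B=\Omega(\xi-\eta)$ and $C=\Omega(\eta)$, all lying in $[m,M]$. Then $\Phi_{\sigma,\nu\mu}=-\sigma A+\nu B+\mu C$ is a signed sum of three numbers in $[m,M]$.

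\emph{Step 1: reduction by symmetry.} Replacing $(\sigma,\nu,\mu)$ by $(-\sigma,-\nu,-\mu)$ only flips the sign of $\Phi$, so $|\Phi|$ is unchanged. Hence, among the three coefficients $-\sigma,\nu,\mu$ we may assume at least two are $+1$. Two cases remain: all three coefficients are $+$, or exactly two are $+$ and one is $-$.

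\emph{Step 2: all signs equal.} Here $|\Phi|=A+B+C\ge 3m$. Since $3m\ge 2m-M=\delta_*$, the bound holds trivially in this case.

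\emph{Step 3: mixed signs.} Here $|\Phi|$ has the form $|X+Y-Z|$ with $X,Y,Z\in[m,M]$, and
\begin{equation}
X+Y-Z\ge 2m-M=\delta_* .
\end{equation}
This quantity is positive precisely under \eqref{eq:normal_form_regime}. Therefore $|\Phi|\ge\delta_*$ in every case, which is \eqref{eq:algebra_lemma_th}.

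The only step needing care is the bookkeeping in Step 1. One must check that every sign pattern is covered, including patterns in which $\sigma$ enters with the opposite sign from $\nu$ and $\mu$, so that the minimal configuration is always ``two terms at least $m$ minus one term at most $M$''. Beyond that there is no genuine obstacle; the hypothesis $N^2\ne f^2$ plays no role here except to keep the setting consistent with the rest of the paper.
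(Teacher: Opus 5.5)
Your proof is correct and is essentially the paper's argument. Multiplying $\Phi_{\sigma,\nu\mu}$ by the majority sign $\kappa=\operatorname{sgn}(-\sigma+\nu+\mu)$, as the paper does, is exactly your global sign-flip reduction. After that, the bounds $\min\{|N|,|f|\}\le\Omega\le\max\{|N|,|f|\}$ are applied in the same two cases: all three signs equal, giving at least $3\min\{|N|,|f|\}$, and one sign differing, giving at least $\delta_*$.
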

\begin{proof}
    For any given triple $(-\sigma,\nu,\mu)\in\{\pm\}^3$, let $\kappa\in\{\pm\}$ denote the most frequent sign of the three, equivalently characterized as
    \begin{equation}
        \kappa=\operatorname{sgn}(-\sigma+\nu+\mu).
        \label{eq:def_of_kappa}
    \end{equation}
    If $|-\sigma+\nu+\mu|=3$ (i.e. all three signs coincide), we have
    \begin{equation}
        \kappa\Phi_{\sigma,\nu\mu}(\xi,\eta)= \Omega(\xi)+\Omega(\xi-\eta)+\Omega(\eta)\ge3\min\{|N|,|f|\}\ge\delta_*.
    \end{equation}
    On the other hand, if $|-\sigma+\nu+\mu|=1$ (i.e. exactly one sign differs from the other two), we have three possibilities:
    \begin{equation}
        \kappa\Phi_{\sigma,\nu\mu}(\xi,\eta)=\begin{cases}
            -\Omega(\xi)+\Omega(\xi-\eta)+\Omega(\eta),\quad&\text{if }\;\nu=\mu,\\
            \Omega(\xi)-\Omega(\xi-\eta)+\Omega(\eta),\quad&\text{if }\;-\sigma=\mu,\\
            \Omega(\xi)+\Omega(\xi-\eta)-\Omega(\eta),\quad&\text{if }\;-\sigma=\nu.
        \end{cases}
    \end{equation}
    In each of these three cases, the bound $\kappa\Phi_{\sigma,\nu\mu}(\xi,\eta)\ge\delta_*>0$ holds.
    The proof is then complete since
    \begin{equation}
        |\Phi_{\sigma,\nu\mu}(\xi,\eta)|=|\kappa\Phi_{\sigma,\nu\mu}(\xi,\eta)|=\kappa\Phi_{\sigma,\nu\mu}(\xi,\eta)\ge\delta_*.
    \end{equation}
\end{proof}
Note that, in the regime \eqref{eq:normal_form_regime}, we have $\kappa=\sign{\Phi_{\sigma,\nu\mu}}$.

\subsection{Normal form}
We now introduce the change of variables that allows us to apply the normal form approach.
We begin by observing that, for $\sigma\in\{\pm\}$, there exist finitely many Fourier multipliers homogeneous of order zero $\{\cA^{0,j}_{\sigma,\nu\mu},\cA^{1,j}_{\sigma,\nu\mu}\}_{\nu,\mu\in\{\pm\},j\in\{1,\dots,\ell\}}$, with $\ell\in\NN$ fixed, and Fourier multipliers homogeneous of order one $\{\cA^{2,j}_{\sigma,\nu\mu}\}_{\nu,\mu\in\{\pm\},j\in\{1,\dots,\ell\}}$ such that the nonlinear term $\mathcal{NL}_\sigma$ can be expressed in $(q,z_+,z_-)$ variables as
\begin{equation}
    \mathcal{NL}_\sigma=\sum_{j=1}^\ell\sum_{\nu,\mu\in\{\pm\}} \cA^{0,j}_{\sigma,\nu\mu}(D)\left(\cA^{1,j}_{\sigma,\nu\mu}(D)z_\nu\cA^{2,j}_{\sigma,\nu\mu}(D)z_\mu\right)+\mathcal E_\sigma.
\end{equation}
Indeed, combining the explicit form in \eqref{eq:nonlinear_term_sigma} with the change of variables \eqref{eq:normal_form_change_of_var} shows that all multipliers $\cA$ are a finite product of operators in the family $\{\Omega,\Omega^{-1}, \de_1,\de_3, \frac{\de_1}{|\nabla|},\frac{\de_3}{|\nabla|}\}$, see for instance the explicit computations in \cite{ShavitBuhlerShatah2026}.
In the above, $\mathcal{E}_\sigma$ collects all the nonlinear interactions that do not occur between the wave modes $z_\pm$. Recalling that in our setting the potential vorticity $q\equiv0$, it reads
\begin{equation} \label{eq:def_E}
    \cE_\sigma=\frac{\mathrm{i}\sigma }{2}\nabla\cdot\Omega^{-1}(D)|\nabla|^{-1}\bigg(\de_3(U\de_1\Omega^{-2}(D)|\nabla|^{-2}J(a,b))-\de_1(U\de_3\Omega^{-2}(D)|\nabla|^{-2}J(a,b))\bigg).
\end{equation}
For any $\sigma,\nu,\mu\in\{\pm\}$, let us define the following pseudo-product operators
\begin{equation} \label{eq:def_B}
    \cB_{\sigma,\nu\mu}(g,h)(x):=\frac{1}{4\pi^2}\cF^{-1}\left(\int_{\R^2}\frac{ m_{\sigma,\nu\mu}(\xi,\eta)}{\I\Phi_{\sigma,\nu\mu}(\xi,\eta)}\hat g(\xi-\eta)\hat h(\eta)\dd \eta \right)(x),\quad g,h\in\cC^\infty_c(\R^2),
\end{equation}
where $\Phi_{\sigma,\nu\mu}$ is given in \eqref{eq:Phi_hat} and the symbols $m_{\sigma,\nu\mu}$ are
\begin{equation} \label{eq:definition_m_symbols}
    m_{\sigma,\nu\mu}(\xi,\eta)=\sum_{j=1}^\ell \cA^{0,j}_{\sigma,\nu\mu}(\xi)\cA^{1,j}_{\sigma,\nu\mu}(\xi-\eta)\cA^{2,j}_{\sigma,\nu\mu}(\eta),\quad\sigma,\nu,\mu\in\{\pm\}.
\end{equation}
With the above notation, the normal form transformation yields the following lemma.
\begin{lemma} \label{lemma:expression_for_z_sigma}
    Fix $\sigma\in\{\pm\}$.
    The diagonalized variable $z_\sigma$ defined in \eqref{eq:eigenfunctions_real} can be expressed as
    \begin{align}
        z_\sigma(t)&=\e^{\I\sigma\Omega(D)t}\bigg[z_\sigma(0)-\sum_{\nu,\mu\in\{\pm\}}\cB_{\sigma,\nu\mu}(z_\nu,z_\mu)(0)\bigg]+\sum_{\nu,\mu\in\{\pm\}}\cB_{\sigma,\nu\mu}(z_\nu,z_\mu)(t)\\
        &\quad - \sum_{\nu,\mu\in\{\pm\}}\int_0^t\e^{\I\sigma\Omega(D)(t-\tau)}\bigg[\cB_{\sigma,\nu\mu}(\mathcal{NL}_\nu,z_\mu)(\tau)+\cB_{\sigma,\nu\mu}(z_\nu,\mathcal{NL}_\mu)(\tau)-\frac14\cE_\sigma(\tau)\bigg]\dd\tau,
\end{align}
where $\cB_{\sigma,\nu\mu}$, $\cN\cL_\sigma$, and $\cE_\sigma$ are defined respectively in \eqref{eq:def_B}, \eqref{eq:nonlinear_term_sigma}, and \eqref{eq:def_E}.
\end{lemma}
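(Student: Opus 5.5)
The plan is the standard normal form computation on the profile side. Set $f_\sigma(t):=\e^{-\I\sigma\Omega(D)t}z_\sigma(t)$. By \eqref{eq:evolution_zeta_sigma}, $\de_t f_\sigma=\e^{-\I\sigma\Omega(D)t}\cN\cL_\sigma$. We insert the decomposition of $\cN\cL_\sigma$ into the quadratic wave-wave part plus $\cE_\sigma$. In Fourier variables we write $\hat z_\nu(\xi-\eta)=\e^{\I\nu\Omega(\xi-\eta)\tau}\hat f_\nu(\xi-\eta)$ and $\hat z_\mu(\eta)=\e^{\I\mu\Omega(\eta)\tau}\hat f_\mu(\eta)$. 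The quadratic part of $\de_\tau\hat f_\sigma(\xi)$ then becomes
\begin{equation}
\sum_{\nu,\mu}\frac{1}{4\pi^2}\int_{\R^2}\e^{\I\tau\Phi_{\sigma,\nu\mu}(\xi,\eta)}m_{\sigma,\nu\mu}(\xi,\eta)\hat f_\nu(\xi-\eta)\hat f_\mu(\eta)\dd\eta,
\end{equation}
with $m_{\sigma,\nu\mu}$ as in \eqref{eq:definition_m_symbols}. Here we must keep track of the conventions: the Fourier transform of a product carries a factor $\frac{1}{4\pi^2}$ in front of the convolution, which matches the normalization in \eqref{eq:def_B}.

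Next we integrate $\de_\tau \hat f_\sigma$ over $[0,t]$ and integrate by parts in $\tau$ in the quadratic term, using $\e^{\I\tau\Phi}=\de_\tau\e^{\I\tau\Phi}/(\I\Phi)$. This is legitimate because Lemma~\ref{lemma:alg_Phi} gives $|\Phi_{\sigma,\nu\mu}|\ge\delta_*>0$ uniformly, so the symbol $m/(\I\Phi)$ is well defined and the integrals converge for Schwartz-class (or $H^s$) data. The integration by parts produces three kinds of terms.
\begin{itemize}
\item The boundary terms are $\e^{-\I\sigma\Omega(D)\tau}\cB_{\sigma,\nu\mu}(z_\nu,z_\mu)(\tau)$ evaluated at $\tau=t$ and $\tau=0$. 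This follows after re-expressing $\e^{\I\tau(\nu\Omega(\xi-\eta)+\mu\Omega(\eta))}\hat f_\nu\hat f_\mu=\hat z_\nu\hat z_\mu$, with the remaining factor $\e^{-\I\sigma\Omega(\xi)\tau}$ pulled out.
\item The remaining bulk term is minus the integral of $\e^{\I\tau\Phi}\frac{m}{\I\Phi}\de_\tau(\hat f_\nu\hat f_\mu)$.
\item By the Leibniz rule and $\de_\tau f_\nu=\e^{-\I\nu\Omega(D)\tau}\cN\cL_\nu$, the bulk term becomes $-\e^{-\I\sigma\Omega(D)\tau}[\cB_{\sigma,\nu\mu}(\cN\cL_\nu,z_\mu)+\cB_{\sigma,\nu\mu}(z_\nu,\cN\cL_\mu)]$.
\end{itemize}
The $\cE_\sigma$ part is not integrated by parts. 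It simply remains as $\int_0^t\e^{-\I\sigma\Omega(D)\tau}\cE_\sigma\dd\tau$.

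Finally we apply $\e^{\I\sigma\Omega(D)t}$ to the resulting identity for $f_\sigma(t)-f_\sigma(0)$. This gives the stated formula: the $\cB(\cdot)(0)$ term inside the bracket, $+\cB(\cdot)(t)$ outside, and the Duhamel integral. The Duhamel term containing $\cE_\sigma$ appears with the coefficient recorded in the statement. The factor $\frac14$ and the sign must be reconciled with how $\cE_\sigma$ was normalized in \eqref{eq:def_E} relative to $\cN\cL_\sigma$, through the $\frac12$ prefactors there and in \eqref{eq:eigenfunctions_real}. I expect this bookkeeping to be the main (if mundane) obstacle. It requires carefully redoing the substitution \eqref{eq:D_bound_change_variables} with $q\equiv0$ into \eqref{eq:nonlinear_term_sigma}, to confirm exactly which multiple of $\cE_\sigma$ the non-wave part of $\cN\cL_\sigma$ is. I would isolate the $J(a,b)$ contributions in $a$ and $b$ and track the constants there first.

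A minor technical point is justifying the time integration by parts for $H^s$ solutions rather than smooth compactly supported data. I would do this by density, or by noting that every expression is a continuous bilinear form on $\cC([0,T];H^s)$. The boundedness comes from the Laplace representation \eqref{eq:integral_Laplace} together with Lemma~\ref{lemma:alg_Phi}.
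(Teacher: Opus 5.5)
Your proposal is correct and is essentially the paper's proof. You pass to the profile $\e^{-\I\sigma\Omega(D)t}z_\sigma$ and write the wave--wave part in Fourier variables with phase $\e^{\I\tau\Phi_{\sigma,\nu\mu}}$. You then integrate by parts in time using Lemma~\ref{lemma:alg_Phi}, recover the boundary terms $\cB_{\sigma,\nu\mu}(z_\nu,z_\mu)$ and the bulk terms via the profile equation, and leave $\cE_\sigma$ untouched.

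The bookkeeping you flag as the main obstacle is not an issue. The decomposition of $\cN\cL_\sigma$ stated just before the lemma already carries $\cE_\sigma$ with coefficient one. The $\frac14$ only compensates for the four-term sum, since $-\sum_{\nu,\mu\in\{\pm\}}\bigl(-\tfrac14\cE_\sigma\bigr)=\cE_\sigma$. So your term $\int_0^t\e^{\I\sigma\Omega(D)(t-\tau)}\cE_\sigma\dd\tau$ is exactly what the statement records.
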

\begin{proof}
    Fix $\sigma\in\{\pm\}$ and consider the evolution equation \eqref{eq:evolution_zeta_sigma} for $z_\sigma$. 
    The profile
    \begin{equation}
        y_\sigma(t):=\e^{-\I\sigma\Omega(D)t}z_\sigma(t)
    \end{equation}
    satisfies
    \begin{equation}
        \de_t y_\sigma(t)=\e^{-\I\sigma\Omega(D)t}\mathcal{NL}_\sigma.
        \label{eq:normal_form_change_of_var}
    \end{equation}
    Taking the Fourier transform of \eqref{eq:normal_form_change_of_var} and expressing the nonlinear term $\cN\cL_\sigma$ in terms of the profiles $\hat y_\nu,\hat y_\mu$, we obtain
    \begin{equation}
        \de_t\hat y_\sigma(t,\xi)=\frac{1}{4\pi^2}\sum_{\nu,\mu\in\{\pm\}}\int_{\R^2}\e^{\I\Phi_{\sigma,\nu\mu}(\xi,\eta)t}m_{\sigma,\nu\mu}(\xi,\eta)\hat y_\nu(t,\xi-\eta)\hat y_\mu(t,\eta)\dd \eta+\e^{-\I\sigma\Omega(\xi)t}\hat{\mathcal{E}}_\sigma(t,\xi),
        \label{eq:normal_form_step0}
    \end{equation}
    In light of Lemma~\ref{lemma:alg_Phi}, we can then use the identity 
    \begin{equation}
        \e^{\I\Phi_{\sigma,\nu\mu}(\xi,\eta) \tau}=\frac1{\I\Phi_{\sigma,\nu\mu}(\xi,\eta)}\frac{\dd}{\dd\tau}\e^{\I\Phi_{\sigma,\nu\mu}(\xi,\eta)\tau}
    \end{equation}
    to obtain from \eqref{eq:normal_form_step0} that
    \begin{align}
        \hat y_\sigma(t,\xi)&=\hat y_\sigma(0,\xi)+\int_0^t\e^{-\I\sigma\Omega(\xi)\tau}\hat{\mathcal{E}}_\sigma(\tau,\xi)\dd\tau\\
        &\quad+\frac{1}{4\pi^2}\sum_{\nu,\mu\in\{\pm\}}\int_0^t\int_{\R^2}\frac1{\I\Phi_{\sigma,\nu\mu}(\xi,\eta)}\frac{\dd}{\dd\tau}\e^{\I\Phi_{\sigma,\nu\mu}(\xi,\eta)\tau} m_{\sigma,\nu\mu}(\xi,\eta)\hat y_\nu(\tau,\xi-\eta)\hat y_\mu(\tau,\eta)\dd\eta\dd\tau.
    \end{align}
    Integrating the last term on the right-hand-side by parts in time, we get
    \begin{align}
    \begin{split}
        \hat y_\sigma(t,\xi)&=\hat y_\sigma(0,\xi)+\int_0^t\e^{-\I\sigma\Omega(\xi)\tau}\hat{\mathcal{E}}_\sigma(\tau,\xi)\dd \tau\\
        &\quad+\frac{1}{4\pi^2}\sum_{\nu,\mu\in\{\pm\}}\left[\int_{\R^2}\frac1{\I\Phi_{\sigma,\nu\mu}(\xi,\eta)}\e^{\I\Phi_{\sigma,\nu\mu}(\xi,\eta) \tau} m_{\sigma,\nu\mu}(\xi,\eta)\hat y_\nu(\tau,\xi-\eta)\hat y_\mu(\tau,\eta)\dd \eta \right]^{\tau=t}_{\tau=0}\\
        &\quad -\frac{1}{4\pi^2}\sum_{\nu,\mu\in\{\pm\}}\int_0^t\int_{\R^2}\frac1{\I\Phi_{\sigma,\nu\mu}(\xi,\eta)}\e^{\I\Phi_{\sigma,\nu\mu}(\xi,\eta) \tau} m_{\sigma,\nu\mu}(\xi,\eta)\frac{\dd}{\dd \tau}\left(\hat y_\nu(\tau,\xi-\eta)\hat y_\mu(\tau,\eta)\right)\dd \eta\dd \tau.
        \label{eq:normal_form_step1}
    \end{split}
    \end{align}
    Observe that
    \begin{equation}
        \e^{\I\Phi_{\sigma,\nu\mu}(\xi,\eta) \tau}\hat y_\nu(\tau,\xi-\eta)\hat y_\mu(\tau,\eta)=\e^{-\I\sigma\Omega(\xi)\tau}\hat z_\nu(\tau,\xi-\eta)\hat z_\mu(\tau,\eta),
    \end{equation}
    and that
    \begin{align}
        \frac{\dd}{\dd \tau}\bigg(\hspace{-1pt}\hat y_\nu(\tau,\xi-\eta)\hat y_\mu(\tau,\eta)\hspace{-1pt}\bigg)&=\frac{\dd}{\dd \tau}\bigg(\e^{-\I(\nu\Omega(\xi-\eta)+\mu\Omega(\eta))\tau}\hat z_\nu(\tau,\xi-\eta)\hat z_\mu(\tau,\eta)\bigg)\\
        &=\e^{-\I(\nu\Omega(\xi-\eta)+\mu\Omega(\eta))\tau}\hspace{-2pt}\left({\widehat{\mathcal{NL}}_\nu}(\tau,\xi-\eta)\hat z_\mu(\tau,\eta)+\hat z_\nu(\tau,\xi-\eta){\widehat{\mathcal{NL}}_\mu}(\tau,\eta)\right)\hspace{-2pt}.
    \end{align}
    These two properties, together with \eqref{eq:normal_form_step1}, conclude the proof.
\end{proof}
Given the expression for $z_\sigma$ in Lemma~\ref{lemma:expression_for_z_sigma}, a direct application of the linear dispersive estimate of Proposition~\ref{prop:general_dispersive_estimate} gives the following bound in homogeneous Besov spaces
\begin{equation}\label{eq:estimate_Besov_thm2}
\begin{split}
    \|z_\sigma(t)\|_{\dot B^1_{\infty,1}}&\lesssim\langle t\rangle^{-\frac12}\left(\norm{z_\sigma(0)}_{\dot B^3_{1,1}}+\sum_{\nu,\mu\in\{\pm\}}\norm{\cB_{\sigma,\nu\mu}(z_\nu,z_\mu)(0)}_{\dot B^3_{1,1}}\right)\\
    &\quad+\sum_{\nu,\mu\in\{\pm\}}\norm{\cB_{\sigma,\nu\mu}(z_\nu,z_\mu)(t)}_{\dot B^1_{\infty,1}}+\int_0^t\langle t-\tau\rangle^{-\frac12}\norm{\cE_\sigma(\tau)}_{\dot B^3_{1,1}}\dd\tau\\
    &\quad+\sum_{\nu,\mu\in\{\pm\}}\int_0^t\langle t-\tau\rangle^{-\frac12}\left(\norm{\cB_{\sigma,\nu\mu}(\mathcal{NL}_\nu,z_\mu)(\tau)}_{\dot B^3_{1,1}}+\norm{\cB_{\sigma,\nu\mu}(z_\nu,\mathcal{NL}_\mu)(\tau)}_{\dot B^3_{1,1}}\right)\dd \tau.
\end{split}
\end{equation}
It is now clear that, in order to improve the estimate \eqref{eq:estimate_to_be_improved_z_sigma} in the bootstrap argument, we need the pseudo-product operators $\cB_{\sigma,\nu\mu}$ to be bounded in terms of the Sobolev norms of their arguments.
This is the content of Lemma~\ref{lemma:estimate_pseudo_product} below, which is in the spirit of the product estimate in homogeneous Besov spaces of Proposition~\ref{prop:product_estim_Besov}.
Note that, compared with the estimate \eqref{eq:product_estim_Besov} for the plain product, the regularity required on the right-hand side of \eqref{eq:estimate_pseudo_product} is higher by one.
This comes from the fact that the multipliers $\{\cA^{2,j}_{\sigma,\nu\mu}\}$ appearing in the definition \eqref{eq:definition_m_symbols}  of $m_{\sigma,\nu\mu}$ are of order one.
\begin{lemma} \label{lemma:estimate_pseudo_product}
    Fix $k\in\NN\setminus\{0\}$.
    For all $g,h\in H^{s_0}$ with $s_0>k+1$, it holds
    \begin{equation} \label{eq:estimate_pseudo_product}
        \|\cB_{\sigma,\nu\mu}(g,h)\|_{\dot B^k_{1,1}} \lesssim_{N,f}\norm{g}_{H^{s_0}}\norm{h}_{H^{s_0}}.
    \end{equation}
\end{lemma}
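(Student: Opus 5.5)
The plan is to use the Laplace-type representation \eqref{eq:integral_Laplace} to factorize the non-resonant denominator, so that $\cB_{\sigma,\nu\mu}$ becomes an integral in $\rho$ of ordinary products of Fourier multipliers applied to $g$ and $h$. Write $\kappa=\sign\Phi_{\sigma,\nu\mu}$, which by Lemma~\ref{lemma:alg_Phi} and the remark following it is a constant sign for each fixed triple. Then
\begin{equation}
    \frac{1}{\Phi_{\sigma,\nu\mu}(\xi,\eta)}=\kappa\int_0^\infty \e^{-\kappa\Phi_{\sigma,\nu\mu}(\xi,\eta)\rho}\dd\rho ,
\end{equation}
and $\e^{-\kappa\Phi\rho}$ splits as $\e^{\kappa\sigma\Omega(\xi)\rho}\e^{-\kappa\nu\Omega(\xi-\eta)\rho}\e^{-\kappa\mu\Omega(\eta)\rho}$. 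Inserting this together with \eqref{eq:definition_m_symbols} into \eqref{eq:def_B}, and using Fubini (justified for $g,h\in\cC^\infty_c$ since the integrand decays like $\e^{-\delta_*\rho}$ uniformly in the frequencies), we obtain
\begin{equation}
    \cB_{\sigma,\nu\mu}(g,h)=-\I\kappa\sum_{j=1}^\ell\int_0^\infty \e^{\kappa\sigma\rho\Omega(D)}\cA^{0,j}(D)\Big(\e^{-\kappa\nu\rho\Omega(D)}\cA^{1,j}(D)g\cdot \e^{-\kappa\mu\rho\Omega(D)}\cA^{2,j}(D)h\Big)\dd\rho .
\end{equation}
Each factor $\e^{\pm\rho\Omega(D)}$ is a real exponential of a $0$-homogeneous multiplier, and individually it may grow like $\e^{\rho\max\{|N|,|f|\}}$. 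The point is that the product of the three symbols is bounded by $\e^{-\delta_*\rho}$ pointwise, but once the factors are split, the operator norms do not multiply to something decaying.

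This is the main obstacle. The individual operator norms of the three exponentials on $\dot B^k_{1,1}$ and $L^2$ should be controlled by Corollary~\ref{coroll:0homo_multi}, using Mikhlin-type derivative bounds on $\e^{\pm\rho\Omega(\xi)}$ on the unit annulus. These are of size $\mathrm{poly}(\rho)\,\e^{\pm\rho\Omega_{\rm range}}$, so the worst case produces a product bound like $\mathrm{poly}(\rho)\e^{\rho(\Omega_{\max}-2\Omega_{\min})}\cdots$, which need not be integrable.

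The first thing I would try is to exploit the sign structure. The two factors with sign $-\kappa$ (the majority sign) give decay at least $\e^{-\rho\min\{|N|,|f|\}}$ each, while the single opposite factor grows at most like $\e^{\rho\max\{|N|,|f|\}}$. Measuring each multiplier by the supremum of its symbol on the annulus (plus derivatives costing powers of $\rho$), I get the bound
\begin{equation}
    \mathrm{poly}(\rho)\,\e^{-\rho(2\min\{|N|,|f|\}-\max\{|N|,|f|\})}=\mathrm{poly}(\rho)\,\e^{-\delta_*\rho}.
\end{equation}
This is integrable in $\rho$ precisely in the regime \eqref{eq:normal_form_regime}. The all-same-sign case is even better. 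In the case $\nu=\mu$ with the growing factor on the output, the growth sits on $\cA^0$ and is estimated as an operator on $\dot B^k_{1,1}$. To make the sup-of-symbol bound legitimate, I would apply the Mikhlin/Bernstein bound dyadically: on each Littlewood--Paley block the kernel of $\e^{c\rho\Omega(D)}\dot\Delta_j$ has $L^1$ norm $\lesssim \langle\rho\rangle^{2}\e^{\rho\sup_{\xi}c\,\Omega(\xi)}$, by integrating by parts twice with the $0$-homogeneity of $\Omega$. Here I would invoke Proposition~\ref{prop:omega-1_bound}-style computations.

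With the $\rho$-integral under control, it remains to estimate, for fixed $\rho$, the product. By the $\dot B^k_{1,1}$ product estimate Proposition~\ref{prop:product_estim_Besov} (or simply Leibniz plus Cauchy--Schwarz, since $\dot B^k_{1,1}$ of a product of two $L^2$-based functions is controlled by $H^{k+\epsilon}$-type norms of the factors), we get
\begin{equation}
    \norm{G H}_{\dot B^k_{1,1}}\lesssim \norm{G}_{H^{s_0-1}}\norm{H}_{H^{s_0-1}}, \qquad s_0-1>k,
\end{equation}
with $G=\e^{-\kappa\nu\rho\Omega(D)}\cA^{1,j}g$ and $H=\e^{-\kappa\mu\rho\Omega(D)}\cA^{2,j}h$. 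Since $\cA^{2,j}$ has order one, $\norm{H}_{H^{s_0-1}}\lesssim\norm{h}_{H^{s_0}}$, which accounts for the extra derivative. Care is needed at low frequencies, because homogeneous multipliers of order one on $L^2$ are fine there while order-zero ones are bounded. Combining these bounds and integrating $\mathrm{poly}(\rho)\e^{-\delta_*\rho}$ over $(0,\infty)$ gives \eqref{eq:estimate_pseudo_product}, with a constant depending on $N,f$ through $\delta_*^{-1}$ and the bounds on $\Omega$.
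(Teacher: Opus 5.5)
Your proposal is correct and follows essentially the same route as the paper: the Laplace representation of $1/\Phi_{\sigma,\nu\mu}$ factorizes the phase, and the output multiplier is controlled by a dyadic $L^1$ kernel bound $\lesssim(1+\rho)^2\sup_\xi \e^{\kappa\sigma\Omega(\xi)\rho}$. The two input multipliers are controlled by the supremum of their symbols in $H^{s_0-1}$ together with Proposition~\ref{prop:product_estim_Besov}, and the majority-sign count gives $\e^{-\delta_*\rho}$; the paper only packages this as Lemma~\ref{lemma:T} for $\rmT_{\sigma,\nu\mu}$ and then absorbs the multipliers $\cA$. One minor point: your ``main obstacle'' paragraph contradicts itself, since the worst-case product $\e^{\rho(\max\{|N|,|f|\}-2\min\{|N|,|f|\})}$ is exactly $\e^{-\delta_*\rho}$ and hence integrable, as you show afterwards.
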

Thanks to Lemma~\ref{lemma:estimate_pseudo_product}, whose proof is deferred to Section~\ref{sec:proof_lemmaT}, we are now ready to prove Proposition~\ref{prop:bootstrap} with $T_\eps\sim_{N,f}\varepsilon^{-2}$.
\begin{proof}[Proof of Proposition~\ref{prop:bootstrap}]
    Since the nonlinear term $J(a,b)$ on the right-hand side of \eqref{eq:D_bound_general} can be handled as in Lemma~\ref{lemma:control_of_J}, we focus on the oscillating variables $z_\pm$.
    Recalling that $\dot B^{k}_{1,1}\hookrightarrow\dot B^{k-2}_{\infty,1}$ for any $k\in\NN\setminus\{0\}$, from \eqref{eq:estimate_Besov_thm2} and Lemma~\ref{lemma:estimate_pseudo_product} we deduce that for any $s>5$ we have
    \begin{align}
    \begin{split}
        \|z_\sigma(t)\|_{\dot B^1_{\infty,1}}&\lesssim_{N,f}\langle t\rangle^{-\frac12}\left(\norm{z_\sigma(0)}_{\dot B^3_{1,1}}+\sum_{\nu,\mu\in\{\pm\}}\|z_\nu(0)\|_{H^{s-1}}\|z_\mu(0)\|_{H^{s-1}}\right)\\
        &\quad+\sum_{\nu,\mu\in\{\pm\}}\|z_\nu(t)\|_{H^{s-1}}\|z_\mu(t)\|_{H^{s-1}}+\int_0^t\langle t-\tau\rangle^{-\frac12} \norm{\cE_\sigma(\tau)}_{\dot B^3_{1,1}}\dd \tau\\
        &\quad+2\sum_{\nu,\mu\in\{\pm\}}\int_0^t\langle t-\tau\rangle^{-\frac12} \|\mathcal{NL}_\nu(\tau)\|_{H^{s-1}}\|z_\mu(\tau)\|_{H^{s-1}}\dd\tau.
        \label{eq:normal_form_step2}
    \end{split}
    \end{align}
    Arguing as in \eqref{eq:control_quadratic_nonlinearity}, we bound the nonlinearity as
    \begin{align}
    \begin{aligned}
        \norm{\cN\cL_\nu}_{H^{s-1}}&\lesssim_{N,f} \norm{U\omega}_{H^{s-1}}+\norm{Ua}_{H^{s}}+\norm{Ub}_{H^s}\\
        &\lesssim_{N,f} \norm{U}_{H^{s}}(\norm{\omega}_{H^{s-1}}+\norm{a}_{H^{s}}+\norm{b}_{H^{s}})\\
        &\lesssim_{N,f} \eps^2,
        \label{eq:normal_form_step3}
    \end{aligned}
    \end{align}
    and the error term as
    \begin{align}
    \begin{aligned}
        \norm{\cE_\sigma}_{\dot B^3_{1,1}}&\lesssim_{N,f}\sum_{j\in\{1,3\}}\norm{U\de_j\Omega^{-2}(D)|\nabla|^{-2}J(a,b)}_{\dot B^4_{1,1}}\\
        &\lesssim_{N,f}\norm{U}_{H^{s-1}}\norm{|\nabla|^{-1}J(a,b)}_{H^{s-1}}\\
        &\lesssim_{N,f}\varepsilon^3,
        \label{eq:normal_form_step4}
    \end{aligned}
    \end{align} 
    where we have used the commutator estimate in \cite[Corollary 5.4, Eq (5.8)]{Li2019} to bound
    \begin{align}
        \norm{|\nabla|^{-1}J(a,b)}_{H^{s-1}}=\|[\cJ_{s-1}\de_3|\nabla|^{-1},a]\de_1b-[\cJ_{s-1}\de_1|\nabla|^{-1},a]\de_3b\|_{L^2}\lesssim\|a\|_{H^{s-1}}\|b\|_{H^{s-1}}.
    \end{align}
    Putting together \eqref{eq:normal_form_step3} and \eqref{eq:normal_form_step4}, and using the assumption on the initial datum \eqref{eq:thm1_assumption_ic_0}, from \eqref{eq:normal_form_step2} we infer
    \begin{equation}
        \|z_\sigma(t)\|_{\dot B^1_{\infty,1}}\lesssim_{N,f}\langle t\rangle^{-\frac12}\varepsilon+\eps^2+\langle t\rangle^{\frac12}\varepsilon^3.
    \end{equation}
    This last estimate improves \eqref{eq:estimate_to_be_improved_z_sigma} in Section~\ref{sec:proofof43}, and yields, for $\eps$ small enough,
    \begin{equation}
        \int_0^{T_\eps}\left(\|z_-(t)\|_{\dot B^1_{\infty,1}}+\|z_+(t)\|_{\dot B^1_{\infty,1}}\right)\dd t\leq \frac{1}{4c}\log\left(\frac32\right),
    \end{equation}
    where $c$ is the constant appearing in \eqref{eq:blow-up_Uab}.
    This fact, together with Lemma~\ref{lemma:control_of_J} and the well-preparedness of the initial datum, implies \eqref{eq:proof_of_bootstrap_bu}.
    An application of the blow-up criterion in Proposition~\ref{prop:blow_up_criterion} then concludes the proof.
\end{proof}
\begin{remark}
    Under the \eqref{ass:gamma_wp} assumption, the error term $\cE$ features an additional contribution coming from the nonzero potential vorticity.
    However, in view of the discussion in Section~\ref{subsec:PV}, such additional term can be bounded arguing as in Lemma~\ref{lemma:control_of_J}, giving 
    \begin{equation}
        \norm{\cE^q_\sigma}_{\dot B^3_{1,1}}\lesssim_{N,f}\norm{U}_{H^s}\norm{q}_{H^{s-1}}\lesssim \eps^{1+\gamma}.
    \end{equation}
    Up to time $T_\eps\sim_{N,f}\tilde c_0\eps^{-2}$, its integrated dispersive contribution is proportional to $\tilde c_0^{3/2}\eps^{\gamma-2}$, which is small for $\gamma\geq2$.
\end{remark}

\subsection{Proof of Lemma~\ref{lemma:estimate_pseudo_product}}\label{sec:proof_lemmaT}

In view of the factorized structure \eqref{eq:definition_m_symbols} of the symbols $m_{\sigma,\nu\mu}$ and by linearity, the problem reduces to the study of the simpler, time-independent bilinear operators $\rmT_{\sigma,\nu\mu}$ defined as
\begin{equation}
    \rmT_{\sigma,\nu\mu}(g,h)(x)=\cF^{-1}\left(\int_{\R^2}\frac{\hat g(\xi-\eta)\hat h(\eta)}{\Phi_{\sigma,\nu\mu}(\xi,\eta)}\dd \eta\right)(x),\qquad \sigma,\nu,\mu\in\{\pm\},\quad g,h\in\cC^\infty_c(\R^2).
\end{equation}
Indeed, the pseudo-product operators $\cB_{\sigma,\nu\mu}$ can be expressed as
\begin{equation}\label{eq:B_intermsof_T}
        \cB_{\sigma,\nu\mu}(g,h)=-\frac{\I}{4\pi^2}\sum_{j=1}^\ell\cA^{0,j}_{\sigma,\nu\mu}(D)\rmT_{\sigma,\nu\mu}(\cA^{1,j}_{\sigma,\nu\mu}(D) g,\cA^{2,j}_{\sigma,\nu\mu}(D) h).
\end{equation}
The operators $\rmT_{\sigma,\nu\mu}$ satisfy the following bound.
\begin{lemma} \label{lemma:T}
    Fix $k\in\NN_0$ and let $s_0>k$.
    There exists a constant $C=C(\delta_*)>0$ such that, for all $g,h\in H^{s_0}$,
    \begin{equation}
        \norm{\rmT_{\sigma,\nu\mu}(g,h)}_{\dot B^k_{1,1}}\leq C\norm{g}_{H^{s_0}}\norm{h}_{H^{s_0}}.
    \end{equation}
\end{lemma}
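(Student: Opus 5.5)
Using the Laplace-type representation \eqref{eq:integral_Laplace}, I would write, with $\kappa=\sign\Phi_{\sigma,\nu\mu}$ (constant over $(\xi,\eta)$ by Lemma~\ref{lemma:alg_Phi}),
\begin{equation}
    \frac{1}{\Phi_{\sigma,\nu\mu}(\xi,\eta)}=\kappa\int_0^\infty \e^{\kappa\sigma\Omega(\xi)\rho}\e^{-\kappa\nu\Omega(\xi-\eta)\rho}\e^{-\kappa\mu\Omega(\eta)\rho}\dd\rho,
\end{equation}
which converges absolutely since $\kappa\Phi_{\sigma,\nu\mu}\ge\delta_*>0$. Inserting this into the definition of $\rmT_{\sigma,\nu\mu}$ and using Fubini, which is justified for $g,h\in\cC^\infty_c$ by the exponential decay $\e^{-\delta_*\rho}$ of the integrand, gives
\begin{equation}
    \rmT_{\sigma,\nu\mu}(g,h)=\kappa\int_0^\infty \e^{\kappa\sigma\rho\Omega(D)}\Big(\big(\e^{-\kappa\nu\rho\Omega(D)}g\big)\big(\e^{-\kappa\mu\rho\Omega(D)}h\big)\Big)\dd\rho .
\end{equation}
Here the $\rho$ factors are real exponentials of $\rho\Omega(D)$, not unitary groups, and individually they can grow like $\e^{\rho\max\{|N|,|f|\}}$. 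The product is nonetheless bounded by $\e^{-\delta_*\rho}$ on the Fourier side, because the three exponents sum to $-\kappa\Phi\rho\le-\delta_*\rho$.

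The step I expect to be the main obstacle is bounding the three operators separately despite this. Splitting the norm naively across the three multipliers loses the cancellation and gives growth like $\e^{(3\max-\dots)\rho}$. I would instead estimate in Fourier space directly, avoiding operator norms of the factors. Let $\dot\Delta_j$ be the Littlewood--Paley pieces with symbol $\varphi_j$. Then
\begin{equation}
\norm{\dot\Delta_j\rmT(g,h)}_{L^1}\lesssim 2^{j}\norm{\langle x\rangle\dot\Delta_j\rmT(g,h)}_{L^2}\dots
\end{equation}
A cleaner route is to bound $\norm{\dot\Delta_j F}_{L^1}\lesssim \norm{\widehat{\dot\Delta_jF}}_{L^2}+2^{-2j}\norm{\nabla_\xi^2\widehat{\dot\Delta_jF}}_{L^2}$, up to dyadic scaling factors (the standard weighted-$L^2$ bound for $L^1$ in two dimensions), with $\hat F(\xi)=\int \hat g(\xi-\eta)\hat h(\eta)\Phi^{-1}(\xi,\eta)\dd\eta$. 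The symbol $\Phi^{-1}$ is smooth away from $\xi=0$, $\xi-\eta=0$ and $\eta=0$. It is bounded by $\delta_*^{-1}$, and its derivatives obey homogeneous bounds $|\nabla^\alpha\Omega(\zeta)|\lesssim|\zeta|^{-|\alpha|}$ coming from the $0$-homogeneity of $\Omega$. The Laplace representation then serves only to show that these bounds hold uniformly, with constants depending on $\delta_*$.

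With that in hand, the bilinear estimate follows a standard paraproduct split of $(g,h)$ into high-low, low-high and high-high interactions. In each piece the frequency derivatives of $\Phi^{-1}$ cost at most factors of the inverse dyadic scale. These are absorbed by weights: the homogeneous Besov sum is localized in $\xi$, and for $k\ge0$ a summable factor $2^{jk}$ is controlled by $\langle\xi\rangle^{s_0}$ distributed onto $g$ and $h$, using $s_0>k$ together with Cauchy--Schwarz in $\eta$ (Young's inequality $L^2*L^2\to L^\infty$ for the low frequencies). Low frequencies $j\to-\infty$ would be handled by $\norm{\dot\Delta_jF}_{L^1}\lesssim$ the $L^1$ norm of the product structure, namely $\norm{g}_{L^2}\norm{h}_{L^2}$, uniformly. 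Summability in $j\le0$ of the $2^{jk}$ factor is immediate when $k\ge1$; the case $k=0$ needs an extra small gain, which I would take from the derivative bounds of the symbol. Collecting the pieces should give $\norm{\rmT(g,h)}_{\dot B^k_{1,1}}\le C(\delta_*)\norm{g}_{H^{s_0}}\norm{h}_{H^{s_0}}$, and density extends the bound from $\cC^\infty_c$ to $H^{s_0}$.
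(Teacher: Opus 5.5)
Your Laplace representation and the resulting formula for $\rmT_{\sigma,\nu\mu}(g,h)$ as a $\rho$-integral of $\e^{\kappa\sigma\rho\Omega(D)}\big[(\e^{-\kappa\nu\rho\Omega(D)}g)(\e^{-\kappa\mu\rho\Omega(D)}h)\big]$ are exactly the paper's starting point. The obstacle you then identify is not real, and the gap opens when you abandon this formula. Since $\kappa$ is the majority sign of $(-\sigma,\nu,\mu)$, at least two of $\kappa\sigma,-\kappa\nu,-\kappa\mu$ equal $-1$. So at most one of the three real exponentials has a positive exponent, bounded by $\max\{|N|,|f|\}\rho$, while the other two are bounded by $\e^{-\min\{|N|,|f|\}\rho}$. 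The product of the three \emph{separate} sup norms is therefore at most $\e^{-\delta_*\rho}$; this is exactly how $\delta_*$ arises in Lemma~\ref{lemma:alg_Phi}. The paper does split naively. It applies Young's inequality on each dyadic block to the outer factor, with $\norm{\mathcal F^{-1}(\varphi\,\e^{\kappa\sigma\Omega\rho})}_{L^1}\lesssim(1+\rho)^2\sup|\e^{\kappa\sigma\Omega\rho}|$ (weighted $L^2$ and Plancherel, applied to this fixed smooth symbol). It then uses Proposition~\ref{prop:product_estim_Besov} to pass to $\norm{\cdot}_{H^{s_0}}\norm{\cdot}_{H^{s_0}}$, and Plancherel for the two inner multipliers. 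What remains is $\int_0^\infty(1+\rho)^2\e^{-\delta_*\rho}\dd\rho<\infty$.

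Your replacement route fails at its central step. Applying the weighted-$L^2$ bound for $L^1$ to $\dot\Delta_j\rmT(g,h)$ itself requires $\nabla_\xi^2$ of $\xi\mapsto\int\hat g(\xi-\eta)\hat h(\eta)\Phi^{-1}(\xi,\eta)\dd\eta$. These derivatives also land on $\hat g$ (or, after integrating by parts in $\eta$, on $\hat h$), so they need spatial moments of $g$ or $h$, which $H^{s_0}$ does not control. Concretely, translating $g$ and $h$ by the same large vector translates $\rmT(g,h)$, leaves both sides of the lemma unchanged, and makes the weighted norm arbitrarily large. The method therefore already fails for the plain product $gh$, and no bound on derivatives of $\Phi^{-1}$ repairs it. Those bounds are lossy anyway: in low--high interactions, with $g$ at frequency $2^{j_1}\ll2^j$, a $\xi$-derivative of $\Omega(\xi-\eta)$ costs $2^{j-j_1}$ relative to the output scale, and Sobolev weights do not absorb this. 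Likewise, bounding the low frequencies by $\norm{g}_{L^2}\norm{h}_{L^2}$ presupposes an $L^2\times L^2\to L^1$ estimate for a non-product bilinear operator. That is precisely what the factorization in $\rho$ supplies; an $L^1$ bound on the dyadically localized bilinear kernel, in the Coifman--Meyer spirit, would be the alternative.

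Finally, there is no extra gain for $k=0$, and the estimate is false there. Take a nonzero Schwartz $g$ and $h=\bar g$. For each unit vector $\vartheta$, as $r\to0^+$ one has $\widehat{\rmT(g,h)}(r\vartheta)\to\int|\hat g(-\eta)|^2\big(-\sigma\Omega(\vartheta)+(\nu+\mu)\Omega(\eta)\big)^{-1}\dd\eta\neq0$, because the denominator has the fixed sign $\kappa$. Hence $\norm{\dot\Delta_j\rmT(g,h)}_{L^1}\ge\norm{\varphi(2^{-j}\cdot)\widehat{\rmT(g,h)}}_{L^\infty}$ does not decay as $j\to-\infty$, and the $\dot B^0_{1,1}$ norm is infinite. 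The lemma holds only for $k\ge1$. This is also all that the paper's proof covers (through Proposition~\ref{prop:product_estim_Besov}) and all that Lemma~\ref{lemma:estimate_pseudo_product} uses.
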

\begin{proof}
    For any choice of $\sigma, \nu,\mu\in\{\pm\}$, let $\kappa$ be defined as in \eqref{eq:def_of_kappa}.
    Observe that, in view of Lemma~\ref{lemma:alg_Phi}, we have the representation
    \begin{equation}\label{eq:LaplacePhi}
        \frac{1}{\Phi_{\sigma,\nu\mu}(\xi,\eta)}=\kappa\int_0^{+\infty}\e^{-\kappa\Phi_{\sigma,\nu\mu}(\xi,\eta)\rho}\dd \rho,
    \end{equation}
    which is well defined since $\kappa \Phi_{\sigma,\nu\mu}(\xi,\eta)\geq\delta_*$.
    Now, \eqref{eq:LaplacePhi} allows us to write
    \begin{align}
        \rmT_{\sigma,\nu\mu}(g,h)&=\cF^{-1}\left(\kappa\int_{\R^2}\int_0^{\infty}\e^{\kappa\sigma\Omega(\xi)\rho}\e^{-\kappa\nu\Omega(\xi-\eta)\rho}\hat g(\xi-\eta)\e^{-\kappa\mu\Omega(\eta)\rho}\hat h(\eta)\dd\rho\dd \eta\right)\\
        &=\kappa\int_0^{+\infty}\e^{\kappa\sigma\Omega(D)\rho}\left[\left(\e^{-\kappa\nu\Omega(D)\rho} g\right)\left(\e^{-\kappa\mu\Omega(D)\rho} h\right)\right]\dd\rho,
    \end{align}
    where the exchange of integrals is justified by Fubini's theorem.
    Therefore, by the Young's inequality for convolutions, we have, for $\varphi\in \cC_c^\infty$ with $\supp \varphi \subset \{|\xi|\in[\frac34,\frac83]\}$,
    \begin{align}
    \begin{split}
        \|\rmT_{\sigma,\nu\mu}(g,h)\|_{\dot B^k_{1,1}}&\le\int_0^{\infty}\norm{\e^{\kappa\sigma\Omega(D)\rho}\left[\left(\e^{-\kappa\nu\Omega(D)\rho} g\right)\left(\e^{-\kappa\mu\Omega(D)\rho} h\right)\right]}_{\dot B^k_{1,1}}\dd\rho\\
        &\lesssim\int_0^{\infty}\norm{\mathcal{F}^{-1}\left(\varphi(\xi)\e^{\kappa\sigma\Omega(\xi)\rho}\right)}_{L^1}\norm{\left(\e^{-\kappa\nu\Omega(D)\rho} g\right)\left(\e^{-\kappa\mu\Omega(D)\rho} h\right)}_{\dot B^k_{1,1}}\dd\rho\\
        &\lesssim\int_0^{\infty}\norm{\mathcal{F}^{-1}\left(\varphi(\xi)\e^{\kappa\sigma\Omega(\xi)\rho}\right)}_{L^1}\norm{\e^{-\kappa\nu\Omega(D)\rho} g}_{H^{s_0}}\norm{\e^{-\kappa\mu\Omega(D)\rho} h}_{H^{s_0}}\dd\rho\\
        &\leq \|g\|_{H^{s_0}}\|h\|_{H^{s_0}} \int_0^\infty A_\sigma(\rho)B_\nu(\rho)B_\mu(\rho)\dd\rho,
        \label{eq:integral_to_bound_m1}
    \end{split}
    \end{align}
    where we used Proposition~\ref{prop:product_estim_Besov} to separate $g$ and $h$, and set
    \begin{equation}
        A_\sigma(\rho):=\norm{\mathcal{F}^{-1}\left(\varphi(\xi)\e^{\kappa\sigma\Omega(\xi)\rho}\right)}_{L^1},\quad B_\nu(\rho):=\norm{\e^{-\kappa\nu\Omega(\cdot)\rho}}_{L^\infty}.
    \end{equation}
    To conclude the proof, observe that the Plancherel identity and the fact that the support of $\varphi$ is compact yield
    \begin{align}
    \begin{split}
        A_\sigma(\rho)&\lesssim\norm{\langle x\rangle^2\mathcal{F}^{-1}\left(\varphi(\xi)\e^{\kappa\sigma\Omega(\xi)\rho}\right)(x)}_{L^2_x}=\norm{\langle\nabla_\xi\rangle^2\varphi(\xi)\e^{\kappa\sigma\Omega(\xi)\rho}}_{L^2_\xi}\\
        &\lesssim B_{-\sigma}(\rho)+\norm{\Delta_\xi\varphi(\xi)\e^{\kappa\sigma\Omega(\xi)\rho}}_{L^\infty_\xi}.
        \label{eq:integral_to_bound_1}
    \end{split}
    \end{align}
    Moreover, a direct computation shows that
    \begin{equation}
        |\Delta_\xi\varphi(\xi)\e^{\kappa\sigma\Omega(\xi)\rho}|\lesssim(1+\rho)^2B_{-\sigma}(\rho).
        \label{eq:integral_to_bound_2}
    \end{equation}
    Therefore, putting together \eqref{eq:integral_to_bound_1} and \eqref{eq:integral_to_bound_2}, we obtain that
    \begin{equation}
        \int_0^\infty A_\sigma(\rho)B_\nu(\rho)B_\mu(\rho)\dd\rho\lesssim\int_0^\infty(1+\rho)^2B_{-\sigma}(\rho)B_\nu(\rho)B_\mu(\rho)\dd\rho.
        \label{eq:integral_to_bound_3}
    \end{equation}
    Finally, observe that
    \begin{equation}
        \e^{-\kappa\nu\Omega(\xi)\rho}\le\begin{cases}
            \e^{\max\{|N|,|f|\}\rho}\quad&\text{if }\kappa\nu=-,\\
            \e^{-\min\{|N|,|f|\}\rho}\quad&\text{if }\kappa\nu=+,
        \end{cases}
    \end{equation}
    which implies
    \begin{equation}
        B_{-\sigma}(\rho)B_\nu(\rho)B_\mu(\rho)\le\e^{(\max\{|N|,|f|\}-2\min\{|N|,|f|\})\rho}=\e^{-\delta_*\rho}.
        \label{eq:integral_to_bound_4}
    \end{equation}
    Inserting \eqref{eq:integral_to_bound_4} into \eqref{eq:integral_to_bound_3} and recalling \eqref{eq:integral_to_bound_m1}, we conclude the proof, since $\delta_*>0$ ensures the integral is finite.
\end{proof}
\begin{proof}[Proof of Lemma~\ref{lemma:estimate_pseudo_product}]
    Combining \eqref{eq:B_intermsof_T} with the triangle inequality, Proposition~\ref{prop:homog_multi_appendix}, Corollary~\ref{coroll:0homo_multi}, and Lemma~\ref{lemma:T} concludes the proof.
\end{proof}

\addtocontents{toc}{\protect\setcounter{tocdepth}{-1}}

\section*{Acknowledgements}
The authors would like to thank Pierre Germain and Klaus Widmayer for insightful discussions during the preparation of this work.
The research of ADZ is supported by the SNSF through the grant PCEFP2 203059.
The research of LM is supported by the EPSRC under grant number EP/Y03533X/1, with additional support from the Department of Mathematics at Imperial College London.
The research of HX is supported by a Roth Scholarship at Imperial College London and the Royal Society Wolfson Fellowship awarded to Pierre Germain.

\section*{AI declaration}
AI tools were used in the preliminary phase of this work to identify papers in the literature related to the problem under consideration.
The representation \eqref{eq:LaplacePhi} was suggested by an AI tool, which ADZ was using to further understand the normal form theory and its application to the problem.
The authors then used this identity to develop the full argument presented in Lemma~\ref{lemma:T}.
Finally, AI was used to improve the language and exposition and to review the paper before finalizing the present version.

\addtocontents{toc}{\protect\setcounter{tocdepth}{1}}

\appendix

\section{Homogeneous Besov spaces}\label{app:appendixBesov}
We collect here the facts about homogeneous Besov spaces that are needed in this work, beginning with their definition.
A more comprehensive discussion can be found, for example, in \cite{BahCheDan2011}.

Let $\varphi\in \cC_c^\infty(\R^d)$ with $\supp\varphi \subset \{|\xi|\in[\frac34,\frac83]\}\subset\R^2$ such that the following partition of unity holds for all $\xi\neq0$
\begin{equation}
    \sum_{j\in\ZZ}\varphi(2^{-j}\xi)=1.
\end{equation}    
The homogeneous Littlewood--Paley decomposition of a distribution $u$ is then defined as
\begin{equation}
    u(x)=\sum_{j\in\ZZ}\dot \Delta_j u(x),
\end{equation}
with dyadic blocks
\begin{equation}
    \dot\Delta_j u(x)=\varphi(2^{-j}D)u(x)=\cF^{-1}(\varphi(2^{-j}\xi)\hat u(\xi))(x)=(\cF^{-1}(\varphi(2^{-j}\xi))*u)(x).
\end{equation}
\vspace{-16pt}
\begin{definition}[Homogeneous Besov spaces]
    Let $s\in\R$ and $p,q\in[1,\infty]$. We define $\dot B^s_{p,q}$ as the space of those (homogeneous) tempered distributions $u$ such that 
    \begin{equation}
        \norm{u}_{\dot B^s_{p,q}}:=\left(\sum_{j\in\ZZ}2^{jsq}\norm{\dot\Delta_ju}_{L^p}^q\right)^\frac1{q}<\infty.
    \end{equation}
    In the above, $s$ is the \textit{regularity} index, $p$ is the \textit{integrability} index, and $q$ is the \textit{summability} index.
\end{definition}
Homogeneous Besov spaces are the natural setting for studying the boundedness of operators associated with homogeneous Fourier symbols.
\begin{proposition}[Homogeneous multipliers {\cite[Proposition 2.30]{BahCheDan2011}}]\label{prop:homog_multi_appendix}
    Let $\cA$ be a smooth function, homogeneous of degree $k$, on $\R^d\setminus\{0\}$. 
    Then, if $(s_1,p_1,q_1)\in\R\times[1,\infty]^2$ is such that
    \begin{equation}
        s_1-k<\frac{d}{p_1}\quad \text{or}\quad s_1-k=\frac{d}{p_1} \text{ and } q_1=1,
        \label{eq:s1p1q1}
    \end{equation}
    the operator $\cA(D):\dot B^{s_1}_{p_1,q_1}\cap \dot B^{s_2}_{p_2,q_2}\longrightarrow \dot B^{s_1-k}_{p_1,q_1}\cap \dot B^{s_2-k}_{p_2,q_2}$ is continuous for all $(s_2,p_2,q_2)\in\R\times[1,\infty]^2$. 
\end{proposition}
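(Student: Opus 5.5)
We would prove this proposition by the standard Littlewood--Paley argument from \cite{BahCheDan2011}: first show that $\cA(D)$ maps each dyadic block correctly, then sum. The key point is that $\cA$ is homogeneous of degree $k$, so $\cA(D)\dot\Delta_j = 2^{jk}\,(\cA\tilde\varphi)(2^{-j}D)\dot\Delta_j$. Here $\tilde\varphi$ is a smooth cutoff equal to $1$ on the annulus $\{|\xi|\in[\frac34,\frac83]\}$ and supported in a slightly larger annulus.

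Since $\cA$ is smooth away from the origin, the function $\cA\tilde\varphi$ is in $\cC_c^\infty(\R^d)$. Hence $h:=\cF^{-1}(\cA\tilde\varphi)\in L^1$. The kernel of $(\cA\tilde\varphi)(2^{-j}D)$ is $2^{jd}h(2^j\cdot)$, whose $L^1$ norm equals $\|h\|_{L^1}$ independently of $j$. Young's inequality then gives, for every $p\in[1,\infty]$,
\begin{equation}
\|\dot\Delta_j\cA(D)u\|_{L^p}\le \|h\|_{L^1}\,2^{jk}\|\dot\Delta_j u\|_{L^p}.
\end{equation}
Multiplying by $2^{j(s-k)}$ and taking the $\ell^q$ norm in $j$ yields the norm bound $\|\cA(D)u\|_{\dot B^{s-k}_{p,q}}\lesssim\|u\|_{\dot B^{s}_{p,q}}$. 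This holds for both index triples $(s_1,p_1,q_1)$ and $(s_2,p_2,q_2)$ simultaneously, with no restriction on the indices.

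The main obstacle is not the norm bound but checking that $\cA(D)u$ is well defined and is an element of the homogeneous space. One needs $\cA(D)u=\sum_j\cA(D)\dot\Delta_j u$ to converge in $\mathcal S'$ to a distribution $v$ with $\|\dot\Delta_j v\|_{L^\infty}\to0$ as $j\to-\infty$; this is the realization issue for homogeneous Besov spaces, since $\dot B^s_{p,q}$ is a space modulo polynomials unless its indices are restricted. Condition \eqref{eq:s1p1q1} is exactly what makes $\dot B^{s_1-k}_{p_1,q_1}$ a space of genuine tempered distributions.

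To verify convergence, I would use Bernstein's inequality on the low frequencies, which gives
\begin{equation}
\|\dot\Delta_j\cA(D)u\|_{L^\infty}\lesssim 2^{j(k+d/p_1)}\|\dot\Delta_j u\|_{L^{p_1}} = 2^{j(d/p_1-(s_1-k))}\,2^{js_1}\|\dot\Delta_j u\|_{L^{p_1}}.
\end{equation}
If $s_1-k<d/p_1$, the right-hand side is summable over $j\le0$. If $s_1-k=d/p_1$ and $q_1=1$, the factor is $1$ and summability follows from $q_1=1$. So the low-frequency part of the series converges in $L^\infty$. The high-frequency part converges in $\mathcal S'$ by the usual argument: each term is bounded in a negative-order Sobolev-type norm with a geometric factor.

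Together these show that $\cA(D)u\in\mathcal S'_h$ lies in $\dot B^{s_1-k}_{p_1,q_1}$, and that its blocks are the $\cA(D)\dot\Delta_j u$. The dyadic estimate above then also places $\cA(D)u$ in $\dot B^{s_2-k}_{p_2,q_2}$. Linearity together with these bounds gives continuity on the intersection.
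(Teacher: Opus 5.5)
Your proposal is correct and follows the same route as the source: the paper gives no proof of its own and quotes \cite[Proposition 2.30]{BahCheDan2011}, whose proof likewise combines the dyadic bound $\|\cA(D)\dot\Delta_j u\|_{L^p}\lesssim 2^{jk}\|\dot\Delta_j u\|_{L^p}$ with the realization result \cite[Lemma 2.23]{BahCheDan2011}, which you reprove inline (Bernstein at low frequencies, convergence in $\mathcal S'$ at high frequencies). One small imprecision: membership in $\mathcal S'_h$ means $\|\chi(2^{-j}D)v\|_{L^\infty}\to0$ as $j\to-\infty$ for smooth compactly supported low-frequency cut-offs $\chi$, not $\|\dot\Delta_j v\|_{L^\infty}\to0$ (every polynomial satisfies the latter), but the correct condition follows at once from the absolute $L^\infty$ convergence of the low-frequency part that you establish.
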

Observe that if the triple $(s_1,p_1,q_1)$ satisfies \eqref{eq:s1p1q1}, then $\cA(D)u$ is a well-defined tempered distribution (see \cite[Lemma 2.23 and Remarks 2.24, 2.26]{BahCheDan2011}).
Once this is established, for instance setting $s_1=0$ and $p_1=q_1=2$ for $k=0$, we are free to choose the second triple $(s_2,p_2,q_2)\in\R\times[1,\infty]^2$.
This yields the following corollary.
\begin{corollary}[$0$-homogeneous multipliers]\label{coroll:0homo_multi}
    Let $\cA$ be a smooth $0$-homogeneous function on $\R^d\setminus\{0\}$. 
    Then, for $(s,p,q)\in\R\times[1,\infty]^2$, the operator $\cA(D):L^2\cap\dot B^s_{p,q}\to L^2\cap\dot B^{s}_{p,q}$ is continuous with norm bounded by 
    \begin{equation}\label{eq:general_bound_0homo_op}
        \norm{\cA(D)}_{\dot B^s_{p,q}\to \dot B^{s}_{p,q}}\lesssim \norm{\cF^{-1}(\varphi(\xi)\cA(\xi))}_{L^1}.
    \end{equation}
\end{corollary}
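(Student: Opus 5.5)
The statement to prove is Corollary~\ref{coroll:0homo_multi}. I would argue blockwise, directly from the definition of the homogeneous Besov norm, and use Proposition~\ref{prop:homog_multi_appendix} only for well-definedness.

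\textbf{Step 1: well-definedness.} Apply Proposition~\ref{prop:homog_multi_appendix} with $k=0$ and first triple $(s_1,p_1,q_1)=(0,2,2)$. The condition $s_1-k=0<d/2$ holds, and $\dot B^0_{2,2}=L^2$, so $\cA(D)u$ is a well-defined tempered distribution for $u\in L^2\cap\dot B^s_{p,q}$. The same proposition, applied with second triple $(s,p,q)$, also gives continuity of $\cA(D)$ on $L^2\cap\dot B^s_{p,q}$. What remains is the explicit bound on the operator norm.

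\textbf{Step 2: localize each block.} Let $\tilde\varphi$ be a fattened cutoff, equal to $1$ on $\supp\varphi$ and supported in a slightly larger annulus. Then
\begin{equation}
    \dot\Delta_j\cA(D)u=\big(\tilde\varphi(2^{-j}D)\cA(D)\big)\dot\Delta_j u .
\end{equation}
The operator in brackets is convolution with the kernel $K_j:=\cF^{-1}\big(\tilde\varphi(2^{-j}\xi)\cA(\xi)\big)$. Since $\cA$ is $0$-homogeneous, $\cA(\xi)=\cA(2^{-j}\xi)$, and therefore
\begin{equation}
    K_j(x)=2^{jd}K_0(2^jx), \qquad \|K_j\|_{L^1}=\|K_0\|_{L^1}.
\end{equation}
Young's inequality then gives
\begin{equation}
    \|\dot\Delta_j\cA(D)u\|_{L^p}\le\|K_0\|_{L^1}\|\dot\Delta_ju\|_{L^p}.
\end{equation}
Multiplying by $2^{js}$ and summing in $\ell^q$ yields $\|\cA(D)u\|_{\dot B^s_{p,q}}\le\|K_0\|_{L^1}\|u\|_{\dot B^s_{p,q}}$.

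\textbf{Step 3: control $\|K_0\|_{L^1}$ by the quantity in the statement.} Because $\sum_j\varphi(2^{-j}\cdot)=1$, near the annulus $\tilde\varphi$ equals the finite sum $\sum_{|i|\le 2}\varphi(2^{-i}\xi)\tilde\varphi(\xi)$. Hence $K_0$ is a finite sum of terms of the form
\begin{equation}
    \cF^{-1}\tilde\varphi*\cF^{-1}\big(\varphi(2^{-i}\cdot)\cA\big).
\end{equation}
By homogeneity, each factor $\cF^{-1}\big(\varphi(2^{-i}\cdot)\cA\big)$ is an $L^1$-preserving rescaling of $\cF^{-1}(\varphi\cA)$. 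Young's inequality therefore gives $\|K_0\|_{L^1}\lesssim\|\cF^{-1}(\varphi\cA)\|_{L^1}$, which is the bound in the statement.

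The finiteness of this quantity follows because $\varphi\cA$ is smooth and compactly supported. Integrating by parts twice in $\xi$ gives $\langle x\rangle^{2}$-decay of the kernel, which is integrable enough in $d=2$; in general one takes $d+1$ derivatives.

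\textbf{Main obstacle.} The only subtle point is Step 1: making sure $\cA(D)u$ is a genuine tempered distribution, so that the blockwise identity in Step 2 reconstructs it. That is handled by the $L^2$ intersection together with Proposition~\ref{prop:homog_multi_appendix}. The rest is routine scaling.
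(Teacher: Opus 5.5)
Your proposal is correct and follows the paper's route. The paper obtains the corollary exactly by applying Proposition~\ref{prop:homog_multi_appendix} with $k=0$, first triple $(0,2,2)$ (so that $\dot B^0_{2,2}=L^2$ gives well-definedness) and arbitrary second triple $(s,p,q)$. Your Steps~2--3 only make explicit the dyadic kernel-rescaling argument, left implicit in the cited result, that produces the specific constant in \eqref{eq:general_bound_0homo_op}.

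One minor slip is in your finiteness remark. Two integrations by parts give only pointwise decay $\langle x\rangle^{-2}$ for $\cF^{-1}(\varphi\cA)$, and this is not integrable on $\R^2$. You need three derivatives, as your ``$d+1$'' comment says, or the weighted Cauchy--Schwarz bound $\norm{g}_{L^1}\lesssim\norm{\langle x\rangle^{2}g}_{L^2}$ used in Proposition~\ref{prop:omega-1_bound}. In any case finiteness is immediate, since $\varphi\cA\in\cC^\infty_c$ has a Schwartz inverse Fourier transform.
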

Applied to the multiplier $\Omega^{-1}$, the above corollary yields the following bound of its norm.
\begin{proposition}\label{prop:omega-1_bound}
    For any $(s,p,q)\in\R\times[1,\infty]^2$, the operator $\Omega^{-1}(D):\dot B^s_{p,q}\to \dot  B^s_{p,q}$ defined via the dispersion relation \eqref{eq:dispersion_relation_intro} is continuous with norm bound
    \begin{equation}\label{eq:Omega-1_bound}
        \norm{\Omega^{-1}(D)}_{\dot B^s_{p,q}\to \dot  B^s_{p,q}}\leq C\frac{\max\{|N|,|f|\}^2}{\min\{|N|,|f|\}^3},
    \end{equation} 
    for some constant $C=C(\varphi)$.
\end{proposition}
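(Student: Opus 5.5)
By Corollary~\ref{coroll:0homo_multi}, the operator norm is controlled by $\norm{\cF^{-1}(\varphi\,\Omega^{-1})}_{L^1}$, where $\varphi$ is the fixed annular cutoff. The plan is to bound this $L^1$ norm with the weighted $L^2$ trick already used in the proof of Lemma~\ref{lemma:T}. In $\R^2$, Cauchy--Schwarz gives
\begin{equation}
\norm{g}_{L^1}\lesssim \norm{\langle x\rangle^{2}g}_{L^2},
\end{equation}
since $\langle x\rangle^{-2}\in L^2(\R^2)$. By Plancherel this becomes
\begin{equation}
\norm{\cF^{-1}(\varphi\Omega^{-1})}_{L^1}\lesssim \norm{\langle\nabla_\xi\rangle^2(\varphi\Omega^{-1})}_{L^2_\xi}\lesssim \sum_{|\alpha|\le 2}\norm{\de_\xi^\alpha(\varphi\Omega^{-1})}_{L^\infty(\supp\varphi)}.
\end{equation}
The last step uses that $\supp\varphi$ is a fixed compact annulus, so its measure is absorbed into $C(\varphi)$. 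By Leibniz, it then suffices to bound $\Omega^{-1}$ and its first and second $\xi$-derivatives uniformly on the annulus $\{|\xi|\in[\frac34,\frac83]\}$.

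To compute these derivatives, write $\Omega^{-1}=P^{-1/2}$ with
\begin{equation}
P(\xi)=\frac{N^2\xi_1^2+f^2\xi_3^2}{|\xi|^2}\in[m^2,M^2],\qquad m:=\min\{|N|,|f|\},\quad M:=\max\{|N|,|f|\}.
\end{equation}
On the annulus, $P$ is $0$-homogeneous. Its first and second derivatives are of size at most $C|N^2-f^2|\le CM^2$, because $P-f^2=(N^2-f^2)\xi_1^2/|\xi|^2$ and derivatives of $\xi_1^2/|\xi|^2$ are bounded on the annulus. The chain rule then gives
\begin{equation}
|\Omega^{-1}|\le m^{-1},\qquad |\nabla\Omega^{-1}|\lesssim P^{-3/2}|\nabla P|\lesssim \frac{M^2}{m^3},
\end{equation}
\begin{equation}
|\nabla^2\Omega^{-1}|\lesssim P^{-5/2}|\nabla P|^2+P^{-3/2}|\nabla^2P|\lesssim \frac{M^4}{m^5}+\frac{M^2}{m^3}.
\end{equation}

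The main obstacle is that the second-derivative bound produces $M^4/m^5$, which is worse than the claimed $M^2/m^3$. To fix this, I will not use the crude bound $|\nabla P|\lesssim M^2$ but instead work in polar coordinates, using $\de_\theta\Omega$ as computed in \eqref{eq:derivatives_omega_theta}. There one has
\begin{equation}
|\de_\theta\Omega|=\frac{|N^2-f^2||\sin\theta\cos\theta|}{\Omega}\le \frac{M^2}{m}\quad\text{and}\quad |\de_\theta\Omega|\le \big||N|-|f|\big|\le M,
\end{equation}
the latter by \eqref{eq:identity_dethetaOmega}. Using the sharper bound $|\de_\theta\Omega|\le M$ together with \eqref{eq:derivatives_omega_theta} for $\de_\theta^2\Omega$ (which is $O(M^2/m)$), the derivatives satisfy
\begin{equation}
\de_\theta\Omega^{-1}=O\big(M/m^2\big),\qquad \de_\theta^2\Omega^{-1}=O\big(M^2/m^3+M^2/m^3\big).
\end{equation}
This yields the claimed $M^2/m^3$. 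The radial derivatives vanish by $0$-homogeneity, and converting polar derivatives to Cartesian ones on the annulus costs only $C(\varphi)$. The remaining $L^\infty$ and first-order terms are dominated by $M^2/m^3$, because $m^{-1}\le M^2/m^3$ and $M/m^2\le M^2/m^3$. This gives \eqref{eq:Omega-1_bound}.
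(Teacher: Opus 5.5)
Your proof is correct and follows essentially the same route as the paper. Both arguments use Corollary~\ref{coroll:0homo_multi}, then the weighted Cauchy--Schwarz/Plancherel bound on $\norm{\cF^{-1}(\varphi\Omega^{-1})}_{L^1}$, then pointwise bounds $|\Omega^{-1}|\le m^{-1}$, $|\nabla\Omega^{-1}|\lesssim M/m^{2}$, $|\nabla^2\Omega^{-1}|\lesssim M^2/m^{3}$ on the annulus, which are exactly \eqref{eq:pointwise_estim_omega-1}. Your polar-coordinate refinement via $|\de_\theta\Omega|\le\big||N|-|f|\big|$ from \eqref{eq:identity_dethetaOmega} justifies these bounds, which the paper states without computation and which the naive chain rule would only give with the worse factor $M^4/m^5$.
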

\begin{proof}
    Recall that 
    \begin{equation}
        \Omega^{-1}(\xi)=\sqrt{\frac{\xi^2_1+\xi^2_3}{N^2\xi^2_1+f^2\xi^2_3}},
    \end{equation}
    from which we obtain, for all $\xi\in\R^2\setminus\{0\}$,
    \begin{equation}\label{eq:pointwise_estim_omega-1}
    \begin{split}
        |\Omega^{-1}(\xi)|&\leq\frac{1}{\min\{|N|,|f|\}},\\
        |\nabla\Omega^{-1}(\xi)|&\lesssim \frac{1}{|\xi|}\frac{\max\{|N|,|f|\}}{\min\{|N|,|f|\}^2},\\
        |\Delta\Omega^{-1}(\xi)|&\lesssim \frac{1}{|\xi|^2}\frac{\max\{|N|,|f|\}^2}{\min\{|N|,|f|\}^3}.      
    \end{split}
    \end{equation}
    Then, equation \eqref{eq:Omega-1_bound} follows from \eqref{eq:general_bound_0homo_op} by combining a weighted Cauchy--Schwarz inequality with Plancherel's identity and \eqref{eq:pointwise_estim_omega-1}. 
    More precisely, we have
    \begin{align}
        \norm{\cF^{-1}(\varphi(\xi)\Omega^{-1}(\xi))}_{L^1}&\lesssim \norm{\langle \nabla_\xi\rangle^2\Omega^{-1}}_{L^2_\xi(\supp\varphi)}\\
        &\lesssim\left(\frac{1}{\min\{|N|,|f|\}}+\frac{\max\{|N|,|f|\}}{\min\{|N|,|f|\}^2}+\frac{\max\{|N|,|f|\}^2}{\min\{|N|,|f|\}^3}\right)\\
        &\lesssim \frac{\max\{|N|,|f|\}^2}{\min\{|N|,|f|\}^3}.
    \end{align}   
\end{proof}
\vspace{-5pt}
We conclude the appendix with a product estimate in $L^1$-based homogeneous Besov spaces.
\begin{proposition}[Product estimate]\label{prop:product_estim_Besov}
    Fix $k\in \NN\setminus\{0\}$.
    For all $g,h\in H^s$ with $s>k$, we have
    \begin{equation} \label{eq:product_estim_Besov}
        \norm{gh}_{\dot B^{k}_{1,1}}\lesssim \norm{g}_{H^s}\norm{h}_{H^s}.
    \end{equation}
\end{proposition}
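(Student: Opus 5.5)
The plan is to decompose the product dyadically and use Bernstein's inequality together with the Cauchy--Schwarz inequality to move from $L^1$-based norms to $L^2$-based ones. By definition,
\begin{equation}
    \norm{gh}_{\dot B^k_{1,1}}=\sum_{j\in\ZZ}2^{jk}\norm{\dot\Delta_j(gh)}_{L^1},
\end{equation}
and I will treat low frequencies ($j\le0$) and high frequencies ($j>0$) separately.

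\textbf{Low frequencies.} Since $\dot\Delta_j$ is a convolution with an $L^1$ kernel whose norm is independent of $j$, Young's inequality and the Cauchy--Schwarz inequality give
\begin{equation}
    \norm{\dot\Delta_j(gh)}_{L^1}\lesssim\norm{gh}_{L^1}\le\norm{g}_{L^2}\norm{h}_{L^2}.
\end{equation}
Because $k\ge1>0$, the sum $\sum_{j\le0}2^{jk}$ converges, so this part is bounded by $\norm{g}_{L^2}\norm{h}_{L^2}$. This is exactly where the assumption $k\ne0$ is used.

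\textbf{High frequencies.} For $j>0$, I write $\dot\Delta_j(gh)=\dot\Delta_j\big(\langle\nabla\rangle^{-m}\langle\nabla\rangle^{m}(gh)\big)$ for some integer $m$. Localizing to the annulus $|\xi|\sim2^j$, the multiplier $\varphi(2^{-j}\xi)\langle\xi\rangle^{-m}$ has kernel with $L^1$ norm $\lesssim2^{-jm}$; this follows from scaling together with the fact that $\langle 2^j\xi\rangle^{-m}2^{jm}$ is smooth with uniformly bounded derivatives on the support of $\varphi$. Hence
\begin{equation}
    2^{jk}\norm{\dot\Delta_j(gh)}_{L^1}\lesssim 2^{j(k-m)}\norm{\langle\nabla\rangle^m(gh)}_{L^1}.
\end{equation}
Expanding $\langle\nabla\rangle^m$ is awkward unless $m$ is even, so I instead use the Leibniz rule on $\de^\alpha(gh)$ for $|\alpha|\le m$. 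Taking $m=\lceil s\rceil$ would exceed the available regularity, so a cleaner route is needed.

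The cleaner route is to choose $m=k+\delta$ with $0<\delta<s-k$ and apply the fractional Leibniz (Kato--Ponce) estimate in $L^1$ via Hölder's inequality:
\begin{equation}
    \norm{\langle\nabla\rangle^{m}(gh)}_{L^1}\lesssim\norm{g}_{H^m}\norm{h}_{L^2}+\norm{g}_{L^2}\norm{h}_{H^m}.
\end{equation}
Since $L^1$ is an endpoint for Kato--Ponce, I will avoid it by a dyadic paraproduct argument. Write
\begin{equation}
    \dot\Delta_j(gh)=\sum_{|j'-j''|\le2}\dot\Delta_j(\dot\Delta_{j'}g\,\dot\Delta_{j''}h)+\sum_{j'\gg j''}+\sum_{j''\gg j'},
\end{equation}
where the only pairs that contribute have $\max(j',j'')\gtrsim j$. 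Each term satisfies $\norm{\dot\Delta_{j'}g\,\dot\Delta_{j''}h}_{L^1}\le\norm{\dot\Delta_{j'}g}_{L^2}\norm{\dot\Delta_{j''}h}_{L^2}$, and for high frequencies $\norm{\dot\Delta_{j'}g}_{L^2}\lesssim2^{-j's}\norm{g}_{H^s}$.

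\textbf{Summation.} In the high--low terms ($j'\sim j$), summing over $j''\le j$ of $\norm{\dot\Delta_{j''}h}_{L^2}$ costs a factor $\lesssim(1+j)$ times $\norm{h}_{L^2}$, plus a low-frequency tail bounded by the $L^2$ norm. This yields $\sum_{j>0}2^{j(k-s)}(1+j)\norm{g}_{H^s}\norm{h}_{L^2}$, which converges because $s>k$. In the high--high terms, with $j'\sim j''\ge j-3$,
\begin{equation}
    \sum_{j>0}2^{jk}\sum_{j'\ge j-3}2^{-2j's}\lesssim\sum_{j>0}2^{j(k-2s)}<\infty.
\end{equation}
The main obstacle is bookkeeping the low-frequency part of the factor in the high--low sums so that no $\log$ divergence appears. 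I would handle it either by bounding $\sum_{j''\le j}\norm{\dot\Delta_{j''}h}_{L^2}$ by $(j+C)\norm{h}_{H^s}$, with the low-frequency part controlled by Cauchy--Schwarz against the geometric weights, or by using $\norm{S_{j}h}_{L^2}\le\norm{h}_{L^2}$ directly. The resulting polynomial factor in $j$ is harmless against $2^{j(k-s)}$, which completes the bound $\norm{gh}_{\dot B^k_{1,1}}\lesssim\norm{g}_{H^s}\norm{h}_{H^s}$.
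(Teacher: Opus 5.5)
Your argument is correct provided you commit to the second of the two options you give at the end. It differs from the paper's in how the high frequencies are handled.

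\textbf{Comparison with the paper.} The paper treats $j\ge0$ by Bernstein, $2^{jk}\|\dot\Delta_j(gh)\|_{L^1}\lesssim 2^{j(k-s)}\||\nabla|^s(gh)\|_{L^1}$. It sums the geometric series using $s>k$, and closes with the Grafakos--Oh fractional Leibniz rule $\||\nabla|^s(gh)\|_{L^1}\lesssim\||\nabla|^sg\|_{L^2}\|h\|_{L^2}+\|g\|_{L^2}\||\nabla|^sh\|_{L^2}$. This is essentially the ``cleaner route'' you abandoned. The $L^2\times L^2\to L^1$ case is covered by their theorem, so your endpoint concern is not an obstruction.

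Your paraproduct version buys self-containedness. It uses only Cauchy--Schwarz on each dyadic piece, the Fourier-support properties of products of Littlewood--Paley blocks, and $\|\dot\Delta_{j'}g\|_{L^2}\lesssim 2^{-j's}\|g\|_{H^s}$ for $j'\ge -C$. It also makes visible that the high--low terms only cost $\|g\|_{H^s}\|h\|_{L^2}$, and that the high--high terms only need $2s>k$. The paper's version is two lines but rests on a nontrivial bilinear estimate.

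\textbf{The caveat: the high--low piece.} Your first option sums $\|\dot\Delta_{j''}h\|_{L^2}$ termwise over all $j''\le j$ and controls the part $j''\le 0$ by Cauchy--Schwarz against geometric weights. This fails. As $j''\to-\infty$ there are no geometric weights, and $H^s$ controls these blocks only in $\ell^2$, not in $\ell^1$.

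For example, let $\hat h(\xi)=\big(|\xi|\log(1/|\xi|)\big)^{-1}$ for $|\xi|<\frac12$ and $\hat h=0$ otherwise. Then $h\in H^s$ for every $s$, yet $\|\dot\Delta_{j''}h\|_{L^2}\sim |j''|^{-1}$ as $j''\to-\infty$. Hence $\sum_{j''\le 0}\|\dot\Delta_{j''}h\|_{L^2}=\infty$.

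You must group the low frequencies \emph{before} applying Cauchy--Schwarz: write $\sum_{j''\le j'-3}\dot\Delta_{j'}g\,\dot\Delta_{j''}h=\dot\Delta_{j'}g\,\dot S_{j'-2}h$. By Plancherel, $\|\dot S_{j'-2}h\|_{L^2}\lesssim\|h\|_{L^2}$. Since only $|j'-j|\le C$ contributes, this gives $\sum_{j>0}2^{j(k-s)}\|g\|_{H^s}\|h\|_{L^2}$, with no factor $(1+j)$ at all.
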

\begin{proof}
    We can split the Besov norm into low and high frequencies contributions as follows
    \begin{equation}
        \norm{gh}_{\dot B^{k}_{1,1}}=\sum_{j=1}^\infty2^{-kj}\norm{\dot\Delta_{-j}(gh)}_{L^1}+\sum_{j=0}^\infty2^{kj}\norm{\dot\Delta_{j}(gh)}_{L^1}=:I_1+I_2.
        \label{eq:prod_estimate_step0}
    \end{equation}
    The low-frequency part can be bounded by means of Young's convolution inequality and the Cauchy--Schwarz inequality, recalling that $\|\cF^{-1}(\hat\varphi(2^{-j}\xi))\|_{L^1}=\|\cF^{-1}(\hat\varphi(\xi))\|_{L^1}$ for any $j\in\mathbb Z$:
    \begin{equation}
        I_1\le\sum_{j=1}^\infty2^{-kj}\|\cF^{-1}(\hat\varphi(\xi))\|_{L^1}\norm{g}_{L^2}\norm{h}_{L^2}\lesssim\norm{g}_{H^s}\norm{h}_{H^s}.
        \label{eq:prod_estimate_step1}
    \end{equation}
    For the high-frequency part, Bernstein's inequality allows us to argue as in \eqref{eq:prod_estimate_step1}:
    \begin{equation}
        I_2\lesssim\sum_{j=0}^\infty2^{j(k-s)}\norm{\dot\Delta_{j}|\nabla|^s(gh)}_{L^1}\lesssim\norm{|\nabla|^s(gh)}_{L^1}.
        \label{eq:prod_estimate_step2}
    \end{equation}
    Now, \cite[Theorem 1]{GraOh2014} gives the fractional Leibniz rule
    \begin{equation}
        \norm{|\nabla|^s(gh)}_{L^1}\lesssim\norm{|\nabla|^sg}_{L^2}\norm{h}_{L^2}+\norm{g}_{L^2}\norm{|\nabla|^sh}_{L^2},
    \end{equation}
    which, together with \eqref{eq:prod_estimate_step2}, gives
    \begin{equation}
        I_2\lesssim\norm{g}_{H^s}\norm{h}_{H^s}.
    \end{equation}
    In view of the decomposition \eqref{eq:prod_estimate_step0}, this concludes the proof.
\end{proof}
\bibliographystyle{abbrvnat}
\bibliography{ref}
\end{document}